\theoremstyle{plain}
\newtheorem{thm}[subsection]{Theorem}
\newtheorem{lem}[subsection]{Lemma}
\newtheorem{cor}[subsection]{Corollary}
\newtheorem{prop}[subsection]{Proposition}
\theoremstyle{definition}
\newtheorem{defn}[subsection]{Definition}
\newtheorem{ex}[subsection]{Example}
\newtheorem{rem}[subsection]{Remark}
\numberwithin{equation}{section}
\newcommand{\thmref}[1]{Theorem~\textup{\ref{#1}}}
\newcommand{\lemref}[1]{Lemma~\textup{\ref{#1}}}
\newcommand{\propref}[1]{Proposition~\textup{\ref{#1}}}
\newcommand{\KK}{\mathcal K}
\newcommand{\OO}{\mathcal O}
\newcommand{\TT}{\mathcal T}
\newcommand{\CC}{\mathcal C}
\newcommand{\DD}{\mathcal D}
\newcommand{\EE}{\mathcal E}
\newcommand{\GG}{\mathcal G}
\newcommand{\HH}{\mathcal H}
\newcommand{\PP}{\mathcal P}
\renewcommand{\AA}{\mathcal A}
\newcommand{\F}{\mathbb F}
\newcommand{\N}{\mathbb N}
\newcommand{\Z}{\mathbb Z}
\newcommand{\variso}{\overset{\simeq}{\longrightarrow}}
\renewcommand{\iff}{\ensuremath{\Leftrightarrow}}
\renewcommand{\bar}{\overline}
\newcommand{\what}{\widehat}
\newcommand{\wilde}{\widetilde}
\newcommand{\inv}{^{-1}}
\newcommand{\<}{\langle}
\renewcommand{\>}{\rangle}
\newcommand{\Chi}{\raisebox{2pt}{\ensuremath{\chi}}}
\renewcommand{\epsilon}{\varepsilon}
\newcommand{\case}{& \text{if }}
\newcommand{\minus}{\setminus}
\newcommand{\act}{\curvearrowright}
\renewcommand{\:}{\colon}
\renewcommand{\subset}{\subseteq}
\renewcommand{\)}{\textup)}
\newcommand{\id}{\text{\textup{id}}}
\DeclareMathOperator{\ad}{Ad}
\DeclareMathOperator{\aut}{Aut}
\DeclareMathOperator*{\spn}{span}
\DeclareMathOperator*{\clspn}{\overline{\spn}}
\newcommand{\arabicnum}{\renewcommand{\labelenumi}{(\arabic{enumi})}}
\newcommand{\midtext}[1]{\quad\text{#1}\quad}
\newcommand{\righttext}[1]{\quad\text{#1 }}
\newcommand{\skpr}{\CC\times_\eta G}
\begin{document}
\title[Skew products left cancellative small categories]{Skew products of finitely aligned left cancellative small categories and Cuntz-Krieger algebras}
\author{Erik B\'edos}
\address{Institute of Mathematics, University of Oslo, PB 1053 Blindern, 0316 Oslo, Norway}
\email{bedos@math.uio.no}
\author{S. Kaliszewski}
\address{School of Mathematical and Statistical Sciences, Arizona State University, Tempe, AZ 85287}
\email{kaliszewski@asu.edu}
\author{John Quigg}
\address{School of Mathematical and Statistical Sciences, Arizona State University, Tempe, AZ 85287}
\email{quigg@asu.edu}
\date{\today}

\subjclass[2000]{46L05, 46L55}
\keywords{left cancellative small categories, group cocycles, skew products, Toeplitz algebras, Cuntz-Krieger algebras, coactions}

\begin{abstract}
Given a group cocycle on a finitely aligned left cancellative small category (LCSC) we investigate the associated skew product category and its Cuntz-Krieger algebra, which we describe as the crossed product of the Cuntz-Krieger algebra of the original category by an induced coaction of the group.  We use our results to study Cuntz-Krieger algebras arising from free actions of groups on finitely aligned LCSCs, and to construct coactions of groups on Exel-Pardo algebras. Finally we discuss the universal group of a small category and connectedness of skew product categories. 
\end{abstract}

\maketitle

\section{Introduction}\label{intro}
Let $\CC$ be a left cancellative small category (abbreviated as LCSC in the sequel). In a recent work \cite{jacklcsc}, generalizing the previous work of many authors (including himself) dealing with  directed graphs, higher ranks graphs, categories of paths, and  left cancellative monoids, Spielberg 
has shown how to construct certain groupoids from $\CC$ and used these to associate a Toeplitz algebra $\TT(\CC)$ and a Cuntz-Krieger algebra $\OO(\CC)$ to $\CC$. 
For an alternative way of constructing these groupoids, see \cite{OPTight}.
When $\CC$ is finitely aligned, $\TT(\CC)$ and $\OO(\CC)$ may be described by generators and relations in a more tractable way than in the general case (see \cite[Theorems 9.7 and 10.15]{jacklcsc}). We will therefore concentrate our attention on the finitely aligned case in the present paper and use these descriptions of $\TT(\CC)$ and $\OO(\CC)$ as their definitions (cp.\ Section~\ref{lcsc}), as we did in \cite{bkqs}. Actually, to avoid a lot of unnecessary duplication, we will only state our results for $\OO(\CC)$.  

Many interesting LCSCs are not only finitely aligned, but even singly aligned (i.e., the intersection of two principal right ideals is either empty or a principal right ideal). For example, a left cancellative singly aligned monoid corresponds to what is called a right LCM semigroup in some recent articles (e.g.~\cite{BLS2, BLS1}).
Also, the category of finite paths in a directed graph $E$ is singly aligned, and so is the Zappa-Sz{\' e}p product category associated with an Exel-Pardo system $(E, G, \varphi)$, cp.\ \cite{bkqs}.  
More generally, a higher rank graph $(\Lambda, d)$, as defined in \cite{kp:kgraph}, is a LCSC $\Lambda$ equipped with a degree functor $d: \Lambda \to \N^k$, $k \in \N$, satisfying a certain factorization property, and $\Lambda$ is then finitely aligned as a LCSC whenever $(\Lambda, d)$ is finitely aligned in the sense of \cite{raesimsyee}, so in particular when $\Lambda$ is row-finite. 

Given a group cocycle $\eta: \CC \to G$, i.e., a functor from a small category $\CC$ into a group $G$, it is folklore that one may form a new small category $\CC\times_\eta G$, called the \emph{skew product category} associated to $\eta$. At least, such a skew product has previously been introduced and studied when $\CC$ is a groupoid \cite{ren:approach} or a higher rank graph \cite{pqr:cover}. It is not difficult to see that $\CC\times_\eta G$ is a finitely aligned LCSC whenever $\CC$ is (cp.\ Lemma~\ref{skew finitely aligned}), and our main result is then that the cocycle $\eta$ induces a (maximal) coaction $\delta$ of $G$ on $\OO(\CC)$ such that $ \OO(\CC\times_\eta G)$ is isomorphic to the crossed product $\OO(\CC) \rtimes_\delta G$, cp.\ Proposition~\ref{is coaction} and Theorem~\ref{coaction crossed product isomorphism}.  Moreover, there is a natural action of $G$ on $\CC\times_\eta G$, hence on $\OO(\CC\times_\eta G)$,  such that the associated full $C^*$-crossed product  $\OO(\CC\times_\eta G) \rtimes G$ is isomorphic to $\OO(\CC) \otimes \KK(\ell^2(G))$ (where  $\KK(\ell^2(G))$ denotes the compact operators on $\ell^2(G)$), cp.\ Corollary~\ref{dcp}. 

These results generalize similar results from \cite{pqr:cover} in the case of higher rank graphs. 
As in \cite{pqr:cover}, one can draw some interesting consequences from these. For example, we show that if a group $G$ acts freely on a  LCSC $\DD$ in such a way that the quotient category $\DD/G$ (which is left cancellative) is finitely aligned, then $\DD$ is finitely aligned too, and there is a coaction $\delta$ of $G$ on $\OO(\DD/G)$ such that $\OO(\DD)$ is isomorphic to $ \OO(\DD/G) \rtimes_\delta G$, while $\OO(\DD)\rtimes G$ is isomorphic to $\OO(\DD/G)  \otimes \KK(\ell^2(G))$, cp.\ Corollary~\ref{D fin align}. In another direction, if $(E, H, \varphi)$ is an Exel-Pardo system \cite{exelpardo, bkqexelpardo, bkqs}, then we show that any map from the orbit space $E/H$ into a group $G$ induces a coaction of $G$ on the Cuntz-Krieger algebra of the associated Zappa-Sz{\' e}p category, cp.\ Proposition~\ref{ep-coaction}.

The last two sections are devoted to some categorical aspects. It is well known that any small category $\CC$ has a fundamental groupoid  $(\GG(\CC), i)$ having a universal property with respect to groupoid cocycles on $\CC$. Less known is the fact that $\CC$ also has a universal  group $(U(\CC), j)$ 
with respect to group cocycles. If $\CC$ is a monoid, then $\GG(\CC)$ is actually a group, so we have $(U(\CC), j)=(\GG(\CC), i)$ in this case. In the general case,  this fact may be deduced from \cite[Proposition~19, p.~65]{higgins}. We first provide an  elementary proof  of this result, 
cp.\ Proposition~\ref{uni group}. Next, working with $\GG(\CC)$, we describe in Corollary~\ref{uni group connected cat} how $U(\CC)$ may be obtained from the fundamental group of $\CC$ when $\CC$ is connected. Finally, we study and characterize connectedness of  skew product categories. Our main tool is the theory of coverings for small categories. For the ease of the reader we have included a review of this topic, based mainly on \cite{pqr:cover}.

\section{Preliminaries}\label{prelims}

Throughout, $G$ will be a discrete group and $C^*(G)$ will denote its full group $C^*$-algebra. 
The notation $A\otimes B$ for $C^*$-algebras $A,B$ will always denote the minimal tensor product, while $M(A)$ will denote the multiplier algebra of $A$.
By a homomorphism between $*$-algebras, we always mean a $*$-homomorphism.

\subsection{Coactions}

We refer to \cite{maximal, enchilada, eq:full, ngdiscrete, discrete} for coactions of groups.
The \emph{comultiplication} on $C^*(G)$ is the homomorphism
\[
\delta_G\:C^*(G)\to C^*(G)\otimes C^*(G)
\]
given by the integrated form of the unitary homomorphism
\[
g\mapsto g\otimes g\:G\to C^*(G)\otimes C^*(G),
\]
where we identify $G$ with its image in the unitary group of $C^*(G)$.

Let $\delta$ be a coaction of $G$ on a $C^*$-algebra $A$,
so that $\delta\:A\to A\otimes C^*(G)$
is an injective homomorphism
satisfying the \emph{coaction identity},
i.e., the diagram
\begin{equation}\label{coaction}
\xymatrix@C+20pt{
A \ar[r]^-\delta \ar[d]_\delta
&A\otimes C^*(G) \ar[d]^{\delta\otimes\id}
\\
A\otimes C^*(G) \ar[r]_-{\id\otimes\delta_G}
&A\otimes C^*(G)\otimes C^*(G)
}
\end{equation}
commutes, and also satisfying the \emph{nondegeneracy condition for coactions}:
\begin{equation}\label{nondegenerate}
\clspn\{\delta(A)(1\otimes C^*(G))\}=A\otimes C^*(G).
\end{equation}
Nondegeneracy for coactions implies nondegeneracy as a homomorphism into the multiplier algebra $M(A\otimes C^*(G))$.
It is an open problem whether an injective nondegenerate homomorphism satisfying the coaction identity is automatically nondegenerate as a coaction,
although this has been proven for \emph{normal} coactions (see below).

If $(A,\delta)$ is a coaction of $G$ and $g\in G$,
the \emph{$g$-spectral subspace} is
\[
A_g=\{a\in A:\delta(a)=a\otimes g\}.
\]
The spectral subspaces are linearly independent, and nondegeneracy of the coaction is equivalent to the property
\[
A=\clspn\{A_g:g\in G\}.
\]
Moreover, we have
\[
A_gA_h\subset A_{gh}\midtext{and}A_g^*=A_{g\inv}
\righttext{for}g,h\in G,
\]
so the disjoint union $\bigsqcup_{g\in G}A_g$ is a Fell bundle over $G$ with projection map $p\: \bigsqcup_{g\in G}A_g \to G$ 
given by
$p(a) = g$ whenever $a\in A_g$.
As is common in the literature, we sometimes say instead that $\AA=\{A_g\}_{g\in G}$ is a Fell bundle over $G$, and we let $C^*(\AA)$ (resp.~$C_r^*(\AA)$) denote the full (resp.~reduced) $C^*$-algebra of $G$. We refer to \cite{exelbook} for a recent exposition of the theory of Fell bundles over discrete groups.

Given a coaction $(A,\delta)$ of $G$,
we give $A$ the structure of a (left) Banach module over the Fourier-Stieltjes algebra $B(G)$,
using slice maps:
\begin{equation}\label{module}
f\cdot a=(\id\otimes f)\circ \delta(a)\righttext{for}f\in B(G),a\in A.
\end{equation}

The following folklore lemma sometimes aids in the verification that a given map is a coaction:
\begin{lem}\label{faithful}
Let $\delta\:A\to A\otimes C^*(G)$ be a nondegenerate homomorphism
satisfying the coaction identity \eqref{coaction}
and the coaction-non\-de\-gen\-er\-acy condition \eqref{nondegenerate}.
Then $\delta$ is injective, and hence is a coaction.
\end{lem}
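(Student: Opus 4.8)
The plan is to exploit the $B(G)$-module structure on $A$ coming from \eqref{module} together with the coaction identity to produce a left inverse for $\delta$, which forces injectivity. First I would observe that since $B(G)$ contains the constant function $1$ (the trivial character, i.e.\ the unit of $B(G)$ viewed as the Fourier--Stieltjes algebra), slicing by $1$ gives a candidate retraction: set $\Phi = (\id\otimes 1)\circ\delta\: A\to A$, where $1\:C^*(G)\to\C$ is the canonical character corresponding to the trivial representation of $G$. The goal is to show $\Phi = \id_A$; once we know this, $\delta(a)=0$ immediately implies $a=\Phi(a)=0$, so $\delta$ is injective, and combined with the hypotheses \eqref{coaction} and \eqref{nondegenerate} this is exactly the assertion that $\delta$ is a coaction.

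Next I would use the coaction identity to pin down $\Phi$. Applying $\id\otimes 1\otimes\id$ to the commuting square \eqref{coaction} and using that $(1\otimes\id)\circ\delta_G = \id$ on $C^*(G)$ (since $1$ is the counit for the comultiplication $\delta_G$), the bottom-then-right composite collapses to $\delta$, while the left-then-top composite becomes $(\Phi\otimes\id)\circ\delta$. Hence $\delta\circ\Phi$-type manipulations show $(\Phi\otimes\id)\circ\delta = \delta$, i.e.\ $\delta(\Phi(a)-a)=0$ in a suitable sense; more precisely one gets $\delta(\Phi(a)) = \delta(a)$ for all $a$, but this only re-proves injectivity circularly, so instead I would argue directly on a spanning set. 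The cleaner route: nondegeneracy \eqref{nondegenerate} lets me write a dense subset of $A\otimes C^*(G)$ as limits of sums $\sum \delta(a_i)(1\otimes c_i)$; applying $\Phi\otimes\id$ and using that $\Phi$ is multiplicative together with $(\Phi\otimes\id)\circ\delta=\delta$ and $\Phi(\text{slice})$ identities, one shows $\Phi\otimes\id$ acts as the identity on this dense set, hence $\Phi\otimes\id=\id$ on $A\otimes C^*(G)$, and then (faithfulness of $\id\otimes 1$ composed appropriately, or simply applying a state in the second variable) yields $\Phi=\id_A$.

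A slightly slicker packaging avoids the density argument: the nondegenerate homomorphism $\delta$ extends to the multiplier algebras, and $(1\otimes\id)\circ\delta_G=\id_{M(C^*(G))}$ means that on the spectral-subspace picture $a\in A_g$ would satisfy $\Phi(a)=1(g)a=a$; since \eqref{nondegenerate} is equivalent to $A=\clspn\{A_g\}$, and $\Phi$ is bounded, $\Phi=\id_A$ on all of $A$. I would present whichever of these is shortest given what the paper has already set up, likely the spectral-subspace version since the excerpt has just recalled that nondegeneracy is equivalent to $A=\clspn\{A_g:g\in G\}$.

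The main obstacle is purely bookkeeping: making rigorous the claim that the slice map $\id\otimes 1$ interacts correctly with the (possibly merely nondegenerate, not a priori injective) homomorphism $\delta$ and its multiplier extension, and that the counit identity $(1\otimes\id)\circ\delta_G=\id$ transfers through the tensor-leg manipulations in \eqref{coaction}. None of this is deep, but one must be careful that every map in sight is nondegenerate so that the multiplier extensions compose as expected; this is standard for coactions (see the references \cite{enchilada, maximal, ngdiscrete}) and is exactly why the statement is called ``folklore''.
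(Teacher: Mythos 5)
Your proposal is correct and, in its final ``spectral-subspace'' packaging, is essentially identical to the paper's proof: the authors also slice by the constant function $1\in B(G)$, note that $1\cdot a_g=(\id\otimes 1)(a_g\otimes g)=a_g$ for $a_g\in A_g$, and conclude $(\id\otimes 1)\circ\delta=\id_A$ by linearity and density using the equivalence of \eqref{nondegenerate} with $A=\clspn\{A_g:g\in G\}$. The extra detours you sketch (via the counit identity and the coaction square) are unnecessary, and the paper simply omits them.
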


\begin{proof}
The hypotheses imply that
the formula
\eqref{module} defines a Banach $B(G)$-module structure on $A$.
Letting $f=1$ be the constant function on $G$ with value 1,
for all $g\in G$ and $a_g\in A_g$
we have
\[
1\cdot a_g=(\id\otimes 1)\circ\delta(a_g)=(\id\otimes 1)(a_g\otimes g)=a_g
\]
because $1(g)=1$ for all $g$,
and by linearity and density we get $(\id\otimes 1)\circ\delta=\id_A$.
\end{proof}

If $B$ is a $C^*$-algebra,
a \emph{covariant representation} of a coaction $(A,\delta)$ in $M(B)$
is a pair $(\pi,\mu)$
of nondegenerate homomorphisms
\[
\xymatrix{
A \ar[r]^-\pi
&M(B)
&c_0(G) \ar[l]_-\mu
}
\]
satisfying the \emph{covariance condition},
i.e., the diagram
\[
\xymatrix@C+60pt{
A \ar[r]^-\delta \ar[d]_\pi
&A\otimes C^*(G) \ar[d]^{\pi\otimes\id}
\\
M(B) \ar[r]_-{\ad (\mu\otimes\id)(w_G)\circ (\cdot\otimes 1)}
&M(B\otimes C^*(G))
}
\]
commutes,
where $w_G$ denotes the unitary element of
\[
M(c_0(G)\otimes C^*(G))=\ell^\infty(G,C^*(G))
\]
given by $w_G(g)=g$ for $g\in G$.

The following  lemma sometimes aids in the verification that a given pair is a covariant representation:
\begin{lem}\label{cov gen lem}
Let $(A,\delta)$ be a coaction of $G$,
let $S\subset G$ and $D_g\subset A_g$ for each $g\in S$,
and let $\pi\:A\to M(B)$ and $\mu\:c_0(G)\to M(B)$ be nondegenerate homomorphisms.
Suppose that $\bigcup_{g\in S}D_g$ generates $A$ as a $C^*$-algebra.
Then $(\pi,\mu)$ is a covariant representation if and only if
\begin{equation}\label{cov gen}
\pi(a_h)\mu(\Chi_g)=\mu(\Chi_{hg})\pi(a_h)\righttext{for all}h\in S,a_h\in D_h,g\in G,
\end{equation}
where $\Chi_g\in c_0(G)$ denotes the characteristic function of $\{g\}$ for $g\in G$.
\end{lem}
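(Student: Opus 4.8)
The plan is to pass to the spectral subspaces and thereby reduce the diagrammatic covariance condition to an element-wise commutation relation, and then to observe that this relation cuts out a $C^*$-subalgebra of $A$ to which the generation hypothesis can be applied. The key auxiliary fact I would establish first is: for any $g\in G$ and any $a_g\in A_g$, the covariance diagram commutes on $a_g$ if and only if
\[
\pi(a_g)\mu(\Chi_k)=\mu(\Chi_{gk})\pi(a_g)\midtext{for all}k\in G.
\]
Since each $D_h$ is contained in $A_h$, condition \eqref{cov gen} is then precisely the statement that the covariance diagram commutes on every element of $\bigcup_{h\in S}D_h$.

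To prove this auxiliary fact I would compute both legs of the covariance diagram at $a_g$. Using nondegeneracy of $\mu$, the unitary $(\mu\otimes\id)(w_G)\in M(B\otimes C^*(G))$ is the strictly convergent sum $\sum_{h\in G}\mu(\Chi_h)\otimes h$; and since $\delta(a_g)=a_g\otimes g$, the left leg is $(\pi\otimes\id)(\delta(a_g))=\pi(a_g)\otimes g$ while the right leg is
\[
\ad(\mu\otimes\id)(w_G)\bigl(\pi(a_g)\otimes 1\bigr)=\sum_{h,k\in G}\mu(\Chi_h)\pi(a_g)\mu(\Chi_k)\otimes hk\inv .
\]
Equating the two, matching the coefficients of the group elements (which are linearly independent in $C^*(G)$), and resumming against the approximate identity $\sum_k\mu(\Chi_k)\to 1$ yields the displayed commutation relation; conversely, substituting that relation back into the right leg and resumming recovers $\pi(a_g)\otimes g$.

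Granting the auxiliary fact, the ``only if'' direction is immediate: if $(\pi,\mu)$ is covariant then the diagram commutes on all of $A$, in particular on each $D_h$, so \eqref{cov gen} holds. For the ``if'' direction, write $\Phi=(\pi\otimes\id)\circ\delta$ and $\Psi=\ad(\mu\otimes\id)(w_G)\circ(\,\cdot\otimes 1)$; because $\pi$ and $\mu$ are nondegenerate, $(\mu\otimes\id)(w_G)$ is a genuine unitary multiplier, so both $\Phi$ and $\Psi$ are $*$-preserving norm-decreasing homomorphisms $A\to M(B\otimes C^*(G))$, and hence $E=\{a\in A:\Phi(a)=\Psi(a)\}$ is a $C^*$-subalgebra of $A$. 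By \eqref{cov gen} together with the auxiliary fact, $E$ contains $\bigcup_{h\in S}D_h$, which by hypothesis generates $A$; therefore $E=A$ and $(\pi,\mu)$ is a covariant representation. The main obstacle is the bookkeeping in the middle paragraph: keeping careful track of the strictly convergent sum $w_G=\sum_h\Chi_h\otimes h$ and its image in the multiplier algebra, and justifying the coefficient-matching and resumming steps. Once that computation is in hand, the equalizer observation and the generation hypothesis make the rest routine.
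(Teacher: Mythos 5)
Your proof is correct and follows essentially the same route as the paper: reduce to checking the covariance identity on the generating elements of the spectral subspaces, and verify it there by expanding $(\mu\otimes\id)(w_G)$ as the strictly convergent sum $\sum_g\mu(\Chi_g)\otimes g$ and re-indexing; your equalizer observation just makes explicit the paper's remark that it suffices to check the identity on generators. The only addition is that you also prove the ``only if'' direction (via coefficient extraction), which the paper simply cites as well-known.
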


\begin{proof}
One direction is well-known (see e.g.~\cite{eq:induced}),  so assume \eqref{cov gen}.
Since $\bigcup_{g\in S}D_g$ generates $A$,
to prove that $(\pi,\mu)$ is a covariant representation
it suffices to check the identity
\begin{equation}\label{cov id}
(\pi\otimes\id)\circ\delta(a_h)=\ad(\mu\otimes\id)(w_G)(\pi(a_h)\otimes 1)
\righttext{for all}h\in S,a_h\in D_h.
\end{equation}

Because the group $G$ is discrete,
we have
\[
w_G=\sum_{g\in G}(\Chi_g\otimes g),
\]
where the sum converges strictly in the multiplier algebra $M(c_0(G)\otimes C^*(G))$.
Thus we get
\begin{align*}
&(\pi\otimes\id)\circ\delta(a_h)(\mu\otimes\id)(w_G)
\\&\quad=\sum_{g\in G}(\pi\otimes\id)\circ\delta(a_h)(\mu(\Chi_g)\otimes g)
\righttext{(strictly convergent)}
\\&\quad=\sum_{g\in G}(\pi(a_h)\otimes h)(\mu(\Chi_g)\otimes g)
\\&\quad=\sum_{g\in G}(\pi(a_h)\mu(\Chi_g)\otimes hg)
\\&\quad=\sum_{g\in G}(\mu(\Chi_{hg})\pi(a_h)\otimes hg)
\\&\quad=\sum_{g\in G}(\mu(\Chi_{g})\pi(a_h)\otimes g)
\righttext{(after $g\mapsto h\inv g$)}
\\&\quad=\sum_{g\in G}(\mu(\Chi_g)\otimes g)(\pi(a_h)\otimes 1)
\\&\quad=(\mu\otimes\id)(w_G)(\pi(a_h)\otimes 1),
\end{align*}
and so \eqref{cov id} holds.
\end{proof}

Given a coaction $(A,\delta)$ of $G$,
we write $A\rtimes_\delta G$
for the crossed product $C^*$-algebra,
and $(j_A,j_G)$ for the universal covariant representation of $(A,\delta)$ in $M(A\rtimes_\delta G)$,
meaning that for every covariant representation
$(\pi,\mu)$ of $(A,\delta)$ in $M(B)$ there is a unique nondegenerate homomorphism $\pi\times\mu\:A\rtimes_\delta G\to M(B)$ making the diagram
\[
\xymatrix{
A \ar[r]^-{j_A} \ar[dr]_\pi
&M(A\rtimes_\delta G) \ar@{-->}[d]^(.4){\pi\times\mu}_(.4){!}
&c_0(G) \ar[l]_-{j_G} \ar[dl]^\mu
\\
&M(B)
}
\]
commute.
Then
we have
\[A\rtimes_\delta G
=\clspn\big\{j_A(a_g)j_G(\Chi_h):g,h\in G,a_g\in A_g\big\} \]
and we can describe the crossed product as the closed linear span of pairs
\[
(a_g,h)\in A_g\times G,
\]
with operations given by
\begin{equation}\label{eqcalculus}
\begin{split}
(a_g,h)(b_k,\ell)&=\begin{cases}
(a_gb_k,\ell)
\case
h=k\ell,\\
\ 0 
& \text{otherwise;}
\end{cases}
\\
(a_g,h)^*&=(a_g^*,gh).
\end{split}
\end{equation}

Indeed, we only need to identify
\[
(a_g,h)=
j_A(a_g)j_G(\Chi_h).
\]

If $(A,\delta)$ is a coaction of $G$,
there is a \emph{dual action} $\what\delta$ of $G$ on $A\rtimes_\delta G$
given on generators by
\[
\what\delta_g(a_h,k)=
(a_h,kg\inv).
\]
Note that this is obtained from the usual dual action,
which is trivial on $j_A(A)$ and for which the homomorphism $j_G\:c_0(G)\to M(A\rtimes_\delta G)$ is equivariant for right translation on $c_0(G)$:
\[
\what\delta_g(a_h,k) =
\what\delta_g(j_A(a_h)j_G(\Chi_k)) = j_A(a_h)j_G(\Chi_{kg\inv})
=(a_h,kg\inv).
\]

There is a canonical surjective homomorphism
\[
\Phi\:A\rtimes_\delta G\rtimes_{\what\delta} G\to A\otimes \KK(\ell^2(G)),
\]
and
$\delta$ is \emph{maximal} if $\Phi$ is an isomorphism,
and
\emph{normal} if $\Phi$ factors through an isomorphism on the reduced crossed product:
\[
\xymatrix{
A\rtimes_\delta G \rtimes_{\what\delta} G \ar[r]^-\Phi \ar[d]_\Lambda
&A\otimes \KK
\\
A\rtimes_\delta G \rtimes_{\what\delta,r} G, \ar @{-->}[ur]_\simeq
}
\]
where $\Lambda$ is the regular representation onto the reduced crossed product by the dual action.
However, since $G$ is discrete,
to test for maximality we can use the following characterization:
letting $\AA=\{A_g\}_{g\in G}$ be the Fell bundle associated to the coaction $\delta$,
the inclusion maps $A_g\hookrightarrow A$
extend to a
surjection
\[
C^*(\AA)\to A,
\]
and $\delta$ is maximal if and only if this surjection
is injective \cite{maximal}.

\subsection{LCSC} \label{lcsc}

We refer to \cite{bkqs, jacklcsc} for left cancellative small categories and the $C^*$-algebras that may be associated to these.
For the special case of 
``categories of paths'', which includes higher-rank graphs, see \cite{jackpaths}.
If $\CC$ is a small category, we refer to an object as a \emph{vertex}, and denote by $\CC^0$ the set of vertices in $\CC$. We identify the objects with the identity morphisms.
A \emph{small category} can be defined as a set $\CC$, a subset $\CC^0$, two maps $r,s\:\CC\to\CC^0$ (called the range map and the source map, respectively), and a partially-defined multiplication
$(\alpha,\beta)\mapsto \alpha\beta$,
defined if and only if $s(\alpha)=r(\beta)$,
such that
for all $\alpha,\beta,\gamma\in\CC$ with $s(\alpha)=r(\beta)$ and $s(\beta)=r(\gamma)$,
\begin{enumerate}
\item $r(\alpha\beta)=r(\alpha)$ and $s(\alpha\beta)=s(\beta)$;
\item $\alpha(\beta\gamma)=(\alpha\beta)\gamma$;
\item $r(v)=s(v)=v$ for all $v\in \CC^0$;
\item $r(\alpha)\alpha=\alpha s(\alpha)=\alpha$.
\end{enumerate}
When writing $\alpha\beta$ for $\alpha, \beta\in \CC$, we often tacitly assume that  the product $\alpha\beta$ is defined. If $E,F\subset\CC$ we write $EF=\{\alpha\beta:\alpha\in E,\beta\in F\}$.

A \emph{left-cancellative small category} (LCSC) is a small category $\CC$
 such that for $\alpha,\beta,\gamma\in\CC$,
if $\alpha\beta=\alpha\gamma$ then $\beta=\gamma$.
From now on, $\CC$ will denote a LCSC.

The equivalence relation $\sim$ on $\CC$ is defined by
$\alpha\sim\beta$ if there is an invertible $\gamma\in\CC$ such that $\alpha=\beta\gamma$.

$\CC$ is \emph{finitely aligned} if
for all $\alpha,\beta\in\CC$ there is a (possibly empty) finite subset $F\subset \CC$ such that
\[
\alpha\CC\cap \beta\CC= F\CC\left(=\bigcup_{\gamma\in F}\gamma\CC\right).
\]

A subset $F\subset\CC$ is \emph{independent} if
for all $\alpha,\beta\in F$ with $\alpha\in \beta\CC$ we have $\alpha=\beta$.
If $\CC$ is finitely aligned and $F\subset\CC$ is finite,
we write $\bigvee F$ for any finite independent subset $L\subset\CC$ such that
\[
\bigcap_{\alpha\in F}\alpha\CC=L\CC.
\]
Note that $\bigvee F$ is only unique up to equivalence:
if $L'$ is another set with the same properties as $L$ then for all $\beta\in L$ there exists $\gamma\in L'$ such that $\beta\sim\gamma$,
and symmetrically for all $\gamma\in L'$ there exists $\beta\in L$ such that $\gamma\sim\beta$. 
When $F=\{\alpha,\beta\}$ we write
$\alpha\vee \beta$ instead of $\bigvee F$.

If $v\in\CC^0$ and $F\subset v\CC$, then $F$ is \emph{exhaustive} at $v$ if for all $\alpha\in v\CC$ there exists $\beta\in F$ such that $\alpha\CC\cap \beta\CC\ne\varnothing$.

A \emph{representation} of $\CC$ in a $C^*$-algebra $B$ is a map $T\:\CC\to B$ such that
for all $\alpha,\beta\in\CC$,
\begin{enumerate}
\item $T_\alpha^*T_\alpha=T_{s(\alpha)}$;
\item $T_\alpha T_\beta=T_{\alpha\beta}$ if $s(\alpha)=r(\beta)$;
\item $T_\alpha T_\alpha^*T_\beta T_\beta^*
=\bigvee_{\gamma\in \alpha\vee\beta} T_\gamma T_\gamma^*$,
\end{enumerate}
and a representation $T$ is \emph{covariant} if
\begin{enumerate}
\setcounter{enumi}{3}
\item $T_v=\bigvee_{\alpha\in F} T_\alpha T_\alpha^*$ for all $v\in\CC^0$ and every finite exhaustive set $F$ at $v$.
\end{enumerate}
We recall that if $T$ is a representation then $T_v$ is a projection in $B$ for every $v \in \CC^0$, and $T_\alpha$ is a partial isometry in $B$ for every $\alpha \in \CC$, so $T_\alpha T_\alpha^*$ is a projection in $B$ for every $\alpha \in \CC$. We also recall that the joins $\bigvee_{\gamma\in \alpha\vee \beta}T_\gamma T_\gamma^*$ and $\bigvee_{\alpha\in F}T_\alpha T_\alpha^*$ are a priori defined as projections in $B^{**}$, and that, by convention, the join over an empty index set is defined to be zero. Thus, if $v, w \in \CC^0$ and $v\neq w$, then $T_v T_w = 0$, i.e., the projections $T_v$ and $T_w$ are orthogonal to each other. 

Moreover,
 if $T$ is a representation
then:
\begin{itemize}
\item $T_\alpha^* T_\beta=0$ if $s(\alpha)\ne s(\beta)$;
\item if $\alpha$ is invertible then $T_{\alpha\inv}=T_\alpha^*$
and $T_\alpha T_\alpha^*=T_{r(\alpha)}$;
\item if $\beta\sim\gamma$ then $T_\beta T_\beta^*=T_\gamma T_\gamma^*$.
\end{itemize}
The \emph{Toeplitz algebra} $\TT(\CC)$ is generated by a universal representation of $\CC$,
and the \emph{Cuntz-Krieger algebra} $\OO(\CC)$ is generated by a universal covariant representation of $\CC$.
We are more interested in $\OO(\CC)$ than $\TT(\CC)$,
and we write $t\:\CC\to \OO(\CC)$ for the universal covariant representation.
The universal property means that for every covariant representation $T\:\CC\to B$
there is unique homomorphism
$\phi\:\OO(\CC)\to B$,
called the \emph{integrated form} of $T$,
such that $\phi\circ t=T$.

As the reader will easily realize, all our results concerning Cuntz-Krieger algebras of finitely aligned LCSCs in the present paper carry over to the associated Toeplitz algebras, with simpler proofs.

\section{Cocycles and skew products}
\label{sec:skew}

In this section we let $\CC$ be a 
LCSC
and $G$ a discrete group. We consider  
a $G$-valued \emph{cocycle} $\eta$ on $\CC$, that is, 
a functor $\eta\:\CC\to G$.

\begin{prop}\label{is coaction}
Assume that $\CC$ is finitely aligned. 
Let $t\:\CC\to \OO(\CC)$ be the universal representation,
and define $T\:\CC\to \OO(\CC)\otimes C^*(G)$ by
\[
T_\alpha=t_\alpha\otimes \eta(\alpha).
\]
Then $T$ is a covariant representation of $\CC$,
and its integrated form $\delta\:\OO(\CC)\to \OO(\CC)\otimes C^*(G)$ is a coaction.
\end{prop}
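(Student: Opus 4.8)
The plan is to check that $T$ is a covariant representation of $\CC$, so that its integrated form $\delta$ exists by the universal property of $\OO(\CC)$, and then to verify that $\delta$ is a coaction by means of \lemref{faithful}, which reduces that task to establishing the coaction identity \eqref{coaction}, the coaction-nondegeneracy condition \eqref{nondegenerate}, and nondegeneracy of $\delta$ as a homomorphism. I expect the only genuinely delicate point to be the verification of axioms (iii) and (iv) for $T$, where one must control the behaviour on joins of projections of the tensor embedding $a\mapsto a\otimes 1$ --- joins that a priori live in the bidual; everything else is a routine computation on generators.

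Throughout one uses that $\eta$, being a functor, preserves identity morphisms, so that $\eta(v)=e$ for every $v\in\CC^0$, and that $\eta(\alpha)$ is a unitary of $C^*(G)$ with $\eta(\alpha)^*=\eta(\alpha)\inv$. Axioms (i) and (ii) for a representation follow at once from the corresponding properties of $t$ and functoriality of $\eta$: for instance $T_\alpha^*T_\alpha=t_\alpha^*t_\alpha\otimes\eta(\alpha)\inv\eta(\alpha)=t_{s(\alpha)}\otimes 1=T_{s(\alpha)}$, and $T_\alpha T_\beta=t_\alpha t_\beta\otimes\eta(\alpha)\eta(\beta)=t_{\alpha\beta}\otimes\eta(\alpha\beta)=T_{\alpha\beta}$ whenever $s(\alpha)=r(\beta)$. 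For (iii) and (iv) one first notes that $T_\alpha T_\alpha^*=t_\alpha t_\alpha^*\otimes 1$. Let $\iota\:\OO(\CC)\to\OO(\CC)\otimes C^*(G)$ be the injective homomorphism $\iota(a)=a\otimes 1$; it extends to a normal $*$-homomorphism $\iota^{**}$ between the biduals, and since a normal $*$-homomorphism preserves suprema of bounded increasing nets, it preserves range projections, hence finite joins of projections. Consequently $\bigvee_{\gamma\in\alpha\vee\beta}T_\gamma T_\gamma^*=\iota^{**}\bigl(\bigvee_{\gamma\in\alpha\vee\beta}t_\gamma t_\gamma^*\bigr)=\iota(t_\alpha t_\alpha^*t_\beta t_\beta^*)=T_\alpha T_\alpha^*T_\beta T_\beta^*$, the middle equality being axiom (iii) for $t$; axiom (iv) for $T$ reduces to (iv) for $t$ in exactly the same way. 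Hence $T$ is a covariant representation, and its integrated form is a homomorphism $\delta\:\OO(\CC)\to\OO(\CC)\otimes C^*(G)$ determined by $\delta(t_\alpha)=t_\alpha\otimes\eta(\alpha)$.

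It remains to verify the hypotheses of \lemref{faithful}. The coaction identity \eqref{coaction} is checked on the generators $t_\alpha$: using $\delta_G(g)=g\otimes g$, both $(\delta\otimes\id)\circ\delta$ and $(\id\otimes\delta_G)\circ\delta$ send $t_\alpha$ to $t_\alpha\otimes\eta(\alpha)\otimes\eta(\alpha)$, so the two homomorphisms agree. For \eqref{nondegenerate}, set $X=\clspn\{\delta(\OO(\CC))(1\otimes C^*(G))\}$, a closed subspace with $\delta(\OO(\CC))X\subset X$. Since $\eta(\alpha)$ is unitary we get $t_\alpha\otimes c\in X$ and $t_\alpha^*\otimes c\in X$ for all $c\in C^*(G)$, and then an induction on word length shows $w\otimes c\in X$ for every word $w$ in the $t_\alpha$ and their adjoints and every $c$, because $\delta(t_\alpha)(w\otimes c)=(t_\alpha w)\otimes\eta(\alpha)c$ and $\delta(t_\alpha^*)(w\otimes c)=(t_\alpha^*w)\otimes\eta(\alpha)\inv c$ both lie in $\delta(\OO(\CC))X\subset X$. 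As such words span a dense subalgebra of $\OO(\CC)$, this gives $X=\OO(\CC)\otimes C^*(G)$, which is \eqref{nondegenerate}; nondegeneracy of $\delta$ as a homomorphism then follows as well, being an immediate formal consequence of \eqref{nondegenerate}. By \lemref{faithful}, $\delta$ is injective, hence a coaction.
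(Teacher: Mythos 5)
Your proposal is correct and follows essentially the same route as the paper: verify the four axioms for $T$ on generators, check the coaction identity on the $t_\alpha$, establish the nondegeneracy condition \eqref{nondegenerate}, and invoke \lemref{faithful} for injectivity. The only differences are that you justify more explicitly two points the paper treats implicitly --- the preservation of joins of projections under $a\mapsto a\otimes 1$ (via the normal extension to the biduals) and the density argument for \eqref{nondegenerate} (via induction on word length) --- both of which are sound.
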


\begin{proof}
We first check the axioms for a 
covariant
representation:

(1)
For all $\alpha\in\CC$,
\begin{align*}
T_\alpha^*T_\alpha
&=(t_\alpha\otimes \eta(\alpha))^*(t_\alpha\otimes \eta(\alpha))
\\&=(t_\alpha^*\otimes \eta(\alpha)\inv)(t_\alpha\otimes \eta(\alpha))
\\&=t_\alpha^*t_\alpha\otimes 1
\\&=t_{s(\alpha)}\otimes 1
\\&=T_{s(\alpha)},
\end{align*}

(2)
if $s(\alpha)=r(\beta)$ then
\begin{align*}
T_\alpha T_\beta
&=(t_{\alpha}\otimes \eta(\alpha))(t_{\beta}\otimes \eta(\beta))
\\&=t_\alpha t_\beta\otimes \eta(\alpha)\eta(\beta)
\\&=t_{\alpha\beta}\otimes \eta(\alpha\beta)
\\&=T_{\alpha\beta},
\end{align*}

(3)
for all $\alpha,\beta\in\CC$,
\begin{align*}
T_\alpha T_\alpha^* T_\beta T_\beta^*
&=(t_\alpha\otimes \eta(\alpha))
(t_\alpha\otimes \eta(\alpha))^*
(t_\beta\otimes \eta(\beta))
(t_\beta\otimes \eta(\beta))^*
\\&=(t_\alpha\otimes \eta(\alpha))
(t_\alpha^*\otimes \eta(\alpha)\inv)
(t_\beta\otimes \eta(\beta))
(t_\beta^*\otimes \eta(\beta)\inv)
\\&=t_\alpha t_\alpha^* t_\beta t_\beta^*\otimes 1
\\&=\left(\bigvee_{\gamma\in \alpha\vee \beta} t_\gamma t_\gamma^*\right)\otimes 1
\\&=\bigvee_{\gamma\in \alpha\vee \beta}
(t_\gamma t_\gamma^*\otimes 1)
\\&=\bigvee_{\gamma\in \alpha\vee \beta}
\bigl(t_\gamma t_\gamma^*\otimes \eta(\gamma)\eta(\gamma)\inv\bigr)
\\&=\bigvee_{\gamma\in \alpha\vee \beta}T_\gamma T_\gamma^*,
\end{align*}
and

(4)
for any finite exhaustive set $F$ at $v\in\CC^0$,
\begin{align*}
T_v
&=t_v\otimes \eta(v)
\\&=t_v\otimes 1
\\&=\left(\bigvee_{\alpha\in F} t_\alpha t_\alpha^*\right)\otimes 1
\\&=\bigvee_{\alpha\in F}\bigl(t_\alpha t_\alpha^*\otimes 1\bigr)
\\&=\bigvee_{\alpha\in F}\bigl(t_\alpha t_\alpha^*\otimes \eta(\alpha)\eta(\alpha)\inv\bigr)
\\&=\bigvee_{\alpha\in F}T_\alpha T_\alpha^*.
\end{align*}

Thus we have a homomorphism
$\delta\:\OO(\CC)\to \OO(\CC)\otimes C^*(G)$
that is the integrated form of $T$.

The coaction identity is easily checked on generators:
\begin{align*}
(\delta\otimes\id)\circ\delta(t_\alpha)
&=t_\alpha\otimes \eta(\alpha)\otimes \eta(\alpha)
\\&=(\id\otimes \delta_G)(t_\alpha\otimes \eta(\alpha))
\\&=(\id\otimes \delta_G)\circ \delta(t_\alpha).
\end{align*}

This homomorphism $\delta$
satisfies the coaction-nondegeneracy condition \eqref{nondegenerate} because
every elementary tensor $a\otimes c\in \OO(\CC)\otimes C^*(G)$
is in the $C^*$-subalgebra generated by
elementary tensors of the form
$t_\alpha\otimes \eta(\alpha)g$
for $\alpha\in\CC$ and $g\in G$.
It now follows from \lemref{faithful}
that $\delta$ is faithful, and hence is a coaction of $G$ on $\OO(\CC)$.
\end{proof}

\cite[Theorem~7.1]{pqr:cover}
proves that,
given a $G$-valued cocycle on a $k$-graph,
the associated coaction $\delta$ is both maximal and normal.
In the 
context of cocycles on finitely aligned LCSCs,
it is natural to ask whether these properties of $\delta$ continue to hold, and we answer this in the following theorem.

In preparation,
recall
the notation
$P_0=\{t_\alpha t_\alpha^*:\alpha\in\CC\}$
and 
$P=\{p_1\cdots p_n: n\in \N, p_1, \ldots, p_n\in P_0\}$
from \cite[Section~6]{jackpaths},
and note that the proof of \cite[Proposition~6.7]{jackpaths} implies that
\[
\OO(\CC)=\clspn\{t_\alpha t_\beta^*\,q:\alpha,\beta\in\CC, q\in P\}.
\]

\begin{thm}\label{maximal}
Assume that $\CC$ is finitely aligned. Let
$\delta$ be the coaction of $G$ on $\OO(\CC)$ from \propref{is coaction},
and let $\AA$ be the associated Fell bundle.
Then the associated surjection $\pi\:C^*(\AA)\to \OO(\CC)$ is an isomorphism.
In particular,
$\delta$ 
is maximal.
However,
$\delta$ may fail to be normal.
Finally, the fibres of the Fell bundle $\AA$ are given by
\begin{equation}\label{spectral}
A_g=\clspn\{t_\alpha t_\beta^*\,q:\eta(\alpha)\eta(\beta)\inv=g,q\in P\}.
\end{equation}
\end{thm}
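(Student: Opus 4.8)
The plan is to prove the three assertions in order, with the bulk of the work devoted to showing that $\pi\:C^*(\AA)\to\OO(\CC)$ is an isomorphism. First I would identify the spectral subspaces. Since $\delta(t_\alpha t_\beta^* q) = t_\alpha t_\beta^* q \otimes \eta(\alpha)\eta(\beta)\inv$ for $\alpha,\beta\in\CC$ and $q\in P$ (using that $q$ is a product of projections $t_\gamma t_\gamma^*$, each fixed by $\delta$ up to tensoring with $\eta(\gamma)\eta(\gamma)\inv = e$), every such element lies in $A_{\eta(\alpha)\eta(\beta)\inv}$. Combined with the spanning description $\OO(\CC)=\clspn\{t_\alpha t_\beta^* q\}$ recalled just before the theorem, this gives $A = \clspn\{A_g : g\in G\}$ and the inclusion ``$\supseteq$'' in \eqref{spectral}. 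For the reverse inclusion, I would use that the spectral subspaces are the ranges of the projections $a\mapsto f\cdot a$ for suitable $f\in B(G)$ (the Fourier--Stieltjes idempotents $\Chi_g$), which are bounded and preserve the dense spanning set appropriately; applying such a projection to a general element of $\OO(\CC)$ written as a limit of linear combinations of the $t_\alpha t_\beta^* q$ extracts exactly the terms with $\eta(\alpha)\eta(\beta)\inv = g$. This settles \eqref{spectral}.

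For maximality, by the criterion recalled at the end of the Preliminaries it suffices to show the canonical surjection $\pi\:C^*(\AA)\to\OO(\CC)$ is injective, equivalently that it has a left inverse, equivalently that there is a representation of $\OO(\CC)$ ``disintegrating'' through the Fell bundle. The natural strategy, following \cite{pqr:cover}, is to build a covariant representation of $\CC$ into $M(C^*(\AA))$ whose integrated form inverts $\pi$. Write $i\:\AA\to C^*(\AA)$ (or into $M(C^*(\AA))$) for the universal Fell-bundle map. Since $t_\alpha \in A_{\eta(\alpha)}$ for every $\alpha\in\CC$ — because $\delta(t_\alpha) = t_\alpha\otimes\eta(\alpha)$ — I would define $S\:\CC\to M(C^*(\AA))$ by $S_\alpha = i(t_\alpha)$, regarding $t_\alpha$ as an element of the fibre $A_{\eta(\alpha)}$. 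The key point is that relations (1)--(4) for a covariant representation only involve products $t_\alpha^* t_\alpha$, $t_\alpha t_\beta$ (with $s(\alpha)=r(\beta)$), and finite joins of the projections $t_\alpha t_\alpha^*$, and that these identities already hold inside the Fell bundle $\AA$ at the level of the relevant fibres: the multiplication and involution of $\AA$ are precisely those of $\OO(\CC)$ restricted to the fibres, and the joins in (3) and (4) are joins of mutually commuting projections lying in the (closed) $C^*$-subalgebra generated by the fibres, which $i$ maps faithfully. Hence $S$ is a covariant representation, its integrated form $\sigma\:\OO(\CC)\to M(C^*(\AA))$ satisfies $\sigma\circ t = S = i\circ t$ on generators, so $\pi\circ\sigma = \id$ and $\sigma\circ\pi = \id$, giving the isomorphism; maximality of $\delta$ follows.

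The main obstacle I anticipate is the handling of the joins in establishing covariance of $S$: a priori the expressions $\bigvee_{\gamma\in\alpha\vee\beta} t_\gamma t_\gamma^*$ live in $\OO(\CC)^{**}$, and one must check that the corresponding join of $i(t_\gamma t_\gamma^*) = i(t_\gamma)i(t_\gamma)^*$ in $M(C^*(\AA))$ is computed consistently — i.e., that $i$ (restricted to the $C^*$-subalgebra of $\OO(\CC)$ generated by the commuting family of range projections, which is exactly where $P$ lives) is an isometric embedding compatible with suprema of commuting projections. This is where the structure of the Fell bundle over the \emph{unit fibre} $A_e$ is doing the real work, and it is the step that needs to be argued carefully rather than waved through; everything else is a routine transcription of the axioms. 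Finally, for the failure of normality, I would simply exhibit a counterexample — e.g.\ take $\CC$ to be the category of finite paths of a directed graph whose Cuntz--Krieger algebra is not exact or for which the reduced crossed product differs from the full one, equivalently a $k$-graph example already known from \cite{pqr:cover} not to yield a normal coaction — and remark that since such $\CC$ is a finitely aligned LCSC, the claim follows; I would keep this part brief, as the theorem only asserts that normality ``may fail.''
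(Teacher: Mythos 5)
Your treatment of the isomorphism, maximality, and the spectral subspaces follows essentially the same route as the paper: the paper also defines a covariant representation of $\CC$ in $C^*(\AA)$ by sending $\alpha$ to $t_\alpha$ viewed as an element of the fibre $A_{\eta(\alpha)}$, checks that its integrated form is a one-sided (hence two-sided) inverse of $\pi$ by observing that the composite fixes the image of the Fell bundle, and establishes \eqref{spectral} by exactly the projection $E_g=(\id\otimes\Chi_g)\circ\delta$ you describe. The ``obstacle'' you flag about the joins is real but resolvable in the way you suggest: the joins in axioms (3) and (4) are finite joins of commuting projections, hence polynomial (inclusion--exclusion) expressions inside the unit fibre $A_e$, which embeds isometrically in $C^*(\AA)$; so the identities transfer verbatim. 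The paper compresses this into the remark that ``the algebraic structure of $C^*(\AA)$ guarantees that $T$ is a covariant representation.''

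The genuine gap is in your argument that $\delta$ may fail to be normal. Neither of your proposed sources of counterexamples can work. The path category of a directed graph is a $1$-graph, and more generally \cite[Theorem~7.1]{pqr:cover} --- quoted immediately before the theorem --- shows that for \emph{any} $G$-valued cocycle on a $k$-graph the induced coaction is both maximal \emph{and} normal; so there is no ``$k$-graph example already known from \cite{pqr:cover} not to yield a normal coaction,'' and that is precisely why the question is nontrivial for general LCSC's. (Graph $C^*$-algebras are also nuclear, so exactness cannot be the obstruction.) A counterexample must leave the $k$-graph world. The paper's choice is the simplest possible: take $\CC=G$ itself, regarded as a one-object finitely aligned LCSC, with $\eta=\id_G$. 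Then $\OO(\CC)=C^*(G)$ and the induced coaction is the comultiplication $\delta_G$, which is normal if and only if $G$ is amenable; any nonamenable $G$ therefore witnesses the failure of normality.
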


\begin{proof}
For the 
first assertion,
we must show that $\pi$ is injective.
Define a map $T\:\CC\to C^*(\AA)$ by
\[
T_\alpha=t_\alpha\righttext{(regarded now as an element of $C^*(\AA)$).}
\]
This definition of $T$ might appear confusing at first,
but most of the point of this proof is to avoid conflating things that are really in different places.
We are accustomed to regarding the generators $t_\alpha$ as elements of $\OO(\CC)$,
and now we need to think of them alternatively as elements of $C^*(\AA)$.
The algebraic structure of $C^*(\AA)$ guarantees that $T$ is a covariant representation.
Thus its integrated form is a homomorphism $\rho\:\OO(\CC)\to C^*(\AA)$.

Note that
\[
\pi(T_\alpha)=t_\alpha\righttext{for all}\alpha\in\CC.
\]
Thus $\rho\circ\pi$ is an endomorphism of $C^*(\AA)$ that agrees with the identity automorphism $\id$
on the (image in $C^*(\AA)$ of the) Fell bundle $\AA$,
and hence $\rho\circ\pi=\id$.
Therefore $\pi$ is also injective, and hence is an isomorphism.

To see the possible failure of normality,
just note that the group $G$ itself is a finitely aligned LCSC,
and we can take the cocycle $\eta\:G\to G$ to be the identity map,
in which case
the associated coaction is $\delta_G$.
As is well-known, $\delta_G$ is normal if and only if $G$ is amenable.

Finally, for the equality~\eqref{spectral},
fix $g\in G$, and
let $R$ denote the right-hand side.
Note first of all that by definition
\[
A_g=\{a\in \OO(\CC):\delta(a)=a\otimes g\},
\]
so because $\delta(q) = q\otimes 1$ for all $q\in P$,
we have $R\subset A_g$.

For the opposite containment,
we argue as in the proof of \cite[Lemma~7.9]{pqr:cover}:
let $\Chi_g\in B(G)$ be the characteristic function of $\{g\}$.
Then
by the basic theory of coactions,
there is an idempotent bounded linear surjection
\[
E_g=(\id\otimes \Chi_g)\circ\delta\: \OO(\CC)\to A_g.
\]
Any $a\in \OO(\CC)$ can be approximated by a linear combination
\[\sum_{i=1}^n c_it_{\alpha_i} t_{\beta_i}^*q_i\]
with $\alpha_i,\beta_i\in \CC$ and $q_i\in P$,
and hence, if $a\in A_g$, then
\begin{align*}
a
&=E_g(a)
\approx E_g\left(\sum_{i=1}^n c_it_{\alpha_i}t_{\beta_i}^*q_i\right)
=\sum_{i=1}^n c_iE_g(t_{\alpha_i}t_{\beta_i}^*q_i)
\\&=\sum_{\substack{ i=1, \ldots, n,\\ \eta(\alpha_i)\eta(\beta_i)\inv=g}} c_it_{\alpha_i}t_{\beta_i}^*q_i\ \in R.
\qedhere\end{align*}
\end{proof}

Combined with the existing theory for Fell bundles over discrete groups, Theorem~\ref{maximal} provides a useful tool for investigating certain properties of $\OO(\CC)$. We illustrate this by giving a set of conditions ensuring that $\OO(\CC)$ is nuclear.
We first recall from \cite[Definition~20.4]{exelbook} that a Fell bundle $\mathcal{B}=\{B_g\}_{g\in G}$ over $G$ is said to have \emph{the approximation property} if there exists a net $\{a_i\}_{i\in I} $ of finitely supported functions from $G$ into  $B_e$ satisfying \[\sup_{i\in I} \| \sum_{g\in G} a_i(g)^* a_i(g)\| < \infty\]
and
\[ \lim_i \sum_{h\in G} a_i(gh)^*ba_i(h) = b \quad \text{for all } b \text{ in each $B_g$}.\] 
We also recall that this property is satisfied whenever $G$ is amenable (cp.\ \cite[Theorem~2.7]{exelbook}). 
\begin{prop} Assume that $\CC$ is finitely aligned, and let $\AA$ be the Fell bundle over $G$ as in Theorem~\ref{maximal}, so that \[A_e=\clspn\{t_\alpha t_\beta^*\,q:\eta(\alpha)=\eta(\beta), q\in P\},\] 
where $e$ denotes the unit of $G$. Assume that $\AA$ has the approximation property and that $A_e$ is nuclear. Then $\OO(\CC)$ is nuclear.
\end{prop}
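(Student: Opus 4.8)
The plan is to deduce nuclearity of $\OO(\CC)$ directly from the identification $\OO(\CC)\cong C^*(\AA)$ established in \thmref{maximal}, by invoking the general theory of Fell bundles over discrete groups. The key point is that a Fell bundle $\AA$ with the approximation property and with nuclear unit fibre $A_e$ has nuclear cross-sectional $C^*$-algebra.

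First I would recall the relevant result from Exel's book: by \cite[Proposition~25.10]{exelbook} (or the surrounding discussion of nuclearity in Chapter~25), if a Fell bundle $\AA=\{A_g\}_{g\in G}$ over a discrete group $G$ has the approximation property, then the canonical surjection $C^*(\AA)\to C_r^*(\AA)$ is an isomorphism; and moreover, if in addition the unit fibre $A_e$ is nuclear, then $C^*(\AA)=C_r^*(\AA)$ is nuclear. This is precisely the role of the approximation property: it plays, for Fell bundles, the part that amenability plays for groups, forcing the full and reduced cross-sectional algebras to coincide and transferring nuclearity from the unit fibre to the whole algebra.

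So the proof is essentially a two-line argument: by \thmref{maximal}, the surjection $\pi\:C^*(\AA)\to\OO(\CC)$ is an isomorphism, where $\AA$ is the Fell bundle with fibres given by \eqref{spectral}, and in particular with unit fibre $A_e=\clspn\{t_\alpha t_\beta^*\,q:\eta(\alpha)=\eta(\beta),q\in P\}$ as stated. Since by hypothesis $\AA$ has the approximation property and $A_e$ is nuclear, the cited results give that $C^*(\AA)$ is nuclear, and hence $\OO(\CC)\cong C^*(\AA)$ is nuclear.

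The only real content beyond bookkeeping is making sure one cites the correct statement from \cite{exelbook}: the combination ``approximation property $+$ nuclear unit fibre $\Rightarrow$ nuclear cross-sectional algebra'' appears there, but one must confirm the precise proposition number and, if it is only stated for $C_r^*(\AA)$, supplement it with the fact (again from the approximation property) that $C^*(\AA)=C_r^*(\AA)$. I expect that matching the hypotheses to the exact form of Exel's nuclearity criterion — rather than any genuine mathematical difficulty — will be the main (minor) obstacle; the structural work has already been done in \thmref{maximal}.
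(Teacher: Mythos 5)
Your argument is correct and is exactly the paper's proof: combine \thmref{maximal} (giving $\OO(\CC)\cong C^*(\AA)$) with \cite[Proposition~25.10]{exelbook}, which yields nuclearity of the cross-sectional algebra from the approximation property together with nuclearity of $A_e$. The paper's proof is precisely this one-line citation, so there is nothing to add beyond your (accurate) caution about matching the exact form of Exel's statement.
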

\begin{proof} This follows by combining Theorem~\ref{maximal} with \cite[Proposition~25.10]{exelbook}. 
\end{proof} 

\begin{rem} Kwa\'{s}niewski and Meyer have shown in \cite[Theorem 5.8]{KM:Fell}
 how strong pure infiniteness of the reduced $C^*$-algebra of a Fell bundle over a group can be deduced from certain properties of the Fell bundle. Thus,  if $G$ is amenable, one may use this result  in combination with Theorem~\ref{maximal} to obtain conditions on the Fell bundle $\AA$ associated to $\eta$ ensuring that $\OO(\CC)$ will be strongly purely infinite. Similarly, in the case where $\eta:\CC \to \Z$, one may invoke   \cite[Theorem~9.7]{KM:Fell} to state conditions on $\AA$ sufficient for $\OO(\CC)$ to be simple. 
 It should here be noted that Ortega and Pardo \cite{OPTight} have recently given sufficient conditions for $\OO(\CC)$ to be simple, and even have a result explicitly for Zappa-Sz\'ep products.
 \end{rem}

We next introduce the skew product category associated to a cocycle on a LCSC. This construction is well-known, at least in the case of groupoids and higher-rank graphs, although some authors use different conventions than ours. 

\begin{defn}\label{skew def}
The \emph{skew product}
$\skpr$
of $\CC$ by a cocycle $\eta\:\CC\to G$
is the set
$\CC\times G$
with partially-defined multiplication
\[
(\alpha,g)(\beta,h)=(\alpha\beta,h)\righttext{if} s(\alpha)=r(\beta)\text{ and }g=\eta(\beta)h.
\]
\end{defn}

\begin{lem}\label{skew finitely aligned}
With the above multiplication, $\skpr$ is a 
LCSC,
with range and source maps given by
\begin{align*}
s(\alpha,g)&=
(s(\alpha),g),\\
r(\alpha,g)&=
(r(\alpha),\eta(\alpha)g),
\end{align*}
and vertex set given by
$(\skpr)^0=\CC^0\times G$.
Moreover, $\skpr$ is finitely aligned if $\CC$ is.
\end{lem}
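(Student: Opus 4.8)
The plan is to verify the small-category axioms for $\skpr$ directly from \defnref{skew def}, then read off $r$ and $s$, and finally handle finite alignment by relating right ideals in $\skpr$ to right ideals in $\CC$.

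First I would check that the product $(\alpha,g)(\beta,h)=(\alpha\beta,h)$ is well-defined whenever $s(\alpha,g)=r(\beta,h)$: spelling out the candidate formulas, $s(\alpha,g)=(s(\alpha),g)$ and $r(\beta,h)=(r(\beta),\eta(\beta)h)$, so equality of these says exactly $s(\alpha)=r(\beta)$ and $g=\eta(\beta)h$, which is precisely the condition in \defnref{skew def}; so composability in $\skpr$ matches composability of the underlying morphisms together with the group constraint. Associativity: for $(\alpha,g)(\beta,h)(\gamma,k)$ one computes both bracketings give $(\alpha\beta\gamma,k)$, using associativity in $\CC$ and the cocycle (functor) property $\eta(\beta\gamma)=\eta(\beta)\eta(\gamma)$ to see the group conditions are consistent. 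The identity morphisms: for $v\in\CC^0$ and $g\in G$, the pair $(v,g)$ satisfies $r(v,g)=(v,\eta(v)g)=(v,g)=(s(v),g)=s(v,g)$ since $\eta(v)=e$; and $(r(\alpha),\eta(\alpha)g)(\alpha,g)=(\alpha,g)=(\alpha,g)(s(\alpha),g)$, giving axiom (iv). Left cancellativity is immediate: $(\alpha,g)(\beta,h)=(\alpha,g)(\gamma,h')$ forces $h=h'$ from the second coordinate and $\alpha\beta=\alpha\gamma$, hence $\beta=\gamma$ by left cancellativity of $\CC$, and then $g=\eta(\beta)h=\eta(\gamma)h'$ automatically. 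Thus $(\skpr)^0=\CC^0\times G$ and the range/source formulas hold.

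For finite alignment I would first describe the principal right ideal $(\alpha,g)(\skpr)$. A pair $(\mu,n)$ lies in $(\alpha,g)(\skpr)$ iff there is $(\beta,h)$ with $s(\alpha)=r(\beta)$, $g=\eta(\beta)h$, and $(\mu,n)=(\alpha\beta,h)$; equivalently $\mu\in\alpha\CC$, say $\mu=\alpha\beta$, and $n=h=\eta(\beta)\inv g=\eta(\alpha)\eta(\alpha\beta)\inv g=\eta(\alpha)\eta(\mu)\inv g$. Hence
\[
(\alpha,g)(\skpr)=\{(\mu,n):\mu\in\alpha\CC,\ n=\eta(\alpha)\eta(\mu)\inv g\}.
\]
Consequently, for two pairs $(\alpha,g)$ and $(\beta,h)$, a pair $(\mu,n)$ is in the intersection $(\alpha,g)(\skpr)\cap(\beta,h)(\skpr)$ iff $\mu\in\alpha\CC\cap\beta\CC$ and $n=\eta(\alpha)\eta(\mu)\inv g=\eta(\beta)\eta(\mu)\inv h$; note the two equations for $n$ are compatible only if $\eta(\alpha)g=\eta(\beta)h$ in $G$ (equivalently $r(\alpha,g)=r(\beta,h)$), in which case they coincide, and then $n$ is determined by $\mu$. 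So if $r(\alpha,g)\ne r(\beta,h)$ the intersection is empty; otherwise, writing $r(\alpha,g)=r(\beta,h)=:(v,m)$, choose a finite $F\subset\CC$ with $\alpha\CC\cap\beta\CC=F\CC$ (using finite alignment of $\CC$), discard the elements of $F$ not lying in $\alpha\CC$ (they contribute nothing), and set $\wilde F=\{(\gamma,\eta(\alpha)\eta(\gamma)\inv g):\gamma\in F\}$. One then checks $(\alpha,g)(\skpr)\cap(\beta,h)(\skpr)=\wilde F(\skpr)$: the inclusion $\supset$ is direct from the ideal description, and $\subset$ follows because any $\mu$ in the intersection factors as $\mu=\gamma\nu$ for some $\gamma\in F$, whence $(\mu,n)=(\gamma,\eta(\alpha)\eta(\gamma)\inv g)(\nu,n)$ lies in $\wilde F(\skpr)$ after checking the group constraint, which reduces to the functoriality of $\eta$ again.

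I do not expect a serious obstacle here; every step is a routine unwinding of definitions, and the only thing to be careful about is bookkeeping the group coordinates and consistently using $\eta(\alpha\beta)=\eta(\alpha)\eta(\beta)$ (together with $\eta(v)=e$ on vertices). The mildly delicate point is the observation that the intersection of two principal right ideals in $\skpr$ is empty unless the two generators have the same range, which is why finite alignment of $\skpr$ follows from that of $\CC$ with essentially the same witnessing set $F$, suitably decorated with group elements.
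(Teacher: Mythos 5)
Your proposal follows essentially the same route as the paper's proof: verify the small-category axioms and left cancellation directly, describe the principal right ideal $(\alpha,g)(\skpr)$ explicitly, observe that the intersection $(\alpha,g)(\skpr)\cap(\beta,h)(\skpr)$ is empty unless $\eta(\alpha)g=\eta(\beta)h$, and then decorate a finite generating set $F$ for $\alpha\CC\cap\beta\CC$ with the forced group coordinates. The one thing you must fix is the order of the group factors in that forced coordinate: from $g=\eta(\beta)h$ and $\eta(\mu)=\eta(\alpha)\eta(\beta)$ one gets $n=h=\eta(\beta)\inv g=\eta(\mu)\inv\eta(\alpha)g$, \emph{not} $\eta(\alpha)\eta(\mu)\inv g$ as you wrote. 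For non-abelian $G$ these differ; with your version the two expressions for $n$ coming from membership in $(\alpha,g)(\skpr)$ and in $(\beta,h)(\skpr)$ do not reduce to $\eta(\alpha)g=\eta(\beta)h$ by cancellation, and the composability constraint for $(\gamma,\eta(\alpha)\eta(\gamma)\inv g)(\nu,n)$ fails to check out --- which is internal evidence that you meant the transposed formula. With the correction $n=\eta(\mu)\inv\eta(\alpha)g$ and $\wilde F=\{(\gamma,\eta(\gamma)\inv\eta(\alpha)g):\gamma\in F\}$ (exactly the paper's witnessing set), every step goes through as you describe. A minor remark: discarding elements of $F$ not lying in $\alpha\CC$ is vacuous, since each $\gamma\in F$ satisfies $\gamma\in\gamma\CC\subset F\CC=\alpha\CC\cap\beta\CC$.
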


\begin{proof}
For this proof let $\DD=\skpr$.
We first check that $\DD$ is a small category.
By construction, $\DD$ is a set.
We have maps $s,r\:\DD\to \DD^0$
such that $(\alpha,g)(\beta,h)$ is defined if and only if $s(\alpha,g)=r(\beta,h)$.
We check the axioms:

(1)
if $s(\alpha,g)=r(\beta,h)$
then
\begin{align*}
r\bigl((\alpha,g)(\beta,h)\bigr)
&=
r(\alpha\beta,h)
\\&=
(r(\alpha\beta),\eta(\alpha\beta)h)
\\&=(r(\alpha),\eta(\alpha) \eta(\beta)h)
\\&=(r(\alpha),\eta(\alpha) g)\righttext{\quad (since $\eta(\alpha)h=g$)}
\\&=r(\alpha,g)
\end{align*}
\quad \quad \, \,  and
\begin{align*}
s\bigl((\alpha,g)(\beta,h)\bigr)
&=
s(\alpha\beta,h)
\\&=
(s(\alpha\beta),h)
\\&=
(s(\beta), h)
\\&=s(\beta,h),
\end{align*}

(2)
if $s(\alpha,g)=r(\beta,h)$ and $s(\beta,h)=r(\gamma,k)$
then
\begin{align*}
(\alpha,g)\bigl((\beta,h)(\gamma,k)\bigr)
&=
(\alpha,g)(\beta\gamma,k)
\\&=
(\alpha\beta\gamma,k)
\\&\hspace{.1in}\text{(since $g= \eta(\beta)h = \eta(\beta)\eta(\gamma)k = \eta(\beta\gamma) k$)}
\\&=
(\alpha\beta,h)(\gamma,k)
\\&=\bigl((\alpha,g)(\beta,h)\bigr)(\gamma,k),
\end{align*}

(3)
for all $v\in\CC^0,g\in G$ we have
\begin{align*}
r(v,g)
&=
(r(v),\eta(v)g)
\\&=
(v,g) \righttext{(since $\eta(v)=1$)}
\\&=
(s(v), g)
\\&=s(v,g),
\end{align*}
and

(4)
for all $(\alpha,g)\in \DD$
we have
\begin{align*}
r(\alpha,g)(\alpha,g)
&=
(r(\alpha),\eta(\alpha)g)(\alpha,g)
\\&=(r(\alpha)\alpha,g)
\\&=(\alpha,g)
\\&=(\alpha s(\alpha),g)
\\&=
(\alpha,g)(s(\alpha), g)
\\&\hspace{.5in}\text{(since $\eta(s(\alpha)) g=1g=g$)}
\\&=(\alpha,g)s(\alpha,g).
\end{align*}

Now we check left cancellation:
let $(\alpha,g),(\beta,h),(\gamma,k)\in \DD$
with $s(\alpha,g)=r(\beta,h)=r(\gamma,k)$,
and assume that
\[
(\alpha,g)(\beta,h)=(\alpha,g)(\gamma,k).
\]
Then
\[
g=\eta(\beta)h=\eta(\gamma)k
\]
and
\[
\alpha\beta=\alpha\gamma,
\]
so $\beta=\gamma$ since $\CC$ is left cancellative, and $h=\eta(\beta)\inv g = \eta(\gamma)\inv g = k$.

For the last part, suppose that $\CC$ is finitely aligned and consider $(\alpha,g),(\beta,h)\in \DD$.

We first show that
$(\alpha,g)\DD\cap (\beta,h)\DD$
is equal to
\[
\bigl\{\bigl(\gamma, \eta(\gamma)\inv \eta(\alpha)g\bigr):
\gamma\in\alpha\CC\cap \beta\CC)\bigr\}
\]
if
$\alpha\CC\cap \beta\CC\ne\varnothing$
and
$\eta(\alpha)g = \eta(\beta)h$,
and empty otherwise.

If $(\gamma,k)\in (\alpha,g)\DD\cap (\beta,h)\DD$,
then we have
\[
(\gamma,k)=(\alpha,g)(\mu,p)=(\beta,h)(\nu,q)
\]
for some $(\mu,p),(\nu,q)\in \DD$.
Then
\[
g=\eta(\mu)p\midtext{and} h= \eta(\nu)q,
\]
and
\[
(\gamma, k)=
(\alpha\mu,p)=(\beta\nu,q).
\]
This forces 
$k=p=q$, and
\[
\gamma=\alpha\mu=\beta\nu
\in \alpha\CC\cap \beta\CC,
\]
hence
\[\eta(\alpha)g = \eta(\alpha)\eta(\mu)p = \eta(\alpha\mu)p =  \eta(\beta\nu)q = \eta(\beta)\eta(\nu)q = \eta(\beta)h.\]
Thus no such $(\gamma,k)$ exists if 
$\alpha\CC\cap \beta\CC=\varnothing$ or $\eta(\alpha)g\ne \eta(\beta)h$.
Moreover, if 
$\alpha\CC\cap \beta\CC\ne\varnothing$ and $\eta(\alpha)g= \eta(\beta)h$
then
$\gamma\in \alpha\CC\cap \beta\CC$
and
\[
\eta(\gamma)\inv \eta(\alpha)g =\eta(\mu)\inv \eta(\alpha)\inv \eta(\alpha)g = p=k.
\]
On the other hand, if $\eta(\alpha)g= \eta(\beta)h, \gamma\in\alpha\CC\cap \beta\CC$ and $k=\eta(\gamma)\inv \eta(\alpha)g$
then 
\[
\gamma=\alpha\mu=\beta\nu
\]
for some $\mu,\nu\in\CC$,
so that
\begin{align*}
(\gamma,k)
&=(\alpha\mu, \eta(\gamma)\inv \eta(\alpha)g)
\\&=(\alpha\mu, \eta(\mu)\inv g)
\\&=(\alpha,g)(\mu,\eta(\mu)\inv g)
\end{align*}
and
\begin{align*}
(\gamma,k)
&=(\beta\nu, \eta(\gamma)\inv \eta(\beta)h)
\\&=(\beta\nu, \eta(\nu)\inv h)
\\&=(\beta, h)(\nu, \eta(\nu)\inv h),
\end{align*}
hence $(\gamma,k)\in (\alpha,g)\DD\cap (\beta,g)\DD.$

Now assume that $\alpha\CC\cap \beta\CC\ne\varnothing$ and $\eta(\alpha)g = \eta(\beta)h$. Let
 $F$ be a nonempty finite subset of $ \CC$ such that $\alpha\CC\cap \beta\CC = \cup_{\delta\in F} \,\delta\CC$.
 We will show that
 \[ (\alpha,g)\DD\cap (\beta,g)\DD = \bigcup_{\delta \in F} \bigl(\delta, \eta(\delta)\inv\eta(\alpha)g\bigr)\DD,\]
 which will prove that $\DD$ is finitely aligned. 

Consider first $(\gamma, k) \in (\alpha,g)\DD\cap (\beta,g)\DD$. Then, as we have shown above, $\gamma \in \alpha\CC\cap \beta\CC$ and $k=\eta(\gamma)\inv\eta(\alpha)g$. We can now pick $\delta \in F$ such that $\gamma \in \delta\CC$, i.e., $\gamma = \delta x $ for some $x \in \CC$. This gives that $k = \eta(x)\inv\eta(\delta)\inv\eta(\alpha)g$, hence $
\eta(\delta)\inv\eta(\alpha)g = \eta(x)k$, so we get
\[ (\gamma, k) = (\delta x, k) = \bigl(\delta, \eta(\delta)\inv\eta(\alpha)g\bigr) (x, k) \in \bigl(\delta, \eta(\delta)\inv\eta(\alpha)g\bigr)\DD.\]

Conversely, assume that $(\gamma, k) \in\bigl(\delta, \eta(\delta)\inv\eta(\alpha)g\bigr)\DD$ for some $\delta \in F$, therefore we have $(\gamma, k) = \bigl(\delta, \eta(\delta)\inv\eta(\alpha)g\bigr)(x, a)$ for some $(x, a) \in \DD$ such that $\eta(\delta)\inv\eta(\alpha)g=\eta(x) a$. This implies that 
\[ \gamma = \delta x \in \delta\CC \subseteq \alpha\CC\cap \beta\CC \quad \text{and}\]
\[k= a = \eta(x)\inv\eta(\delta)\inv\eta(\alpha)g= \eta(\gamma)\inv\eta(\alpha)g.\]
It follows that $(\gamma, k) \in (\alpha,g)\DD\cap (\beta,g)\DD$, as wanted. 

As a bonus, we remark that one easily proves that $\bigcup_{\delta \in F} \bigl(\delta, \eta(\delta)\inv\eta(\alpha)g\bigr)$ is independent if $F$ is independent, so one gets that
\begin{equation}\label{vee-skew}  (\alpha,g)\vee (\beta,h) = \bigcup_{\delta\in \alpha\vee\beta} \bigl(\delta, \eta(\delta)\inv\eta(\alpha)g\bigr)
\end{equation}
whenever $\eta(\alpha)g=\eta(\beta)h.$
\end{proof}

\begin{lem}\label{act skrew}
For each $g\in G$ and $ (\beta, h)\in \skpr$, set
\[ g\cdot (\beta, h) = 
(\beta, hg\inv) \in \skpr.\]
This gives an action of $G$ on $\skpr$ by category automorphisms. 
\end{lem}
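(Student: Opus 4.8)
The plan is to check directly that each translation map is a category automorphism and that these automorphisms compose as a group action. For $g\in G$ I will write $\lambda_g\:\skpr\to\skpr$ for the map $\lambda_g(\beta,h)=(\beta,hg\inv)$, so that $g\cdot(\beta,h)=\lambda_g(\beta,h)$, and I will show that $g\mapsto\lambda_g$ is a homomorphism from $G$ into the group of category automorphisms of $\skpr$.

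First I would record the purely set-theoretic facts. A one-line computation gives $\lambda_g(\lambda_h(\beta,k))=(\beta,kh\inv g\inv)=(\beta,k(gh)\inv)=\lambda_{gh}(\beta,k)$, together with $\lambda_e=\id_{\skpr}$; hence $\lambda_{g\inv}$ is a two-sided inverse of $\lambda_g$, each $\lambda_g$ is a bijection of the underlying set $\CC\times G$, and $g\mapsto\lambda_g$ is a left action of $G$ by bijections. It therefore remains only to see that each $\lambda_g$ is a morphism of small categories, its inverse $\lambda_{g\inv}$ then being one as well, so that $\lambda_g$ is an automorphism.

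Next I would verify compatibility with the categorical structure of $\skpr$ described in Lemma~\ref{skew finitely aligned}. Since $\lambda_g$ fixes the first coordinate, it carries $(\skpr)^0=\CC^0\times G$ onto itself; and from the formulas for $r$ and $s$ there one reads off $s(\lambda_g(\beta,h))=(s(\beta),hg\inv)=\lambda_g(s(\beta,h))$ and $r(\lambda_g(\beta,h))=(r(\beta),\eta(\beta)hg\inv)=\lambda_g(r(\beta,h))$. For multiplicativity, if $(\alpha,g')(\beta,h)$ is defined, meaning $s(\alpha)=r(\beta)$ and $g'=\eta(\beta)h$, then $g'g\inv=\eta(\beta)(hg\inv)$, so by Definition~\ref{skew def} the product $\lambda_g(\alpha,g')\lambda_g(\beta,h)=(\alpha,g'g\inv)(\beta,hg\inv)$ is defined and equals $(\alpha\beta,hg\inv)=\lambda_g(\alpha\beta,h)=\lambda_g\big((\alpha,g')(\beta,h)\big)$. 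This gives the required compatibility, and combined with the previous paragraph it yields the asserted action by category automorphisms.

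I do not expect a genuine obstacle here: every step is a direct substitution into Definition~\ref{skew def} and the formulas of Lemma~\ref{skew finitely aligned}. The only place that rewards a second glance is the bookkeeping of the inverse — using $hg\inv$ rather than $hg$ in the definition of $g\cdot(\beta,h)$ is exactly what makes $g\mapsto\lambda_g$ a homomorphism rather than an anti-homomorphism, so one should confirm that the second coordinate in the composition $\lambda_g\circ\lambda_h$ collapses to $k(gh)\inv$ and not $k(hg)\inv$.
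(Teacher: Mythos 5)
Your proof is correct and takes the same direct-verification route as the paper, which simply declares the lemma straightforward and displays only the multiplicativity check $g\cdot\big((\beta,h)(\gamma,k)\big)=\big(g\cdot(\beta,h)\big)\big(g\cdot(\gamma,k)\big)$ as a sample. Your write-up is merely more complete, adding the compatibility with $r$, $s$ and the vertex set and the verification that $\lambda_g\circ\lambda_h=\lambda_{gh}$, all of which check out.
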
 
\begin{proof} This is straight-forward.
For example, let $g\in G$, and assume that $\big((\beta, h), (\gamma, k)\big)$ 
is a composable pair in  $\skpr$. 
Then we have
\[  g\cdot \big((\beta, h)(\gamma, k)\big) =  
g\cdot (\beta\gamma, k)
= (\beta\gamma, kg\inv).
\]
Moreover, since 
$ hg = \eta(\gamma)kg$, the pair 
$((\beta, hg),(\gamma, kg))$ is also 
 composable  in $\skpr$, and we get
\[  \big(g\cdot (\beta, h)\big)\big(g\cdot(\gamma, k)\big) =  
(\beta, hg\inv)(\gamma, kg\inv) =
(\beta\gamma, kg\inv).
\]
Thus it follows that $g\cdot \big((\beta, h)(\gamma, k)\big)=\big(g\cdot (\beta, h)\big)\big(g\cdot(\gamma, k)\big)$. 
\end{proof}
Now suppose that $\CC$ is finitely aligned. 
Using the universal property of $\OO(\skpr)$ we get that
the action of $G$ on $\skpr$ induces 
an action $\gamma\:G\curvearrowright \OO(\skpr)$ determined by
\[
\gamma_g(t_{(\beta, h)})=t_{g\cdot (\beta, h)} = 
t_{(\beta, hg\inv)}
\]
for all $g, h\in G$ and $\beta \in \CC$.

On the other hand, we have the dual action $\what\delta\:G\curvearrowright \OO(\CC)\rtimes_\delta G$ characterized by
\[
\what\delta_g(a_h,k)=(a_h,kg\inv)
\]
for all $g,h,k\in G$ and $a_h\in A_h$.

\begin{thm}\label{coaction crossed product isomorphism}
Assume that $\CC$ is finitely aligned. Define $T\:\CC\times_\eta G \to \OO(\CC)\rtimes_\delta G$ by
\[
T_{(\alpha,g)}=(t_\alpha,g).
\]
Then $T$ is a covariant representation of the skew product $\CC\times_\eta G$,
and its integrated form $\theta$ is an isomorphism
\[
\OO(\skpr)\variso \OO(\CC)\rtimes_\delta G.
\]
Moreover, this isomorphism is $\gamma-\what\delta$ equivariant.
\end{thm}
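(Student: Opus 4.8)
The plan is to verify directly that $T$ satisfies the four axioms for a covariant representation of $\skpr$, extract the integrated form $\theta\:\OO(\skpr)\to\OO(\CC)\rtimes_\delta G$ from the universal property, prove $\theta$ is surjective by a spanning argument, prove injectivity by exhibiting an explicit left inverse built from a covariant representation of the coaction $(\OO(\CC),\delta)$ in $M(\OO(\skpr))$, and finally read off equivariance on generators. The key observation underlying everything is that $t_\alpha\in A_{\eta(\alpha)}$ — immediate from \eqref{spectral}, writing $t_\alpha=t_\alpha t_{s(\alpha)}^*\,q$ with $q=t_{s(\alpha)}\in P$ — so that $T_{(\alpha,g)}=(t_\alpha,g)$ is a legitimate generator $(a_g,h)\in A_g\times G$ for the crossed-product calculus \eqref{eqcalculus}. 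With this in hand, axioms (1) and (2) for $T$ are one-line computations with \eqref{eqcalculus}, the composability condition $g=\eta(\beta)h$ in $\skpr$ matching exactly the requirement $h=k\ell$ in the product formula. For axiom (3) I would first compute $T_{(\alpha,g)}T_{(\alpha,g)}^*=(t_\alpha t_\alpha^*,\eta(\alpha)g)$, observe that this is orthogonal to $T_{(\beta,h)}T_{(\beta,h)}^*=(t_\beta t_\beta^*,\eta(\beta)h)$ unless $\eta(\alpha)g=\eta(\beta)h$ — precisely the case in which $(\alpha,g)\vee(\beta,h)$ is nonempty, by the computation in the proof of \lemref{skew finitely aligned} — and in that case combine axiom (3) for $t$ in $\OO(\CC)$ with \eqref{vee-skew}, using that $j_G(\Chi_k)$ commutes with $j_A(A_e)$ so that joins of projections pass through multiplication by $j_G(\Chi_{\eta(\alpha)g})$. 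Axiom (4) is similar, once one notes that the finite exhaustive sets at a vertex $(v,g)\in\CC^0\times G$ are exactly the sets $\{(\alpha,\eta(\alpha)\inv g):\alpha\in F_0\}$ for $F_0$ a finite exhaustive set at $v$. The universal property of $\OO(\skpr)$ then delivers $\theta$ with $\theta(t_{(\alpha,g)})=(t_\alpha,g)$.

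For surjectivity, $\theta(\OO(\skpr))$ is a $C^*$-subalgebra containing every $(t_\alpha,g)$ and its adjoint $(t_\alpha^*,\eta(\alpha)g)$; multiplying these out inside $\OO(\CC)\rtimes_\delta G$ using \eqref{eqcalculus} one sees it contains $(t_\alpha t_\beta^*\,q,m)$ for all $\alpha,\beta\in\CC$, $q\in P$ and $m\in G$. By \eqref{spectral} we have $A_g=\clspn\{t_\alpha t_\beta^*\,q:\eta(\alpha)\eta(\beta)\inv=g,\ q\in P\}$, and $\OO(\CC)\rtimes_\delta G=\clspn\{(a_g,h):g,h\in G,\ a_g\in A_g\}$, so these elements span $\OO(\CC)\rtimes_\delta G$ and $\theta$ is onto.

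For injectivity I would construct a left inverse. Define $\mu\:c_0(G)\to M(\OO(\skpr))$ by $\mu(\Chi_g)=\sum_{v\in\CC^0}t_{(v,g)}$, a strictly convergent sum of mutually orthogonal projections; this is a nondegenerate homomorphism since $\sum_{g\in G}\mu(\Chi_g)=\sum_{v\in\CC^0,\,g\in G}t_{(v,g)}=1$ in $M(\OO(\skpr))$. Next, $\alpha\mapsto\sum_{g\in G}t_{(\alpha,g)}$ is a covariant representation of $\CC$ in $M(\OO(\skpr))$ — its axioms are verified exactly as axioms (1)--(4) for $T$ were, again via \eqref{vee-skew} — so it has an integrated form $\pi\:\OO(\CC)\to M(\OO(\skpr))$. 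A short computation with the composability conditions in $\skpr$ gives $\pi(t_\alpha)\mu(\Chi_g)=t_{(\alpha,g)}=\mu(\Chi_{\eta(\alpha)g})\pi(t_\alpha)$ for all $\alpha\in\CC$ and $g\in G$, so by \lemref{cov gen lem} (applied with $S=G$ and $D_h=\{t_\alpha:\alpha\in\CC,\ \eta(\alpha)=h\}$, whose union generates $\OO(\CC)$) the pair $(\pi,\mu)$ is a covariant representation of $(\OO(\CC),\delta)$. Let $\psi=\pi\times\mu\:\OO(\CC)\rtimes_\delta G\to M(\OO(\skpr))$; then $\psi((t_\alpha,g))=\psi(j_A(t_\alpha)j_G(\Chi_g))=\pi(t_\alpha)\mu(\Chi_g)=t_{(\alpha,g)}$, so $\psi\circ\theta$ fixes each generator $t_{(\alpha,g)}$ of $\OO(\skpr)$, whence $\psi\circ\theta=\id$. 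Together with surjectivity this shows $\theta$ is an isomorphism (and $\psi=\theta\inv$).

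Finally, equivariance is immediate on generators: $\theta(\gamma_g(t_{(\beta,h)}))=\theta(t_{(\beta,hg\inv)})=(t_\beta,hg\inv)=\what\delta_g((t_\beta,h))=\what\delta_g(\theta(t_{(\beta,h)}))$, and since $\gamma_g$ and $\what\delta_g$ are automorphisms this gives $\theta\circ\gamma_g=\what\delta_g\circ\theta$. I expect the main obstacle to be the assertion used for the inverse, namely that $\alpha\mapsto\sum_{g}t_{(\alpha,g)}$ is genuinely a covariant representation of $\CC$ in $M(\OO(\skpr))$: one must check that these strict sums are well defined and that $\sum_g(\,\cdot\,)$ interchanges with the joins $\bigvee$ appearing in axioms (3) and (4). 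This rests on the fact that, for fixed $\alpha,\beta\in\CC$, the projections $t_{(\delta,k)}t_{(\delta,k)}^*$ attached to the various sets $(\alpha,g)\vee(\beta,h)$ are mutually orthogonal for distinct values of the total degree $\eta(\delta)k=\eta(\alpha)g$ (so that joins over distinct $g$ become orthogonal sums), in combination with \eqref{vee-skew}.
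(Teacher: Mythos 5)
Your proposal is correct and follows essentially the same route as the paper: the same axiom checks for $T$ via the crossed-product calculus and \eqref{vee-skew}, and the same inverse built from the covariant pair $\bigl(\pi,\mu\bigr)$ with $\pi(t_\alpha)=\sum_{g}t_{(\alpha,g)}$ and $\mu(\Chi_g)=\sum_{v}t_{(v,g)}$, verified via \lemref{cov gen lem}. The only cosmetic difference is that you establish surjectivity by a spanning argument using \eqref{spectral} and then conclude from the one-sided identity $\psi\circ\theta=\id$, whereas the paper checks both composites on generators; these are interchangeable.
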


\begin{proof}
For this proof let $\DD=\skpr$.
We check the axioms for $T$ to be a covariant representation:

(1)
For all $(\alpha,g)\in \DD$,
\begin{align*}
T_{(\alpha,g)}^* T_{(\alpha,g)}
&=(t_\alpha,g)^*(t_\alpha,g)
\\&=(t_\alpha^*,\eta(\alpha) g)(t_\alpha,g)
\\&=(t_\alpha^* t_\alpha,g)
\\&=(t_{s(\alpha)},g)
\\&=T_{(s(\alpha),g)}
\\&=T_{s(\alpha,g)},
\end{align*}

(2)
if $s(\alpha,g)=r(\beta,h)$, then
\begin{align*}
T_{(\alpha,g)} T_{(\beta,h)}
&=(t_\alpha,g)(t_\beta,h)
\\&=(t_\alpha t_\beta,h)
\\&=(t_{\alpha\beta},h)
\\&=T_{(\alpha\beta,h)}
\\&=T_{(\alpha,g)(\beta,h)},
\end{align*}
and

(3)
for all $(\alpha,g),(\beta,h)\in \DD$,
\begin{align*}
&T_{(\alpha,g)} T_{(\alpha,g)}^* T_{(\beta,h)} T_{(\beta,h)}^*
\\&\quad=(t_\alpha,g)(t_\alpha^*,\eta(\alpha)g)(t_\beta,h)(t_\beta^*,\eta(\beta) h)
\\&\quad=(t_\alpha t_\alpha^*,\eta(\alpha)g)(t_\beta t_\beta^*,\eta(\beta)h)
\\&\quad=\begin{cases}(t_\alpha t_\alpha^* t_\beta  t_\beta^*,\eta(\alpha)g)\quad \text{if} \ \eta(\alpha)g=\eta(\beta)h,\\ \ 0 \quad \quad \text{otherwise;}
\end{cases}
\end{align*}
if 
$\eta(\alpha)g\neq\eta(\beta)h$ then $(\alpha,g)\vee (\beta,h)$ is empty, so  we get 
\[ T_{(\alpha,g)} T_{(\alpha,g)}^* T_{(\beta,h)} T_{(\beta,h)}^*= 0 = \bigvee_{(\gamma,k)\in (\alpha,g)\vee (\beta,h)} T_{(\gamma,k)} T_{(\gamma,k)}^*;\]
on the other hand, if 
$\eta(\alpha)g=\eta(\beta)h$ then,
using (\ref{vee-skew}), we get
\begin{align*}
T_{(\alpha,g)} T_{(\alpha,g)}^* T_{(\beta,h)} T_{(\beta,h)}^*&=
(t_\alpha t_\alpha^* t_\beta  t_\beta^*,\eta(\alpha) g)
\\&=
\left(\bigvee_{\delta\in \alpha\vee \beta} t_\delta t_\delta^*, \eta(\alpha)g\right)
\\&=
\bigvee_{\delta\in \alpha\vee \beta} \bigl(t_\delta t_\delta^*,\eta(\alpha)g\bigr)
\\&=
\bigvee_{\delta\in \alpha\vee \beta} \bigl(t_\delta,\eta(\delta)\inv\eta(\alpha) g\bigr)\bigl(t_\delta^*,\eta(\alpha) g\bigr)
\\&=
\bigvee_{\delta \in \alpha\vee \beta} T_{(\delta,\eta(\delta)\inv\eta(\alpha) g)} T_{(\delta,\eta(\delta)\inv\eta(\alpha) g)}^*
\\&=\bigvee_{(\gamma,k)\in (\alpha,g)\vee (\beta,h)} T_{(\gamma,k)} T_{(\gamma,k)}^*,
\end{align*}
and

(4)
if $F' \subseteq (v, g)\DD$ is a finite exhaustive set at $(v,g) \in \DD^0$, then one checks without too much trouble that $F'=\cup_{\alpha \in F} (\alpha, \eta(\alpha)\inv g)$ where $F\subseteq v\CC$ is a finite exhaustive set at $v \in \CC^0$, so we get that
\begin{align*}
T_{(v,g)}
&=(t_v,g)
\\&=\left(\bigvee_{\alpha\in F} t_\alpha t_\alpha^*,g\right)
\\&=\bigvee_{\alpha\in F} (t_\alpha t_\alpha^*,g)
\\&=
\bigvee_{\alpha\in F} (t_\alpha,\eta(\alpha)\inv g)(t_\alpha^*, g)
\\&=
\bigvee_{\alpha\in F} T_{(\alpha,\eta(\alpha)\inv g)} T_{(\alpha,\eta(\alpha)\inv g)}^*
\\&=
\bigvee_{(\alpha,k)\in F'} T_{(\alpha,k)} T_{(\alpha, k)}^*,
\end{align*}
as desired.

We will show that $\theta$ is an isomorphism by constructing an inverse homomorphism
\[
\Theta\:\OO(\CC)\rtimes_\delta G\to \OO(\DD),
\]
and we will get $\Theta$ as the integrated form $\pi\times\phi$ for a covariant representation $(\pi,\phi)$ of the coaction $(\OO(\CC),\delta)$.
Define $Q\:\CC\to M(\OO(\DD))$ by
\[
Q(\alpha)=\sum_{g\in G}t_{(\alpha,g)}.
\]
We must check that the above sum converges strictly in the multiplier algebra $M(\OO(\DD))$ to a partial isometry,
and this follows since if $g\neq h$ the partial isometries $t_{(\alpha,g)}$ and $t_{(\alpha,h)}$ have orthogonal range projections and
orthogonal domain projections.
We will verify that $Q$ is a covariant representation of $\CC$.

(1)
For all $\alpha\in \CC$,
\begin{align*}
Q_\alpha^* Q_\alpha
&=\sum_{g,h\in G}t_{(\alpha,g)}^*t_{(\alpha,h)}
\\&=\sum_{g\in G}t_{(\alpha,g)}^*t_{(\alpha,g)}
\\&=\sum_{g\in G}t_{s(\alpha,g)}
\\&=
\sum_{g\in G}t_{(s(\alpha),g)
}
\\&=Q_{s(\alpha)},
\end{align*}

(2)
if $s(\alpha)=r(\beta)$,
\begin{align*}
Q_\alpha Q_\beta
&=\sum_{g,h\in G}t_{(\alpha,g)}t_{(\beta,h)}
\\&=
\sum_{h\in G}t_{(\alpha,\eta(\beta)h)(\beta,h)}
\\&=
\sum_{h\in G}t_{(\alpha\beta,h)}
\\&=Q_{\alpha\beta},
\end{align*}

(3)
for all $\alpha,\beta\in\CC$,
\begin{align*}
Q_\alpha Q_\alpha^* Q_\beta Q_\beta^*
&=\sum_{g,h,\ell,m\in G}t_{(\alpha,g)}t_{(\alpha,h)}^*t_{(\beta,\ell)}t_{(\beta,m)}^*
\\&=\sum_{g,\ell\in G}t_{(\alpha,g)}t_{(\alpha,g)}^*t_{(\beta,\ell)}t_{(\beta,\ell)}^*
\\&=\sum_{g,\ell\in G}\bigvee_{(\gamma,k)\in (\alpha,g)\vee (\beta,\ell)}
t_{(\gamma,k)}t_{(\gamma,k)}^*
\\&
=\sum_{g\in G}\bigvee_{\delta\in \alpha\vee\beta}t_{(\delta,\eta(\delta)\inv\eta(\alpha)g)}t_{(\delta,\eta(\delta)\inv\eta(\alpha)g)}^*
\\&
\overset{*}=\bigvee_{\delta\in \alpha\vee\beta}\sum_{g\in G}t_{(\delta,\eta(\delta)\inv\eta(\alpha)g)}t_{(\delta,\eta(\delta)\inv\eta(\alpha)g)}^*
\\&=\bigvee_{\delta\in \alpha\vee\beta}\sum_{a\in G}t_{(\delta,a)}t_{(\delta,a)}^*
\\&\overset{**}=\bigvee_{\delta\in \alpha\vee \beta}\sum_{a,b\in G}
t_{(\delta,a)} t_{(\delta,b)}^*
\\&=\bigvee_{\delta\in \alpha\vee \beta}Q_\delta Q_\delta^*,
\end{align*}
where the equality at *
holds 
because 
the range projections of $t_{(\delta,\eta(\delta)\inv\eta(\alpha)g)}$ and $t_{(\delta',\eta(\delta')\inv\eta(\alpha)g)}$
are orthogonal for all $\delta,\delta'\in \alpha\vee \beta$, $\delta\neq \delta'$,
and all the projections commute,
while the equality at ** holds because if $a\neq b$ then the partial isometries 
$t_{(\delta, a)}$ and $t_{(\delta, b)}$ have orthogonal domain projections,
and

(4)
for every finite exhaustive set $F\subseteq v\CC$ at $v\in \CC^0$,
and for each $g\in G$ the set 
$F_g'=\cup_{\alpha \in F} (\alpha, \eta(\alpha)\inv g) \subseteq (v, g)\DD$ is finite exhaustive at $(v,g) \in \DD^0$, so we get that
\begin{align*}
Q_v
&=\sum_{g\in G}t_{(v,g)}
\\&=
\sum_{g\in G}\bigvee_{(\gamma,k)\in F'_g}t_{(\gamma,k)}t_{(\gamma,k)}^*
\\&=
\sum_{g\in G}\bigvee_{\alpha\in F}t_{(\alpha,\eta(\alpha)\inv g)}t_{(\alpha,\eta(\alpha)\inv g)}^*
\\&\overset{*}=
\bigvee_{\alpha\in F}\sum_{g\in G}t_{(\alpha,\eta(\alpha)\inv g)}t_{(\alpha,\eta(\alpha)\inv g)}^*
\\&=\bigvee_{\alpha\in F}\sum_{a\in G}t_{(\alpha,a)}t_{(\alpha,a)}^*
\\&=
\bigvee_{\alpha\in F}\sum_{a,b\in G}t_{(\alpha,a)}t_{(\alpha,b)}^*
\\&=\bigvee_{\alpha\in F}Q_\alpha Q_\alpha^*,
\end{align*}
where the equality at * holds for the same reason as in the preceding verification of property (3).

Thus we can define a homomorphism $\pi\:\OO(\CC)\to \OO(\DD)$
as the integrated form of $Q$.
Moreover, $\pi$ is nondegenerate because
$\pi(\OO(\CC))\OO(\DD)$
includes all the projections
$t_{(v,g)}$ for $(v,g)\in \CC^0\times G=\DD^0$.

Next we show that there is a unique homomorphism $\phi\:c_0(G)\to M(\OO(\DD))$ such that
\begin{equation}\label{phi}
\phi(\Chi_g)=\sum_{v\in \CC^0}t_{(v,g)}\righttext{for}g\in G.
\end{equation}
Since the projections $t_{(v,g)}$ for $g\in G$ are mutually orthogonal,
the above sum converges in the strict topology
to a projection in
$M(\OO(\DD))$.
Since $c_0(G)$ is generated by the pairwise orthogonal projections $\Chi_g$ for $g\in G$,
and since for $g\ne h$
the projections $\phi(\Chi_g)$ and $\phi(\Chi_h)$ are orthogonal,
formula \eqref{phi} uniquely determines a homomorphism $\phi\:c_0(G)\to M(\OO(\DD))$.
Moreover, $\phi$ is nondegenerate because $\phi(c_0(G))\OO(\DD)$
includes all the projections
$t_{(v,g)}$ for $(v,g)\in \CC^0\times G=\DD^0$.

We verify that the pair $(\pi,\phi)$ is a covariant representation of the coaction $(\OO(\CC),\delta)$.
For $\alpha\in\CC$ and $g\in G$,
we have
\begin{align*}
\pi(t_\alpha)\phi(\Chi_g)
&=\sum_{\substack{h\in G\\v\in \CC^0}}t_{(\alpha,h)}t_{(v,g)}.
\end{align*}
In the sum, all terms are 0
except when 
$h=\eta(v)g=g$
and $v=s(\alpha)$,
and the sum gives
\[
\pi(t_\alpha)\phi(\Chi_g)=
t_{(\alpha,g)}.
\]
On the other hand,
\[
\phi(\Chi_{\eta(\alpha)g})\pi(t_\alpha)
=\sum_{\substack{v\in \CC^0\\h\in G}}t_{(v,\eta(\alpha)g)}t_{(\alpha,h)},
\]
in which all terms are 0
except when
$\eta(\alpha)g=\eta(\alpha)h$, \text{i.e.,} $h=g$,
and $v=r(\alpha)$,
and the sum gives
\[
\phi(\Chi_{\eta(\alpha)g})\pi(t_\alpha)
=t_{(\alpha,g)}.
\]
Thus
\[
\pi(t_\alpha)\phi(\Chi_g)=\phi(\Chi_{\eta(\alpha)g})\pi(t_\alpha).
\]
Since the set $\{t_\alpha:\alpha\in\CC\}$ is a set of elements of the spectral subspaces of the coaction $\delta$ that generates the $C^*$-algebra $\OO(\CC)$,
it now follows from \lemref{cov gen lem}
that the pair $(\pi,\phi)$ is a covariant representation of $(\OO(\CC),\delta)$.

Now we check that the integrated form $\Theta=\pi\times\phi$ is an inverse of $\theta$.
First, for all $(\alpha,g)\in \DD$,
\begin{align*}
\Theta\circ\theta(t_{(\alpha,g)})
&=\Theta(t_\alpha,g)
\\&=
(\pi\times\phi)(j_{\OO(\CC)}(t_\alpha)j_G(\Chi_g))
\\&=
\pi(t_\alpha)\phi(\Chi_g)
\\&=t_{(\alpha,g)},
\end{align*}
so $\Theta\circ\theta=\id_{\OO(\DD)}$.

On the other hand, for all $\alpha\in\CC$,
\begin{align*}
\theta\circ\Theta(j_{\OO(\CC)}(t_\alpha))
&=\theta(\pi(t_\alpha))
\\&=\sum_{g\in G}\theta(t_{(\alpha,g)})
\\&=\sum_{g\in G}T_{(\alpha,g)}
\\&=\sum_{g\in G}(t_\alpha,g)
\\&=
\sum_{g\in G}j_{\OO(\CC)}(t_\alpha)j_G(\Chi_g)
\\&=j_{\OO(\CC)}(t_\alpha)
\righttext{(since $\sum_{g\in G}\Chi_g=1$ strictly in $M(c_0(G))$),}
\end{align*}
and for all $g\in G$,
\begin{align*}
\theta\circ\Theta(j_G(\Chi_g))
&=\theta(\phi(\Chi_g))
\\&=\sum_{v\in \CC^0}\theta(t_{(v,g)})
\\&=\sum_{v\in \CC^0}T_{(v,g)}
\\&=\sum_{v\in \CC^0}(t_v,g)
\\&=
\sum_{v\in \CC^0}j_{\OO(\CC)}(t_v)j_G(\Chi_g)
\\&=j_G(\Chi_g)
\righttext{(since $\sum_{v\in \CC^0}t_v=1$ strictly in $M(\OO(\CC))$),}
\end{align*}
so $\theta\circ\Theta=\id_{\OO(\CC)\rtimes_\delta G}$.

Finally, we check the $\gamma-\what\delta$ equivariance.
For $g\in G$, we must show that $\theta\circ\gamma_g=\what\delta_g\circ\theta$,
and
it suffices to check this on a generator $t_{(\alpha,h)}$:
\begin{align*}
(\theta\circ\gamma_g)(t_{(\alpha,h)})
&=\theta(t_{(\alpha,hg\inv)})
\\&=(t_\alpha,hg\inv)
\\&=\what\delta_g(t_\alpha,h)
\\&=(\what\delta_g\circ\theta)(t_{(\alpha,h)}).
\qedhere
\end{align*}
\end{proof}

When $G\act \DD$ is an action of $G$ on a small category $\DD$ by category automorphisms, one can form the semi-direct product $\DD \rtimes G$, which is a finitely aligned LCSC if $\DD$ is. This is not difficult to show directly, but we note that this follows from \cite[Propositions 4.6 and 4.13]{bkqs} by regarding $\DD \rtimes G$ as the Zappa-Sz{\' e}p product of $\DD$ by $G$ with respect to the trivial category cocycle
$\varphi_0\: \DD\times G \to G$, given by $\varphi_0(\alpha, g) = g$ for all $(\alpha, g)\in \DD\times G$.
\begin{cor}\label{dcp}
Assume that $\CC$ is finitely aligned and let $(\CC \times_\eta G)\rtimes G$ denote the finitely aligned LCSC associated with the canonical action $G\act (\CC \times_\eta G)$. Then 
\[\OO((\CC \times_\eta G)\rtimes  G) \simeq \OO(\CC) \otimes \KK(\ell^2(G)).\] 
\end{cor}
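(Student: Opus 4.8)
The plan is to identify $\OO\bigl((\skpr)\rtimes G\bigr)$ with an iterated crossed product and then invoke \thmref{coaction crossed product isomorphism} and \thmref{maximal}. The one extra ingredient needed is the following general fact, which I would establish separately: if $G$ acts by automorphisms on a finitely aligned LCSC $\DD$, inducing (via the universal property of $\OO(\DD)$) an action — again denoted $\gamma$ — of $G$ on $\OO(\DD)$, then $\OO(\DD\rtimes G)\cong\OO(\DD)\rtimes_\gamma G$. Granting this for $\DD=\skpr$ (with $\gamma$ the action considered just before \thmref{coaction crossed product isomorphism}), the corollary follows from the chain
\[
\OO\bigl((\skpr)\rtimes G\bigr)\;\cong\;\OO(\skpr)\rtimes_\gamma G\;\cong\;\bigl(\OO(\CC)\rtimes_\delta G\bigr)\rtimes_{\what\delta} G\;\cong\;\OO(\CC)\otimes\KK(\ell^2(G)),
\]
where the first isomorphism is the general fact, the second is obtained by applying the full crossed-product functor to the $\gamma-\what\delta$ equivariant isomorphism $\theta\:\OO(\skpr)\variso\OO(\CC)\rtimes_\delta G$ of \thmref{coaction crossed product isomorphism}, and the third is the canonical map $\Phi$, which is an isomorphism precisely because $\delta$ is maximal by \thmref{maximal}.

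For the general fact, I would argue by constructing mutually inverse integrated forms, in the same spirit as the proof of \thmref{coaction crossed product isomorphism}. Write $t\:\DD\to\OO(\DD)$ and $t'\:\DD\rtimes G\to\OO(\DD\rtimes G)$ for the universal covariant representations. In one direction, letting $(i_{\OO(\DD)},i_G)$ be the canonical covariant pair for $(\OO(\DD),\gamma)$ in $M(\OO(\DD)\rtimes_\gamma G)$, one checks that $(\alpha,g)\mapsto i_{\OO(\DD)}(t_\alpha)\,i_G(g)$ is a covariant representation of $\DD\rtimes G$ and integrates it to a homomorphism $\OO(\DD\rtimes G)\to\OO(\DD)\rtimes_\gamma G$. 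In the other direction, composing $t'$ with the canonical embedding $\DD\hookrightarrow\DD\rtimes G$ (sending a vertex $v$ to itself) gives a covariant representation of $\DD$ in $\OO(\DD\rtimes G)$, and for each $g\in G$ the strictly convergent sum $u_g=\sum_{v\in\DD^0}t'_{(v,g)}$ is a unitary in $M(\OO(\DD\rtimes G))$ with $g\mapsto u_g$ a unitary representation of $G$ implementing the induced action; one checks this pair is covariant for $(\OO(\DD),\gamma)$ and integrates it to a homomorphism $\OO(\DD)\rtimes_\gamma G\to\OO(\DD\rtimes G)$. Evaluating on generators shows that the two homomorphisms are mutually inverse. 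Alternatively, as noted just before the statement, $\DD\rtimes G$ is the Zappa--Sz\'ep product of $\DD$ by $G$ with respect to the trivial category cocycle $\varphi_0$, so this identification can also be read off from the description of Cuntz--Krieger algebras of Zappa--Sz\'ep products in \cite{bkqs}.

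I expect essentially all of the work to lie in this general fact: keeping the range and source maps of $\DD\rtimes G$ straight, and verifying axioms (3) and (4) (the join relation and the finite-exhaustive-set condition) for the two candidate representations — in the same bookkeeping style as relation \eqref{vee-skew} and the exhaustive-set computations in the proofs of \lemref{skew finitely aligned} and \thmref{coaction crossed product isomorphism} — together with confirming that $g\mapsto u_g$ is a genuine unitary representation implementing $\gamma$. Once that identification is in hand, the displayed chain of isomorphisms is immediate and completes the proof.
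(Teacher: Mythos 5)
Your proposal is correct and follows essentially the same route as the paper: the paper's proof is exactly the chain $\OO((\skpr)\rtimes G)\simeq\OO(\skpr)\rtimes_\gamma G\simeq(\OO(\CC)\rtimes_\delta G)\rtimes_{\what\delta}G\simeq\OO(\CC)\otimes\KK(\ell^2(G))$, using the $\gamma-\what\delta$ equivariance from Theorem~\ref{coaction crossed product isomorphism} and maximality from Theorem~\ref{maximal}. The only difference is that for the first isomorphism the paper simply cites \cite[Remark 5.6 c)]{bkqs} (viewing $\DD\rtimes G$ as a Zappa--Sz\'ep product with trivial cocycle) rather than reproving it, which is precisely the alternative you mention at the end of your second paragraph.
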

\begin{proof} 
It follows from \cite[Remark 5.6 c)]{bkqs} that $\OO((\CC \times_\eta G)\rtimes  G) \simeq \OO(\skpr) \rtimes_\gamma G$. Thus, using Theorem~\ref{maximal}, Theorem~\ref{coaction crossed product isomorphism},
and Katayama crossed-product duality, we get
\[\OO((\CC \times_\eta G)\rtimes  G)  \simeq (\OO(\CC)\rtimes_\delta G) \rtimes_{\what\delta} G \simeq \OO(\CC) \otimes \KK(\ell^2(G)).\qedhere\]
\end{proof} 

\begin{cor}\label{itd}
Assume that $\DD$ is a finitely aligned LCSC and $G$ acts by category automorphisms on $\DD$.
Then there is a cocycle $\eta_0$ on the semi-direct-product $\DD\rtimes G$ given by
\[
\eta_0(\alpha,g)=g,
\]
and
\[
\OO\bigl((\DD\rtimes G)\times_{\eta_0} G\bigr)\simeq
\OO(\DD)\otimes \KK(\ell^2(G)).
\]
\end{cor}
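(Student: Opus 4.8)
The plan is to mimic the proof of \corref{dcp}, reducing the computation to an iterated crossed product. First I would check that $\eta_0$ is indeed a cocycle, i.e.\ a functor $\DD\rtimes G\to G$: in the semidirect product $\DD\rtimes G$ the $G$-component of a composite of two morphisms is the product of their $G$-components, and the identity morphisms have $G$-component $e$, so $(\alpha,g)\mapsto g$ respects composition and units. Recalling that $\DD\rtimes G$ is a finitely aligned LCSC whenever $\DD$ is, \lemref{skew finitely aligned} shows that $(\DD\rtimes G)\times_{\eta_0}G$ is finitely aligned too, so all the $C^*$-algebras below are defined. Applying \propref{is coaction} and \thmref{coaction crossed product isomorphism} with $\CC$ replaced by $\DD\rtimes G$ and $\eta$ by $\eta_0$, I obtain a coaction $\delta$ of $G$ on $\OO(\DD\rtimes G)$, determined on generators by $\delta(t_{(\alpha,g)})=t_{(\alpha,g)}\otimes g$, together with an isomorphism
\[
\OO\bigl((\DD\rtimes G)\times_{\eta_0}G\bigr)\simeq\OO(\DD\rtimes G)\rtimes_\delta G.
\]

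Next I would invoke \cite[Remark~5.6~c)]{bkqs}, exactly as in the proof of \corref{dcp}, to obtain an isomorphism $\Psi\:\OO(\DD\rtimes G)\variso\OO(\DD)\rtimes_\gamma G$, where $\gamma$ denotes the action of $G$ on $\OO(\DD)$ induced by the given action $G\act\DD$. The crux is to show that $\Psi$ intertwines $\delta$ with the dual coaction $\what\gamma$ on $\OO(\DD)\rtimes_\gamma G$. This I would do on generators: the isomorphism of \cite[Remark~5.6~c)]{bkqs} carries the ``$\DD$-part'' $t_{(\alpha,e)}$ of $\OO(\DD\rtimes G)$ into the canonical copy of $\OO(\DD)$ and the implementing unitaries $u_g=\sum_{v\in\DD^0}t_{(v,g)}$ into the canonical unitary representation of $G$, so that $t_{(\alpha,g)}=t_{(\alpha,e)}u_g$ corresponds to $i_{\OO(\DD)}(t_\alpha)\,i_G(g)$; since $\what\gamma$ is trivial on $i_{\OO(\DD)}(\OO(\DD))$ and satisfies $\what\gamma(i_G(g))=i_G(g)\otimes g$, the element $\Psi(t_{(\alpha,g)})$ is sent by $\what\gamma$ to $\Psi(t_{(\alpha,g)})\otimes g$, matching $(\Psi\otimes\id)\circ\delta(t_{(\alpha,g)})$. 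As the $t_{(\alpha,g)}$ generate $\OO(\DD\rtimes G)$, it follows that $(\Psi\otimes\id)\circ\delta=\what\gamma\circ\Psi$.

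Combining the two isomorphisms gives
\[
\OO\bigl((\DD\rtimes G)\times_{\eta_0}G\bigr)\simeq\bigl(\OO(\DD)\rtimes_\gamma G\bigr)\rtimes_{\what\gamma}G,
\]
and the Imai--Takai duality theorem for the dual coaction on a full crossed product by an action identifies the right-hand side with $\OO(\DD)\otimes\KK(\ell^2(G))$, which finishes the proof. I expect the main obstacle to be precisely the identification of $\delta$ with $\what\gamma$ under $\Psi$: it hinges on making the isomorphism of \cite[Remark~5.6~c)]{bkqs} sufficiently explicit on the generators $t_{(\alpha,g)}$, after which everything is routine bookkeeping. Note, by contrast with \corref{dcp}, that no appeal to maximality (hence to \thmref{maximal}) is needed here, since Imai--Takai duality for crossed products by the dual coaction of an action is unconditional.
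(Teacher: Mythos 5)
Your proposal is correct and follows essentially the same route as the paper: identify $\OO((\DD\rtimes G)\times_{\eta_0}G)$ with $\OO(\DD\rtimes G)\rtimes_{\delta}G$ via \thmref{coaction crossed product isomorphism}, transport $\delta$ to the dual coaction on $\OO(\DD)\rtimes G$ through the equivariant isomorphism of \cite[Remark~5.6(c)]{bkqs}, and finish with Imai--Takai duality. The paper leaves the equivariance check as ``easy to check'' where you spell it out on generators, and your closing observation that no maximality (hence no appeal to \thmref{maximal}) is needed here, in contrast with \corref{dcp}, accurately reflects what the paper does.
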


\begin{proof}
The first statement can be verified using routine computations.
Let $\delta_0$ be the coaction of $G$ on $\OO(\DD\rtimes G)$ associated with $\eta_0$, according to \propref{is coaction}.
Also let $\beta$ be the action of $G$ on $\OO(\DD)$
arising from the action $G\curvearrowright \DD$.
Then
it is easy to check that
the isomorphism $\OO(\DD\rtimes G)\simeq \OO(\DD)\rtimes_{\beta} G$
from \cite[Remark~5.6(c)]{bkqs}
is $\delta_0-\what{\beta}$ equivariant.
Thus, 
\thmref{coaction crossed product isomorphism}
and
Imai-Takai crossed-product duality give
\begin{align*}
\OO\bigl((\DD\rtimes G)\times_{\eta_0} G\bigr)
&\simeq \OO(\DD\rtimes G)\rtimes_{\delta_0} G
\\&\simeq \bigl(\OO(\DD)\rtimes_\beta G\bigr)\rtimes_{\what{\beta}} G
\\&\simeq \OO(\DD)\otimes \KK(\ell^2(G)).
\qedhere
\end{align*}
\end{proof}

We also include the following result about gauge actions on $\OO(\CC)$, which is an immediate consequence of Theorem~\ref{coaction crossed product isomorphism} combined with \cite[Appendix A]{enchilada}.

\begin{cor}\label{gauge}
Assume that $\CC$ is finitely aligned and $G$ is  abelian. Let $\what G$ denote the dual group and let $\alpha$ denote the action of $\what G$ on $\OO(\CC)$ corresponding to $\delta$, which is determined by
$\alpha_\gamma(t_\alpha) = \gamma(\eta(\alpha)) t_\alpha$ for $\gamma \in \what G$ and $\alpha \in \CC$.  Then
$\OO(\CC \times_\eta G) \simeq \OO(\CC) \rtimes_\alpha \what G$. 
  \end{cor}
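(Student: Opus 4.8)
The plan is to deduce this from Theorem~\ref{coaction crossed product isomorphism} by translating the coaction crossed product $\OO(\CC)\rtimes_\delta G$ into an ordinary action crossed product $\OO(\CC)\rtimes_\alpha\what G$ via the standard duality between coactions of $G$ and actions of $\what G$ when $G$ is abelian. First I would recall from \cite[Appendix~A]{enchilada} (or the well-known discrete abelian case) that for an abelian group $G$ with Pontryagin dual $\what G$, there is a canonical correspondence between coactions $\delta$ of $G$ on a $C^*$-algebra $A$ and (strongly continuous) actions $\alpha$ of $\what G$ on $A$, determined on spectral subspaces by $\alpha_\gamma|_{A_g}=\gamma(g)\,\id$; moreover under this correspondence $A\rtimes_\delta G$ is naturally isomorphic to $A\rtimes_\alpha\what G$. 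Applying this with $A=\OO(\CC)$ and $\delta$ the coaction of \propref{is coaction}, and using that the spectral subspace description of $\delta$ (and the fact that $t_\alpha\in A_{\eta(\alpha)}$, visible already in the proof of \propref{is coaction}) gives precisely $\alpha_\gamma(t_\alpha)=\gamma(\eta(\alpha))\,t_\alpha$, I obtain an action $\alpha$ of $\what G$ on $\OO(\CC)$ with $\OO(\CC)\rtimes_\delta G\simeq\OO(\CC)\rtimes_\alpha\what G$.

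Next I would simply chain this with \thmref{coaction crossed product isomorphism}, which gives $\OO(\CC\times_\eta G)\simeq\OO(\CC)\rtimes_\delta G$, to conclude
\[
\OO(\CC\times_\eta G)\;\simeq\;\OO(\CC)\rtimes_\delta G\;\simeq\;\OO(\CC)\rtimes_\alpha\what G,
\]
which is exactly the asserted isomorphism. It remains only to confirm that the action $\alpha$ produced by the abstract duality is genuinely the one described in the statement by the formula $\alpha_\gamma(t_\alpha)=\gamma(\eta(\alpha))\,t_\alpha$; this is immediate from the fact, noted in the proof of \propref{is coaction}, that $\delta(t_\alpha)=t_\alpha\otimes\eta(\alpha)$, so $t_\alpha$ lies in the $\eta(\alpha)$-spectral subspace $A_{\eta(\alpha)}$, and the dual action acts on $A_g$ by multiplication by the character value $\gamma(g)$.

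I do not expect any serious obstacle here: the proof is essentially a one-line invocation of coaction–action duality for discrete abelian groups together with \thmref{coaction crossed product isomorphism}. The only point requiring a modicum of care is the bookkeeping of conventions — namely checking that the direction of the duality in \cite[Appendix~A]{enchilada} matches our conventions for $\delta_G$, for the spectral subspaces $A_g=\{a:\delta(a)=a\otimes g\}$, and for the pairing between $G$ and $\what G$, so that the character formula comes out as $\gamma(\eta(\alpha))$ rather than its conjugate. Since the statement already commits to $\alpha_\gamma(t_\alpha)=\gamma(\eta(\alpha))t_\alpha$ and the elements $\{t_\alpha:\alpha\in\CC\}$ generate $\OO(\CC)$, this determines $\alpha$ uniquely, so it is enough to observe that the duality isomorphism intertwines $\delta$ with this particular $\alpha$, which it manifestly does on the generators $t_\alpha$ by the spectral-subspace computation above.
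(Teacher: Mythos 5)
Your proposal is correct and is exactly the paper's argument: the paper proves this corollary simply by citing \thmref{coaction crossed product isomorphism} together with the coaction--action duality for abelian groups from \cite[Appendix~A]{enchilada}, which is precisely the chain of isomorphisms you write down. Your additional check that the dual action satisfies $\alpha_\gamma(t_\alpha)=\gamma(\eta(\alpha))t_\alpha$ on the spectral subspaces is a reasonable (and correct) elaboration of what the paper leaves implicit.
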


\begin{rem} In the setting of Corollary~\ref{gauge} one may introduce a time evolution $\omega\: \mathbb{R}\to\aut(\OO(\CC))$ 
whenever there exists a continuous homomorphism $x\mapsto \gamma_x$ from $\mathbb{R}$ into $\what G$ by defining $ \omega_x= \alpha_{\gamma_x}$ for each $x\in \mathbb{R}$. In this case we have  
\[\omega_x(t _\alpha) = \gamma_x(\eta(\alpha)) t_\alpha \] 
for all $x \in \mathbb{R}$ and $\alpha \in \CC$. 
Starting with the seminal paper of Laca and Raeburn \cite{LR:affine} on the Toeplitz algebra of the affine semigroup, KMS-states for similar time evolutions on Toeplitz algebras and Cuntz-Krieger algebras of some particular LCSCs have been investigated during the last decade (see e.g.~\cite{HLRS1, HLRS2} where $\CC$ is a higher-rank graph, and \cite{ABLS} where $\CC$ is a right LCM). Thus a study of KMS-states for time evolutions of the type described above for other LCSCs would be natural in the future.     
\end{rem}

\section{Free actions of groups on LCSCs}\label{act}
 Assume that a discrete group $G$ acts on a small category  $\DD$ by automorphisms, and that this action is \emph{free} in the sense that 
for all $g\in G$ and $\lambda \in \DD$,
$g\cdot \lambda = \lambda$ 
implies $g=e$ (the unit of $G$). 

We may then define the quotient small category $\DD/G$ as follows. Setting $[\lambda]= \{g\cdot \lambda : g \in G\}$ for $\lambda \in \DD$, we define
$\DD/G$ as a set by $\DD/G = \big\{[\lambda] : \lambda \in \DD\big\}$. Put $(\DD/G)^0 = \{[v] : v \in \DD^0\}$, and let the range and the source maps $r, s :\DD/G\to (\DD/G)^0$ be given by $r([\lambda])= [r(\lambda)]$ and $ s([\lambda]) = [s(\lambda)] $.

Let $\lambda, \mu \in \DD$ be such that $([\lambda], [\mu])$ is composable in $\DD/G$. 
Then \[[s(\lambda)]=s([\lambda])=r([\mu])= [r(\mu)],\] so there exists $g\in G$ such that $s(\lambda) =g\cdot r(\mu) =  r(g\cdot\mu)$, and $g$ is uniquely determined since the action $G\act\DD$ is free.  
Moreover, if $\lambda' = h\cdot \lambda$ and $\mu' = k\cdot \mu$ for $h,k\in G$, then we have
\[ h\cdot(\lambda (g\cdot\mu)) = (h\cdot\lambda) ((hg)\cdot\mu) = \lambda' ((hgk^{-1})\cdot\mu') =  \lambda' (g'\cdot\mu'),\]
with $g'= hgk^{-1}$.
It follows that the product
\[ [\lambda][\mu] = [\lambda (g\cdot \mu)]\]
is well defined whenever  $s([\lambda])=r([\mu])$, where  $g
\in G$ is determined as above. It is then routine to check that the remaining properties necessary for $\DD/G$ to become a small category hold. 

We note that the quotient map $q\: \DD \to \DD/G$ sending $\lambda$ to $[\lambda]$ is a functor. For if $(\lambda, \mu)$ is a composable pair in $\DD$, then we have \[s([\lambda]) = [s(\lambda)]= [r(\mu)] = r([\mu]),\] so $([\lambda], [\mu])$ is a composable pair in $\DD/G$; as $s(\lambda) = e\cdot r(\mu)$, we get \[ q(\lambda \mu) = [\lambda \mu]=[\lambda (1\cdot \mu)] = [\lambda][\mu] \,.\]
We record this in the following:
\begin{prop} 
Assume that $\DD$ is a small category and $G\act \DD$ is a free action by automorphisms. Then $\DD/G$ is a small category and the quotient map is a functor. Moreover, if $\DD$ is a LCSC, then $\DD/G$ is too.
\end{prop}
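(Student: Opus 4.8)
The plan is as follows. The discussion preceding the proposition already does most of the work: it defines $\DD/G$ and $(\DD/G)^0$ as sets, the maps $r,s$, and the partial multiplication; it checks that the multiplication is well defined (this is exactly where freeness is used, since the element $g\in G$ with $s(\lambda)=r(g\cdot\mu)$ must be unique); and it verifies that the quotient map $q$ is a functor. So what remains is to check the four axioms for a small category for $\DD/G$, and then to verify left cancellation under the extra hypothesis that $\DD$ is a LCSC.

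For the small-category axioms I would take the identity morphisms to be the classes $[v]$ with $v\in\DD^0$. Axioms (1), (3), and (4) are immediate from the definitions of $r$ and $s$ on $\DD/G$ together with the corresponding axioms in $\DD$ and the fact that $q$ is a functor; for instance $r([\lambda][\mu])=r([\lambda(g\cdot\mu)])=[r(\lambda)]=r([\lambda])$. The one axiom requiring genuine care is associativity. Given a composable triple $([\lambda],[\mu],[\nu])$, I would choose $g\in G$ with $s(\lambda)=r(g\cdot\mu)$ and $h\in G$ with $s(\mu)=r(h\cdot\nu)$. Since $G$ acts by functors, $g\cdot s(\mu)=(gh)\cdot r(\nu)$, so $gh$ is the element making $([\lambda][\mu],[\nu])$ composable, and one computes
\[
([\lambda][\mu])[\nu]=[\lambda(g\cdot\mu)((gh)\cdot\nu)]=[\lambda\,(g\cdot(\mu(h\cdot\nu)))];
\]
on the other side, $g$ is again the element making $([\lambda],[\mu][\nu])$ composable, so $[\lambda]([\mu][\nu])=[\lambda(g\cdot(\mu(h\cdot\nu)))]$, giving equality. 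The only point here is to stay disciplined about which group element renders each pair composable, invoking uniqueness (freeness) and that $G$ acts by automorphisms.

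For left cancellation, suppose $[\lambda][\mu]=[\lambda][\nu]$ with $s([\lambda])=r([\mu])=r([\nu])$. Choose $g,h\in G$ with $s(\lambda)=r(g\cdot\mu)=r(h\cdot\nu)$. Then $[\lambda(g\cdot\mu)]=[\lambda(h\cdot\nu)]$, so there is $k\in G$ with $\lambda(g\cdot\mu)=k\cdot(\lambda(h\cdot\nu))=(k\cdot\lambda)((kh)\cdot\nu)$. Applying $r$ gives $r(\lambda)=k\cdot r(\lambda)$, so $k=e$ by freeness; then $\lambda(g\cdot\mu)=\lambda(h\cdot\nu)$, and left cancellation in $\DD$ yields $g\cdot\mu=h\cdot\nu$, whence $[\mu]=[\nu]$.

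I do not anticipate a real obstacle: the content is entirely bookkeeping, and the preceding text has already isolated the one substantive ingredient (well-definedness via freeness). The part most likely to trip one up is the associativity computation, where one must record at each stage the unique group element making a given pair composable; this is precisely where freeness of the action is used, and it is what makes the whole construction coherent.
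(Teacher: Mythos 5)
Your proposal is correct and follows essentially the same route as the paper: the paper likewise relies on the preceding discussion for well-definedness of the multiplication and functoriality of $q$, declares the remaining small-category axioms routine (you simply spell out the associativity bookkeeping that the paper omits), and proves left cancellation by exactly your argument --- extract $k$ with $\lambda(g\cdot\mu)=k\cdot(\lambda(h\cdot\nu))$, apply $r$ and freeness to get $k=e$, then use left cancellation in $\DD$.
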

\begin{proof} 
It only remains to check the last assertion.
Let $([\lambda],[\mu]), ([\lambda],[\mu'])$ be composable pairs in $\DD/G$ and assume that  $[\lambda][\mu] =  [\lambda][\mu'] $. Then $[\lambda (g\cdot\mu)]= [\lambda (g'\cdot\mu')]$ for some $g, g' \in G$, and this implies that
 \[\lambda (g\cdot\mu)= h\cdot\big(\lambda (g'\cdot\mu')\big) = (h\cdot\lambda) ((hg')\cdot\mu')\]
for some $h\in G$. Thus we have $r(\lambda) = r(h\cdot \lambda) = h\cdot r(\lambda)$, so $h=e$ (by freeness of the $G$-action). This gives that 
\[\lambda (g\cdot\mu)= \lambda (g'\cdot\mu'),\]
hence that $g\cdot\mu = g'\cdot\mu'$ (by left cancellativity of $\DD$). This implies that $[\mu]=[\mu']$, as desired.
\end{proof}  

We will see in Corollary~\ref{D fin align} 
that if $\DD/G$ is finitely aligned, then $\DD$ is too. One may wonder whether the converse holds. 
Here are some observations concerning this problem.

\begin{lem} \label{techn}
Assume that $\DD$ is a LCSC and $G\act \DD$ is a free action by automorphisms, and let $q\: \DD \to \DD/G$ denote the quotient map.

Let $[\lambda], [\mu] \in \DD/G$. Then we have
$[\lambda](\DD/G) \cap [\mu] (\DD/G)\neq \varnothing$ if and only if there exists some $k\in G$ such that $\lambda\DD \cap (k \cdot \mu)\DD \neq \varnothing$, in which case we have \[[\lambda](\DD/G) \cap [\mu] (\DD/G) = \bigcup_{t\in G} q\big( \lambda\DD \cap (t \cdot \mu)\DD \big).\]
\end{lem}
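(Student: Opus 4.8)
The plan is to unravel both sides of the asserted identity in terms of representatives in $\DD$, using the description of composition in $\DD/G$ together with the freeness of the action. First I would prove the intersection is nonempty iff some translate of $\mu$ has a nonempty intersection with $\lambda$. For the forward direction, suppose $[\nu] \in [\lambda](\DD/G) \cap [\mu](\DD/G)$. Then $[\nu] = [\lambda][\sigma] = [\mu][\tau]$ for some $\sigma, \tau \in \DD$ (with the relevant sources and ranges matching up in the quotient). By definition of composition in $\DD/G$, this means $[\nu] = [\lambda(g\cdot\sigma)] = [\mu(g'\cdot\tau)]$ for suitable uniquely-determined $g, g' \in G$. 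Hence there is $h \in G$ with $\lambda(g\cdot\sigma) = h\cdot(\mu(g'\cdot\tau)) = (h\cdot\mu)((hg')\cdot\tau)$, so $\lambda(g\cdot\sigma) \in \lambda\DD \cap (h\cdot\mu)\DD$, giving nonemptiness with $k = h$. Conversely, if $\lambda\DD \cap (k\cdot\mu)\DD \neq \varnothing$, pick $\rho$ in this intersection; then $[\rho] = [\lambda\cdot(\text{something})] \in [\lambda](\DD/G)$, and since $[\rho] = [(k\cdot\mu)\cdot(\text{something})] = [\mu\cdot(\text{something})] \in [\mu](\DD/G)$ (using $[k\cdot\mu] = [\mu]$), the intersection is nonempty.

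Next I would establish the displayed set equality, under the standing assumption that the intersection is nonempty. The inclusion $\supseteq$ is the easier one: for any $t \in G$ and any $\rho \in \lambda\DD \cap (t\cdot\mu)\DD$, the argument just given shows $q(\rho) = [\rho] \in [\lambda](\DD/G) \cap [\mu](\DD/G)$, since $[\rho] = [\lambda]\cdot[\cdots]$ and $[\rho] = [t\cdot\mu]\cdot[\cdots] = [\mu]\cdot[\cdots]$. For the inclusion $\subseteq$: take $[\nu] \in [\lambda](\DD/G) \cap [\mu](\DD/G)$. Running the forward-direction computation above produces $g, h \in G$ and an element $\xi := \lambda(g\cdot\sigma) = (h\cdot\mu)((hg')\cdot\tau)$ lying in $\lambda\DD \cap (h\cdot\mu)\DD$ with $q(\xi) = [\lambda(g\cdot\sigma)] = [\nu]$. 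Taking $t = h$ shows $[\nu] \in q(\lambda\DD \cap (t\cdot\mu)\DD)$, which is what we want.

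The main obstacle, or rather the place demanding care, is bookkeeping the various group elements: the element $g$ implementing the composition $[\lambda][\sigma]$ in $\DD/G$ is determined by the condition $s(\lambda) = r(g\cdot\sigma)$ — so one must first check that $[\sigma]$ can be chosen with $r([\sigma]) = s([\lambda])$ (a harmless replacement of $\sigma$ by a translate), and then invoke freeness to pin down $g$. Similarly for $g'$ and $\tau$. One must also be careful that $q(\xi) = [\nu]$ really holds: this is just that $q$ is a functor, already noted in the excerpt, combined with the definition $[\lambda][\sigma] = [\lambda(g\cdot\sigma)]$. None of these steps is deep; the content is purely a matter of carrying the translates through consistently, and the freeness of the action is what makes every relevant group element unambiguous. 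I would therefore write the proof as a clean two-directional argument, doing the nonemptiness equivalence first and then observing that its proof already yields both inclusions of the set identity.
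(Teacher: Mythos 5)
Your proposal is correct and follows essentially the same route as the paper's proof: unwind the quotient composition $[\lambda][\sigma]=[\lambda(g\cdot\sigma)]$ in both factorizations of $[\nu]$, use the orbit relation to produce the translate $k\cdot\mu$, and observe that the two directions of the nonemptiness equivalence already give the two inclusions of the displayed set identity. The bookkeeping points you flag (freeness pinning down $g$, $[k\cdot\mu]=[\mu]$, and $q$ being a functor) are exactly the ones the paper relies on.
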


\begin{proof} 
Assume first that $[\lambda](\DD/G) \cap [\mu] (\DD/G)\neq \varnothing$ and let $[\nu] \in [\lambda](\DD/G) \cap [\mu] (\DD/G)$. Then we have
\[[\nu] = [\lambda (g\cdot \gamma)] = [ \mu(h\cdot \delta)]\] for some $\lambda, \gamma , \mu, \delta \in \DD$ and some $g, h \in G$ such that $s(\lambda) = r(g\cdot \gamma)$ and $s(\mu) = r(h\cdot \delta)$.  This implies that there exists $k \in G$ such that 
\[\lambda (g\cdot \gamma) = k\cdot(\mu(h\cdot \delta)) = (k \cdot \mu)(k\cdot (h\cdot \delta)),\]
Thus $\lambda\DD \cap (k \cdot \mu)\DD \neq \varnothing$, and $[\nu] \in q(\lambda\DD \cap (k \cdot \mu)\DD)$. This also shows that  $[\lambda](\DD/G) \cap [\mu] (\DD/G) \subset \bigcup_{t\in G} q\big( \lambda\DD \cap (t \cdot \mu)\DD \big)$.

Conversely, assume that $\lambda\DD \cap (k \cdot \mu)\DD \neq \varnothing$ for some $k\in G$, and let $\alpha = q(\nu) = [\nu]$,  where $\nu \in \lambda\DD \cap (k \cdot \mu)\DD$. Then 
there exist $\gamma', \delta' \in\DD$ such that $s(\lambda) = r(\gamma')$, $s(k\cdot \mu)= r(\delta')$ and 
\[  \nu = \lambda \gamma' = (k\cdot \mu) \delta'.\]
Thus we get that
\[\alpha = [\lambda \gamma'] = [\lambda] [\gamma'] = [ (k\cdot \mu) \delta'] =  [k\cdot \mu] [\delta'] =[\mu] [\delta']. 
\]
Hence, $\alpha \in [\lambda](\DD/G) \cap [\mu] (\DD/G)$, so  $[\lambda](\DD/G) \cap [\mu] (\DD/G)\neq \varnothing$.
This also shows that $\bigcup_{t\in G} q\big( \lambda\DD \cap (t \cdot \mu)\DD \big) \subset [\lambda](\DD/G) \cap [\mu] (\DD/G)$. 
Altogether, this proves the lemma.
\end{proof}

Now,  let  $\DD$ and $G\act \DD$ be as in Lemma~\ref{techn}. Assume that $\DD$ is finitely aligned and $[\lambda](\DD/G) \cap [\mu] (\DD/G)\neq \varnothing$. Set \[K=\{ k \in G :  \lambda\DD \cap (k \cdot \mu)\DD \neq \varnothing\}\] and note that $K$ is nonempty. Moreover, for each $k \in K$ we can find a nonempty finite subset $F_k$ of $ \DD$ such that   
\[\lambda\DD \cap (k \cdot \mu)\DD = \bigcup_{\omega \in F_k} \omega \DD.\] 
Set $F= \bigcup_{k\in K} F_k$. Then we get that 
\begin{align*}
[\lambda](\DD/G) \cap [\mu] (\DD/G) &= \bigcup_{k\in K} q\big( \lambda\DD \cap (k \cdot \mu)\DD \big) \\
&= \bigcup_{k\in K} q\big(  \bigcup_{\omega \in F_k} \omega \DD \big) \\
&= \bigcup_{\omega \in F} [\omega] (\DD/G).
\end{align*}

To be able to conclude that $\DD/G$ is finitely aligned, we would like to show that 
$\bigcup_{\omega \in F} [\omega] (\DD/G) = \bigcup_{[\omega] \in A} [\omega] (\DD/G)$ for some finite subset $A$ of $\DD/G$. 
It is not clear to us that this is true in general.
Anyhow, if $G$ is finite, then $F$ is finite, so the desired equality is obviously true. So we can at least record the following:

\begin{prop}
Assume $\DD$ is a finitely aligned LCSC, and  $G\act \DD$ is a free action of a finite group $G$. Then $\DD/G$ is also a finitely aligned LCSC.
\end{prop}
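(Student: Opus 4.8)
The plan is to read the statement off the analysis already carried out in the paragraph preceding it, specializing to a finite group $G$. The fact that $\DD/G$ is a LCSC is established above, so the only thing left is to verify finite alignment: given $[\lambda],[\mu]\in\DD/G$, I must exhibit a \emph{finite} subset $A\subset\DD/G$ with $[\lambda](\DD/G)\cap[\mu](\DD/G)=\bigcup_{a\in A}a(\DD/G)$.

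First I would dispose of the trivial case: if $[\lambda](\DD/G)\cap[\mu](\DD/G)=\varnothing$, take $A=\varnothing$. Otherwise, as in the discussion following \lemref{techn}, put $K=\{k\in G:\lambda\DD\cap(k\cdot\mu)\DD\neq\varnothing\}$, which is nonempty, and for each $k\in K$ use finite alignment of $\DD$ to pick a nonempty finite $F_k\subset\DD$ with $\lambda\DD\cap(k\cdot\mu)\DD=\bigcup_{\omega\in F_k}\omega\DD$. Then \lemref{techn}, together with the functoriality of $q$ (which gives $q(\omega\DD)=[\omega](\DD/G)$), yields
\[ [\lambda](\DD/G)\cap[\mu](\DD/G)=\bigcup_{k\in K}q\big(\lambda\DD\cap(k\cdot\mu)\DD\big)=\bigcup_{\omega\in F}[\omega](\DD/G), \]
where $F=\bigcup_{k\in K}F_k$.

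The only remaining point is the one the authors explicitly flag for general $G$: a priori $F$ need not be finite. But this is exactly where finiteness of $G$ does all the work — here $K\subset G$ is finite and each $F_k$ is finite, so $F$ is a finite union of finite sets, hence finite, and $A=\{[\omega]:\omega\in F\}$ is the desired finite subset of $\DD/G$. So I expect no genuine obstacle: once \lemref{techn} and the surrounding discussion are in hand, the proof collapses to the observation that $K\subset G$ is finite. (One could, if desired, further extract an independent transversal from $A$ up to $\sim$, but this is unnecessary for finite alignment as defined here.)
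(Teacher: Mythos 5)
Your proposal is correct and follows exactly the paper's own argument: the authors establish the identity $[\lambda](\DD/G)\cap[\mu](\DD/G)=\bigcup_{\omega\in F}[\omega](\DD/G)$ with $F=\bigcup_{k\in K}F_k$ in the discussion preceding the proposition, and then observe that finiteness of $G$ forces $K$, and hence $F$, to be finite. Nothing is missing.
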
  

 Let now $\CC$ be a small category 
and let $\eta\:\CC\to G$ be a cocycle
into a discrete group $G$.
Recall from Lemma~\ref{act skrew} that $G$ acts on the skew product category $\skpr$ by
\[ g\cdot (\beta, h) = (\beta, hg^{-1}).\]
It is immediate  that this action of $G$ on $\skpr$  is free, and that the resulting quotient category $(\skpr)/G$ is isomorphic to $\CC$, via the (well-defined) map $[(\beta, h)] \mapsto \beta$.  

Conversely, the following analog of the Gross-Tucker theorem for free actions of groups on directed graphs (cp.\ \cite{GT}) holds: 

\begin{thm}[Gross-Tucker] \label{GT-thm} Assume $\DD$ is a small category and $G\act \DD$ is a free action by automorphisms. Then there exists a cocycle $\eta\: \DD/G \to G$ such that $\DD$  is equivariantly isomorphic to  $(\DD/G)\times_\eta G $.
\end{thm}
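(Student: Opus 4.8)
The plan is to follow the classical Gross--Tucker argument: choose a set-theoretic section of the quotient functor $q\:\DD\to\DD/G$ that is normalized on sources, extract the cocycle $\eta$ from the behaviour of this section on ranges, and then write down the equivariant isomorphism explicitly in terms of the section. Throughout, the $G$-action on the skew product is the canonical one from \lemref{act skrew} (with $\CC$ replaced by $\DD/G$), and the $G$-action on $\DD$ is the given one.

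First I would use the axiom of choice to fix a section $\sigma_0\:(\DD/G)^0\to\DD^0$ of $q$ over the vertices, which is possible since $(\DD/G)^0=\DD^0/G$. Since $G$ acts freely on $\DD$ it acts freely on $\DD^0$, so for each $[\lambda]\in\DD/G$ there is a \emph{unique} element of the orbit $[\lambda]$ whose source is $\sigma_0(s([\lambda]))$; call it $\sigma([\lambda])$, so that $\sigma\:\DD/G\to\DD$ is a section of $q$ extending $\sigma_0$ with $s(\sigma([\lambda]))=\sigma_0(s([\lambda]))$ by construction. As $r(\sigma([\lambda]))$ and $\sigma_0(r([\lambda]))$ lie in the same $G$-orbit (both are sent to $r([\lambda])$ by $q$), freeness provides a unique $\eta([\lambda])\in G$ with $\sigma_0(r([\lambda]))=\eta([\lambda])\cdot r(\sigma([\lambda]))$; this is the candidate cocycle. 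Finally --- once $\eta$ is known to be a cocycle, see below --- one defines $\Phi\:(\DD/G)\times_\eta G\to\DD$ by $\Phi([\lambda],g)=g\inv\cdot\sigma([\lambda])$.

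The verification then has three parts. (i) \emph{$\eta$ is a functor.} For a vertex $[v]$ one has $\sigma([v])=\sigma_0([v])$, forcing $\eta([v])=e$ by freeness; for a composable pair $a=[\lambda]$, $b=[\mu]$ in $\DD/G$ one checks that $\eta(b)$ is precisely the group element appearing in the definition of the product $ab$ in $\DD/G$ (the unique $g$ with $s(\sigma(a))=r(g\cdot\sigma(b))$), which leads to the key formula $\sigma(ab)=\bigl(\eta(b)\inv\cdot\sigma(a)\bigr)\,\sigma(b)$ (product in $\DD$), obtained by normalizing the source of the representative $\sigma(a)(\eta(b)\cdot\sigma(b))$ of $ab$; computing $r(\sigma(ab))$ from this formula and comparing with the defining relation for $\eta(ab)$ then yields $\eta(ab)=\eta(a)\eta(b)$. (ii) \emph{$\Phi$ is an isomorphism of categories.} Bijectivity is immediate from freeness (every $\mu\in\DD$ equals $k\inv\cdot\sigma([\mu])$ for a unique $k$, so $\mu=\Phi([\mu],k\inv)$); $\Phi$ carries objects to objects since $\sigma$ does on vertex classes; compatibility with $s$ and $r$ is a direct check using $s(\sigma([\lambda]))=\sigma_0(s([\lambda]))$ and the defining relation for $\eta$; and multiplicativity of $\Phi$ follows from the formula for $\sigma(ab)$ in (i) together with the composition rule $g=\eta(\beta)h$ of the skew product. (iii) \emph{$\Phi$ is equivariant.} Since $g\cdot([\lambda],h)=([\lambda],hg\inv)$ we get $\Phi\bigl(g\cdot([\lambda],h)\bigr)=(hg\inv)\inv\cdot\sigma([\lambda])=g\cdot\bigl(h\inv\cdot\sigma([\lambda])\bigr)=g\cdot\Phi([\lambda],h)$.

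The one genuinely delicate point is establishing the formula for $\sigma(ab)$ in step (i): this is the unique place where the definition of composition in $\DD/G$, the source-normalization of $\sigma$, and the definition of $\eta$ all have to be reconciled simultaneously, and it requires some care in tracking the relevant elements of $G$. Once that formula is in hand, the multiplicativity of both $\eta$ and $\Phi$ drop out, and everything else is routine bookkeeping or formal.
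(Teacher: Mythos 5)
Your proposal is correct and follows essentially the same argument as the paper: a vertex section, a normalized representative of each orbit chosen via freeness, a cocycle measuring the discrepancy at the other endpoint, and the explicit map $(\alpha,g)\mapsto(\text{group element})\inv\cdot(\text{representative})$. The only difference is that you normalize representatives on sources and read off $\eta$ from ranges while the paper does the reverse ($r(\lambda_\alpha)=v_{r(\alpha)}$, $s(\lambda_\alpha)=\eta(\alpha)\cdot v_{s(\alpha)}$, $\rho(\alpha,g)=(\eta(\alpha)g)\inv\cdot\lambda_\alpha$); a short computation shows these dual conventions produce the same cocycle and the same isomorphism.
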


\begin{proof}
We expand the proof briefly sketched by Kumjian and Pask in the case where $\DD$ is a higher rank graph 
(cp.\ \cite[Remark 5.6]{kp:kgraph}). 

First, for each $w \in (\DD/G)^0$, choose $v_w \in \DD^0$ such that $[v_w] = w$. 
Next, let $\alpha \in \DD/G$, so $\alpha= [\lambda] =\{g\cdot \lambda: g\in G\}$ for some $\lambda \in \DD$. Since $G$ acts freely on $\DD$, the map $g\mapsto g\cdot \lambda$ is a bijection from $G$ onto $\alpha$. Moreover,  since \[[v_{r(\alpha)}] = r(\alpha) =r([\lambda])= [r(\lambda)], 
\] 
there is a unique $g \in G$
such that 
$v_{r(\alpha)} = g\cdot r(\lambda)$, i.e., such that $v_{r(\alpha)} = r(g\cdot \lambda)$. 
Thus, we get that $\lambda_\alpha := g \cdot \lambda$ is the unique element of $\DD$ such that  \[[\lambda_\alpha] = \alpha\quad \text{ and} \quad r(\lambda_\alpha) = v_{r(\alpha)}.\]
Since 
$[s(\lambda_\alpha)] = s(\alpha) = [v_{s(\alpha)}]$, 
we also have that there is a unique element $\eta(\alpha) \in G$ such that \[s(\lambda_\alpha) = \eta(\alpha) \cdot v_{s(\alpha)}.\]
Assume now that $\beta \in \DD/G$, and $r(\beta) = s(\alpha)$. Note that
\[r(\eta(\alpha)\cdot \lambda_\beta) = \eta(\alpha) \cdot r(\lambda_\beta)  = \eta(\alpha) \cdot v_{r(\beta)} = \eta(\alpha) \cdot v_{s(\alpha)} = s(\lambda_\alpha).\]
Thus, the pair $(\lambda_\alpha, \eta(\alpha)\cdot\lambda_\beta)$ is composable in $\DD$. Since \[ [\lambda_\alpha (\eta(\alpha)\cdot\lambda_\beta)] = [\lambda_\alpha] [\eta(\alpha)\cdot\lambda_\beta] = \alpha [\lambda_\beta] = \alpha \beta\]
and 
\[ r\big(\lambda_\alpha (\eta(\alpha)\cdot\lambda_\beta)\big) = r(\lambda_\alpha) = v_{r(\alpha)} = v_{r(\alpha\beta)} = r(\lambda_{\alpha\beta})\]
we get that  \[\lambda_{\alpha\beta} = \lambda_\alpha (\eta(\alpha)\cdot\lambda_\beta).\] This gives that 
\begin{align*} \eta(\alpha\beta)\cdot v_{s(\beta)} &= \eta(\alpha\beta)\cdot v_{s(\alpha\beta)} \\ &= s(\lambda_{\alpha\beta})\\
 &= s(\lambda_{\alpha}(\eta(\alpha)\cdot \lambda_\beta)) \\
&= s(\eta(\alpha)\cdot \lambda_\beta)\\ 
&= \eta(\alpha)\cdot s(\lambda_\beta) \\ 
&= \eta(\alpha)\cdot(\eta(\beta)\cdot v_{s(\beta)}) \\
&= (\eta(\alpha)\eta(\beta))\cdot v_{s(\beta)},
\end{align*}
hence that $\eta(\alpha\beta) = \eta(\alpha)\eta(\beta)$, by freeness of the $G$-action. This shows that the map $\eta\: \DD/G\to G$ is a cocycle.

We may then define $\rho\: (\DD/G) \times_\eta G \to \DD$ by $\rho(\alpha, g) = 
(\eta(\alpha)g)^{-1} \cdot \lambda_\alpha$.
 It is routine to check that $\rho$ is an equivariant isomorphism. As a sample, we show that $\rho$ is a functor. Let $\big((\alpha, g), (\beta, h)\big)$ be a composable pair in $(\DD/G) \times_\eta G$. Then we have that 
$g=\eta(\beta)h$,
so we get
\begin{align*} 
\rho\big((\alpha, g) (\beta, h)\big) &= 
\rho( \alpha\beta, h)
\\&= 
(\eta(\alpha\beta)h)^{-1} \cdot \lambda_{\alpha\beta} 
\\&= 
(\eta(\alpha)g)^{-1} \cdot \big(\lambda_{\alpha}(\eta(\alpha)\cdot\lambda_{\beta})\big) 
\\ &=
\big((\eta(\alpha)g)^{-1} \cdot \lambda_{\alpha}\big)\big(((\eta(\alpha)g)^{-1}\eta(\alpha))\cdot\lambda_{\beta}\big)
\\&= 
\big((\eta(\alpha)g)^{-1} \cdot \lambda_{\alpha}\big)\big((\eta(\beta)h)^{-1}\cdot\lambda_{\beta}\big) 
\\ &= \rho(\alpha, g)\rho(\beta, h).
\qedhere
\end{align*}
\end{proof}

 \begin{cor} \label{D fin align} Assume $\DD$ is a LCSC, $G\act \DD$ is a free action by automorphisms, and $\DD/G$ is finitely aligned. Then $\DD$ is finitely aligned. 
Moreover, there exists a coaction $\delta$ of $G$ on $\OO(\DD/G)$ such that \[ \OO(\DD) \simeq \OO(\DD/G)\rtimes_\delta G.\]
Also, letting  $\beta$ denote the natural action of $G$ on $\OO(\DD)$ arising from the action $G\act \DD$, we have that
 \[\OO(\DD) \rtimes_\beta G \simeq \OO(\DD/G) \otimes \KK(\ell^2(G))\,.\]
  \end{cor}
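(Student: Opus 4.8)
The plan is to reduce everything to \thmref{GT-thm}, \propref{is coaction}, \thmref{coaction crossed product isomorphism}, and \corref{dcp} via the equivariant isomorphism $\DD\simeq (\DD/G)\times_\eta G$. First I would invoke \thmref{GT-thm} to produce a cocycle $\eta\:\DD/G\to G$ together with a $G$-equivariant isomorphism $\rho\:(\DD/G)\times_\eta G\to \DD$, where the $G$-action on the left is the canonical one from \lemref{act skrew} and the $G$-action on the right is the given action $G\act\DD$. Since $\DD/G$ is assumed finitely aligned, \lemref{skew finitely aligned} gives that $(\DD/G)\times_\eta G$ is finitely aligned, and hence so is $\DD$, being isomorphic to it as a small category; this settles the first assertion.

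Next I would set $\delta$ to be the coaction of $G$ on $\OO(\DD/G)$ associated to the cocycle $\eta$ via \propref{is coaction}. By \thmref{coaction crossed product isomorphism} we have a $\gamma-\what\delta$ equivariant isomorphism $\OO((\DD/G)\times_\eta G)\simeq \OO(\DD/G)\rtimes_\delta G$, where $\gamma$ is the action of $G$ on $\OO((\DD/G)\times_\eta G)$ induced by the canonical action on the skew product. Composing with the isomorphism $\OO(\DD)\simeq \OO((\DD/G)\times_\eta G)$ induced functorially by $\rho$ (which exists because a category isomorphism carries a universal covariant representation to a universal covariant representation, hence induces an isomorphism of the Cuntz-Krieger algebras), we obtain $\OO(\DD)\simeq \OO(\DD/G)\rtimes_\delta G$. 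This proves the second assertion.

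For the last assertion, I would keep track of the equivariance. The isomorphism $\OO(\DD)\simeq \OO((\DD/G)\times_\eta G)$ induced by $\rho$ intertwines the action $\beta$ of $G$ on $\OO(\DD)$ with the action $\gamma$ on $\OO((\DD/G)\times_\eta G)$, precisely because $\rho$ is $G$-equivariant as categories (so it intertwines the induced automorphisms on the Cuntz-Krieger algebras, by naturality of the universal construction). Therefore $\OO(\DD)\rtimes_\beta G\simeq \OO((\DD/G)\times_\eta G)\rtimes_\gamma G$. Now \corref{dcp}, applied to the finitely aligned LCSC $\DD/G$ with cocycle $\eta$, gives
\[
\OO\bigl((\DD/G)\times_\eta G)\rtimes G\bigr)\simeq \OO(\DD/G)\otimes\KK(\ell^2(G)),
\]
and combining this with the preceding isomorphism yields $\OO(\DD)\rtimes_\beta G\simeq \OO(\DD/G)\otimes\KK(\ell^2(G))$, as desired.

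The one point requiring a little care — and the main obstacle, such as it is — is the bookkeeping of which $G$-action sits where: one must check that the canonical action $\gamma$ on $\OO((\DD/G)\times_\eta G)$ transported back through $\rho$ is exactly the given action $\beta$ on $\OO(\DD)$, and that $\rho$ is genuinely $G$-equivariant (which is asserted in \thmref{GT-thm}, so this is really just unwinding definitions). Everything else is a formal concatenation of results already established.
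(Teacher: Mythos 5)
Your proof is correct and follows essentially the same route as the paper's own argument: Theorem~\ref{GT-thm} plus Lemma~\ref{skew finitely aligned} for finite alignment, Theorem~\ref{coaction crossed product isomorphism} for the crossed-product isomorphism, and Corollary~\ref{dcp} for the stabilization result. The paper's proof is terser but identical in substance; your extra care with the $\beta$--$\gamma$ equivariance bookkeeping is exactly the point the paper leaves implicit.
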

\begin{proof} The first assertion follows from Theorem~\ref{GT-thm} and \lemref{skew finitely aligned}.
Next, Theorem~\ref{GT-thm} gives us a cocycle $\eta\: \DD/G \to G$ such that, letting 
$\delta$ denote the associated coaction of $G$ on $\OO(\DD/G)$, and using  Theorem~\ref{coaction crossed product isomorphism}, we get  
\[ \OO(\DD) \simeq  \OO((\DD/G) \times_\eta G) \simeq \OO(\DD/G) \rtimes_\delta G.\]
Also, letting
$\gamma$ denote the induced action of $G$ on $\OO((\DD/G) \times_\eta G)$, and using Corollary~\ref{dcp}, we get
\[\OO(\DD) \rtimes_\beta G \simeq \OO((\DD/G) \times_\eta G) \rtimes_\gamma G \simeq \OO(\DD/G) \otimes \KK(\ell^2(G))\,.
\qedhere\]
\end{proof}

\section{Invariant cocycles and Zappa-Sz\'ep products}

Let $(\CC, H, \varphi)$ be a category system in the sense of \cite[Definition~4.1]{bkqs}. Thus, $\CC$ is a small category, $H$ is a group acting on the set $\CC$ by permutations in such way that
\[r(h\alpha)=hr(\alpha)\midtext{and}s(h\alpha)=hs(\alpha)\righttext{for all}h\in H,\alpha\in \CC,\]
and $\varphi\:H\times \CC\to H$ is a category cocycle for this action. Further, let $\CC\rtimes^\varphi H$ denote the Zappa-Sz\'ep product of $(\CC,H,\varphi)$.

Assume that $\CC$ is a finitely aligned LCSC. Then $\CC\rtimes^\varphi H$ is also a finitely aligned LCSC, cp.\ \cite[Propositions 4.6 and 4.13]{bkqs}. Moreover, assume that $\psi\:\CC \to G$ is  a cocycle into a group $G$ that is \emph{$H$-invariant} in the sense that $\psi(h\alpha) = \psi(\alpha) $ for all $h\in H, \alpha\in \CC$. 

We can then promote $\psi$ to a cocycle 
$\eta_\psi\:\CC\rtimes^\varphi H \to G$ by setting
\[\eta_\psi(\alpha, h) = \psi(\alpha) \righttext{for all} h\in H,\alpha\in \CC.\]
Indeed, for any $(\alpha,h),(\beta,h')\in \CC\rtimes^\varphi H$ with $s(\alpha,h)=r(\beta,h')$, that is, such that $h\inv s(\alpha)=r(\beta)$, we  have
\begin{align*}
\eta_\psi\big((\alpha,h)(\beta,h')\big)&=\eta_\psi\bigl(\alpha(h\beta),\varphi(h,\beta)h'\bigr)=\psi(\alpha(h\beta))\\
&= \psi(\alpha)\psi(\beta) = \eta_\psi(\alpha, h) \eta_\psi(\beta, h').
\end{align*}

Under the same assumptions, it is easy to check that we can also define a category system $(\CC\times_\psi G, H,\wilde\varphi)$ by setting $ h(\alpha, g) = (h\alpha, g)$ and $\wilde{\varphi}(h, (\alpha, g)) = \varphi(\alpha, g)$
for all $h\in H$ and $(\alpha, g)\in \CC\times_\psi G$. Then it is routine to verify that the map $\big((\alpha, h), g\big) \mapsto \big((\alpha, g), h\big)$ gives an isomorphism from $(\CC\rtimes^\varphi H)\times_{\eta_\psi} G$ onto $(\CC\times_\psi G)\rtimes^{\wilde\varphi} H$.

{As a special case, let us consider an Exel-Pardo system $(E, H, \varphi)$, cp.\ \cite{exelpardo, bkqexelpardo}, and the associated category system $(E^*, H, \varphi)$, cp.\ \cite{bkqs}.
So,
$E=(E^0, E^1,r,s)$ is a directed graph, $\varphi$ is  a graph cocycle for an action of $H$ on $E$, and $E^*$ is the category of finite paths in $E$. Since $E^*$ is a finitely aligned LCSC (in fact, it is singly aligned), this fits within the above setting. 
Let $f\:E^1\to G$ be a map into a group $G$. Then $f$ induces a cocycle $\psi_f\:E^*\to G$ in the obvious way. Assume that $f$ is constant on each orbit of the action of $H$ on $E^1$, 
so $f$ may be considered as a function $f\:E^1/H \to G$.
Then $\psi_f$ is easily seen to be invariant under the associated action of $H$  on $E^*$ (cp.\ \cite[Example 4.4]{bkqs}). Thus, in this case, we obtain a $G$-valued cocycle $\eta_f:=\eta_{\psi_f}$ on $ E^*\rtimes^\varphi H$, given by $\eta_f(v,h) = {e}_G$ (the unit of $G$) and
$\eta_f(\alpha, h) = f(e_1)\cdots f(e_n)$ for all $v\in E^0, \alpha=e_1\cdots e_n \in E^*\setminus E^0$ and $h\in H$. If we for example choose $G=\Z$ and let $f\:E^1\to \Z$ be the function  constantly equal to 1, we get that $\eta_f(\alpha, h) = \psi_f(\alpha) = \ell(\alpha)$ (i.e., the length of $\alpha$) for all $\alpha \in E^*$ and $h \in H$. 

Applying our previous work to the above set-up, we get the following result:
\begin{prop}\label{ep-coaction}
Let  $(E, H, \varphi)$ be an Exel-Pardo system, $G$ a group, and  $f\: E^1/H \to G$ a map. Then $f$ induces a 
$G$-valued cocycle $\eta_f$ on the Zappa-Sz{\' e}p product category $E^*\rtimes^\varphi H$, which gives rise to a coaction $\delta_f$ of $G$ on $ \OO(E^*\rtimes^\varphi H)$
such that 
\[ \OO(E^*\rtimes^\varphi H)\rtimes_{\delta_f} G \simeq \OO((E^*\rtimes^\varphi H)\times_{\eta_f} G).\] 
\end{prop}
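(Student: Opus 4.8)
The plan is to assemble the ingredients prepared in the discussion immediately preceding the statement, so that the proof becomes a chain of citations. First I would record that a map $f\:E^1/H\to G$ is the same data as a map $E^1\to G$ that is constant on $H$-orbits, and that such a map extends multiplicatively to a cocycle $\psi_f\:E^*\to G$, with $\psi_f(v)=e_G$ for $v\in E^0$ and $\psi_f(e_1\cdots e_n)=f(e_1)\cdots f(e_n)$. Because $f$ is constant on $H$-orbits, one checks (as in \cite[Example~4.4]{bkqs}) that $\psi_f$ is $H$-invariant, i.e.\ $\psi_f(h\alpha)=\psi_f(\alpha)$ for all $h\in H$ and $\alpha\in E^*$.

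Next I would feed this into the general machinery of the section. Since $E^*$ is a finitely aligned LCSC (indeed singly aligned), the Zappa-Sz\'ep product $E^*\rtimes^\varphi H$ is again a finitely aligned LCSC by \cite[Propositions~4.6 and 4.13]{bkqs}, and the $H$-invariance of $\psi_f$ promotes it to a cocycle $\eta_f:=\eta_{\psi_f}\:E^*\rtimes^\varphi H\to G$ exactly as described above, with $\eta_f(\alpha,h)=\psi_f(\alpha)$. Applying \propref{is coaction} to the cocycle $\eta_f$ on the finitely aligned LCSC $E^*\rtimes^\varphi H$ then yields the coaction $\delta_f$ of $G$ on $\OO(E^*\rtimes^\varphi H)$, determined on generators by $\delta_f(t_{(\alpha,h)})=t_{(\alpha,h)}\otimes\eta_f(\alpha,h)$.

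Finally I would apply \thmref{coaction crossed product isomorphism} with $\CC$ replaced by the finitely aligned LCSC $E^*\rtimes^\varphi H$ and $\eta$ replaced by $\eta_f$ (note that \lemref{skew finitely aligned} guarantees that $(E^*\rtimes^\varphi H)\times_{\eta_f}G$ is itself a finitely aligned LCSC, so that $\OO((E^*\rtimes^\varphi H)\times_{\eta_f}G)$ is defined). This gives the asserted isomorphism $\OO(E^*\rtimes^\varphi H)\rtimes_{\delta_f}G\simeq\OO((E^*\rtimes^\varphi H)\times_{\eta_f}G)$. Since each step is a direct invocation of a result already in hand, I do not anticipate a real obstacle; the only point requiring genuine (if routine) verification is the $H$-invariance of $\psi_f$ and the resulting well-definedness of $\eta_f$ as a cocycle on $E^*\rtimes^\varphi H$, which is precisely the check sketched in the paragraphs preceding the statement.
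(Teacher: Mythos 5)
Your proposal is correct and follows exactly the route the paper intends: the proposition is stated as an immediate consequence of the preceding discussion (the $H$-invariance of $\psi_f$, its promotion to the cocycle $\eta_f$ on $E^*\rtimes^\varphi H$), combined with Proposition~\ref{is coaction} and Theorem~\ref{coaction crossed product isomorphism}. The paper offers no separate proof beyond this assembly, so your write-up matches it in both substance and structure.
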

This result makes it possible to write  $ \OO(E^*\rtimes^\varphi H)$ as the $C^*$-algebra of a Fell bundle over $G$ for different choices of $G$ and $f$, which might be useful to study properties $ \OO(E^*\rtimes^\varphi H)$. We remark that if $E$ is row-finite, then \cite[Corollary 6.8]{bkqs} gives that  $ \OO(E^*\rtimes^\varphi H)$ is isomorphic to the Exel-Pardo algebra associated to $(E, H, \varphi)$.  

Specializing further, let us assume for simplicity that $E$ is a finite graph on $N$ vertices with no sources (although the last condition is not really necessary). Ordering the vertices from $1$ to $N$, the edge set $E^1$ is determined by an  $N\times N$ adjacency matrix $A=[a_{i,j}]$ with entries in $\N\cup\{0\}$ having no zero columns. Then any $N\times N$ matrix $B$ with entries in $\Z$ determines an Exel-Pardo system $(E, \Z, \varphi_B)$ (where $\Z$ fixes all vertices of $E$), cp.\ \cite{exelpardo, bkqexelpardo}, thus giving rise to the (singly aligned) LCSC \[\mathcal{K}_{A,B}:=E^*\rtimes^{\varphi_B} \Z.\] Combining \cite{exelpardo}, \cite{bkqexelpardo} and \cite{bkqs}, one deduces  
that $\OO(\mathcal{K}_{A,B})$ is isomorphic to the Katsura algebra $\OO_{A,B}$ introduced in \cite{KatKirchberg}. Hence the set-up described above enables one to define $G$-valued cocycles of the type $\eta_f$ on  $\mathcal{K}_{A,B}$, hence coactions of $G$ on $\OO_{A,B}$, for any group $G$. As the set $E^1_{i,j}$ of edges in $E$ from $j$ to $i$, which consists of $a_{i,j}$ elements, is left invariant by the action of $\Z$, we may for example pick an element $g_{i,j} \in G$ for every $(i,j) \in N\times N$ satisfying that $a_{i,j} \neq 0$ and define 
$f:E^1\to G$ by $f(e) = g_{i,j}$ whenever $e\in E^1_{i,j}$.  
We note that, more generally, we could have handled in a similar way any countable row-finite graph $E$ having no sources, as in \cite{KatKirchberg}.

\section{On the universal group of a small category}

Let  $\CC$ denote a  small category. A natural question is whether $\CC$ has a \emph{universal group} $(U,j)$, in the sense that $j\: \CC\to U$ is a cocycle into a group $U$ such that for every cocycle $\psi$ from $\CC$ into a group $H$
there exists a unique group homomorphism $\psi'$ making the diagram
\[
\xymatrix{
\CC \ar[r]^-j \ar[dr]_\psi
&U \ar@{-->}[d]^{\psi'}_{!}
\\
&H
}
\]
commute. This question 
may be answered positively by applying a more general construction due to Higgins
\cite[Proposition~19, p.~65]{higgins}.
We provide here 
a self-contained proof of this result.

\begin{prop} \label{uni group}
Every small category $\CC$ has a universal group $(U, j)$. 
\end{prop}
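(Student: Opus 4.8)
The plan is to construct $U$ explicitly as a group presented by generators and relations coming from $\CC$, and then verify the universal property directly. Concretely, I would let $U$ be the group with one generator $x_\alpha$ for each morphism $\alpha\in\CC$, subject to the relations $x_\alpha x_\beta = x_{\alpha\beta}$ whenever the product $\alpha\beta$ is defined in $\CC$, together with $x_v = e$ for every $v\in\CC^0$ (the latter is actually forced: applying the first relation with $\alpha = v = \beta$ gives $x_v^2 = x_v$, whence $x_v = e$ in the group). Define $j\:\CC\to U$ by $j(\alpha) = x_\alpha$. The relations are precisely what is needed to say $j$ is a functor into the group $U$ (viewed as a one-object category), i.e.\ a cocycle: $j(\alpha\beta)=j(\alpha)j(\beta)$ when composable, and $j(v)=e$ for $v\in\CC^0$.

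The verification of the universal property is then formal. Given any cocycle $\psi\:\CC\to H$, I would define $\psi'\:U\to H$ on generators by $\psi'(x_\alpha) = \psi(\alpha)$. To see this is a well-defined group homomorphism, it suffices to check that $\psi'$ respects the defining relations of $U$: if $\alpha\beta$ is defined in $\CC$ then $\psi(\alpha)\psi(\beta) = \psi(\alpha\beta)$ because $\psi$ is a functor, and $\psi(v) = e$ for $v\in\CC^0$ for the same reason, so by the universal property of a group presentation $\psi'$ is well defined. Then $\psi'\circ j = \psi$ holds by construction on each $\alpha\in\CC$. Uniqueness of $\psi'$ is immediate since the image $j(\CC) = \{x_\alpha : \alpha\in\CC\}$ generates $U$ as a group: any homomorphism out of $U$ agreeing with $\psi$ on $j(\CC)$ is determined on all of $U$.

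I do not expect a serious obstacle here; the result is essentially a soft "abstract nonsense" statement, and the main point is simply to be careful in setting up the presentation and in phrasing what "cocycle" means (a functor into a one-object category, equivalently a map $\psi$ with $\psi(\alpha\beta)=\psi(\alpha)\psi(\beta)$ whenever defined, automatically sending identities to $e$). One small thing to be attentive to is set-theoretic size: since $\CC$ is a \emph{small} category, the generating set is a genuine set and the free group on it, hence the quotient $U$, is a bona fide group, so there is no foundational issue. Alternatively, one could phrase the construction via the free group on the underlying set of $\CC$ modulo the normal subgroup generated by the elements $x_\alpha x_\beta x_{\alpha\beta}\inv$ (for composable pairs) and $x_v$ (for $v\in\CC^0$); I would present it in whichever of these two equivalent forms reads most cleanly, and then note in a remark that this recovers Higgins' more general construction.
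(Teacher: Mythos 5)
Your proposal is correct and is essentially identical to the paper's proof: both present $U$ as the free group on the morphisms of $\CC$ modulo the normal closure of the relations $x_\alpha x_\beta x_{\alpha\beta}\inv$ for composable pairs, and both verify the universal property by factoring a given cocycle through the free group. The only cosmetic difference is that you list $x_v=e$ as an explicit (redundant) relation while the paper omits it, which changes nothing.
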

\begin{proof}
 Let $\bar\CC$ be a set disjoint from $\CC$ having the same cardinality as $\CC$, and fix a bijection $\CC\to \bar\CC$. We write the image of $\alpha \in \CC$ in $\bar\CC$ as $\bar\alpha$. Let $\F(\bar\CC)$ denote the free group on the set $\bar \CC$, and let $R \subset \F(\bar\CC)$  be given by
\[R=\big\{ \bar\alpha\,\bar\beta\,(\bar{\alpha\beta})\inv : \alpha, \beta \in \CC, s(\alpha)=r(\beta)\big\}.\]
Let then $U$ be the group with presentation $\langle \bar\CC\, | \,R\rangle$, that is, \[U= \F(\bar\CC)/N(R),\] where $N(R)$ denotes the normal closure of $R$ in $\F(\bar\CC)$. For each $f\in \F(\bar\CC)$, we write $[f]$ for the coset $fN(R) \in U$, so that $f\mapsto [f]$ gives the canonical homomorphism from $\F(\bar\CC)$ onto $U$. We may then define a map $j\:\CC\to U$ by $j(\alpha) =[\bar\alpha]$ for each $\alpha\in \CC$. For each $\alpha, \beta \in \CC$ such that $ s(\alpha)=r(\beta)$ we have
\[ j(\alpha\beta)= [\bar{\alpha\beta}] = [\bar{\alpha}\,\bar{\beta}] =  [\bar{\alpha}]\,[\bar{\beta}]= j(\alpha)j(\beta)\,,\]
so $j$ is a cocycle.

Consider now  a cocycle $\psi$ from $\CC$ into a group $H$. 
By the universal property of $\F(\bar\CC)$, there is  a unique homomorphism $\varphi:\F(\bar\CC)\to H$ satisfying 
$\varphi(\bar\alpha) = \psi(\alpha)$ for each $\alpha \in \CC$. For $\alpha, \beta\in \CC$ such that $s(\alpha)=r(\beta)$ we then have
\begin{align*} \varphi\big(\bar\alpha\,\bar\beta\,(\bar{\alpha\beta})\inv\big)&= \varphi(\bar\alpha)\,\varphi(\bar\beta)\,\varphi(\bar{\alpha\beta})\inv\\
&= \psi(\alpha)\,\psi(\beta)\,\psi({\alpha\beta})\inv\\
&= \psi(\alpha)\,\psi(\beta)\,\psi(\beta)\inv\psi(\alpha)\inv\\ &= e \text{ (the unit of } H).
\end{align*}
Thus $R$ is contained in the normal subgroup $\ker(\varphi)$, and it follows that $N(R) \subset \ker(\varphi)$.
Hence, the map $\psi'\: U\to H$ given by 
\[ \psi'\big([f]\big) = \varphi(f) \quad \text{for all }\, f \in \F(\bar\CC)\]
is a well-defined homomorphism. Moreover, we have
\[(\psi'\circ j)(\alpha)=   \psi'([\bar\alpha]) = \varphi(\bar\alpha) = \psi(\alpha) \]
for every $\alpha \in \CC$, as desired. Finally,  assume $\phi$ is also a homomorphism from $U$ into $H$ such that $\psi = \phi\circ j$. Then for every $\alpha \in \CC$ we have 
\[ \phi(j(\alpha))= \psi([\bar\alpha]) =  \psi'(j(\alpha)).\]
As the range of $j$ generates $U$ as a group, we get that $\phi=\psi'$.  
\end{proof}

\begin{rem} By universality, it follows readily that if $(U,j)$ and $(U', j')$ are universal groups for $\CC$, then they are uniquely isomorphic, in the sense that there is a unique isomorphism $\pi\:U\to U'$ 
making the following diagram commute:
\[
\xymatrix{
\CC \ar[r]^-j \ar[dr]_{j'}
&U \ar[d]^\pi_\simeq
\\
&U'
}
\]
\end{rem}

When we want to display the attachment of a universal group $U$ to a small category $\CC$ we write $U(\CC)$. We also sometimes write  $j_\CC$ instead of $j$ for the canonical cocycle from $\CC$ into $U(\CC)$. 

\begin{rem} If  $\DD$ is a small category and $\Phi: \CC \to \DD$ is a functor, then $j_\DD \circ \Phi$ is a cocycle from $\CC$ into $U(\DD)$, so there is a unique homomorphism $\Phi_{*}: U(\CC) \to U(\DD)$ such that $\Phi_{*}\circ j_\CC= j_\DD\circ \Phi$. 
\end{rem}

\begin{rem}
V.~Gould and M.~Kambites
\cite[Theorem~2.3]{GouKam}
have shown
that there exists a faithful functor $\sigma_\CC$ from $\CC$ into a left cancellative monoid  $M(\CC)$ such that $(M(\CC), \sigma_\CC)$ is universal.
  By considering $U(\CC)$ as a monoid we get that there exists a unique homomorphism $\ell: M(\CC) \to U(\CC)$ such that $j_\CC = \sigma_\CC\circ \ell$.
\end{rem}

\begin{rem}
Assume that $\CC$ is left cancellative. Recall from \cite{bkqs} that we write ${\rm ZM}(\CC)$ for the inverse semigroup with zero consisting of zigzag maps associated with $\CC$. Let $\tau\: \CC \to {\rm ZM}(\CC)$ denote the canonical map, i.e., $\tau_\alpha\: s(\alpha)\CC \to \alpha\CC$ is given by $\tau_\alpha(\beta) = \alpha\beta$ for each $\alpha \in \CC$. As observed by S.~Margolis
(see \cite{MilSte}),
every inverse semigroup $S$ with zero has a universal group $\mathcal{U}(S)$.
To state the universal property of this group,  recall that a partial homomorphism from $S$ to a group $G$ is a map $\varphi\: S\setminus \{0\} \to G$ such that $\varphi(st)= \varphi(s)\varphi(t)$ whenever $st \neq 0$.  Then the identity map on $S$ induces a partial homomorphism $\iota$ from $S$ to $\mathcal{U}(S)$, and any partial homomorphism from $S$ into a group factors through $\iota$ via a group homomorphism. Applying this when $S = {\rm ZM}(\CC)$, we get a cocycle  $\eta\: \CC \to \mathcal{U}({\rm ZM}(\CC))$ by setting $\eta = \iota \circ \tau$. Proposition~\ref{uni group} then gives a unique homomorphism $\eta': U(\CC) \to \mathcal{U}({\rm ZM}(\CC))$ such that $\eta = \eta'\circ j_\CC$, i.e., 
making the diagram 
\[
\xymatrix{
\CC \ar[r]^-{j_{\CC}} \ar[d]_\tau \ar[dr]^\eta
&U(\CC) \ar@{-->}[d]^{\eta'}
\\
{\rm ZM}(\CC) \ar[r]^-\iota & \mathcal{U}({\rm ZM}(\CC))
}
\]
commute.
\end{rem}

We now discuss the relationships among the universal group of a small category $\CC$,  the fundamental groupoid of $\CC$, and the fundamental group of $\CC$.
We first give a short review of the latter two concepts.
 Much of this is in \cite[Appendix~A]{BH}
and \cite{pqr:cover}. The development in the latter paper is for higher-rank graphs,
but carries over to the setting of small categories by deleting everything to do with the degree functors. The main motivation in \cite[Appendix~A]{BH} and \cite{pqr:cover} comes from the theory of coverings.

Let 
$\GG$ be a groupoid,
and $i\:\CC\to\GG$ a functor.
The pair $(\GG,i)$ is called a \emph{fundamental groupoid}
of $\CC$ if
for every functor $\phi$ from $\CC$ to a groupoid $\HH$
there exists a unique groupoid morphism $\phi'$ making the following diagram commute:
\[
\xymatrix{
\CC \ar[r]^-i \ar[dr]_\phi
&\GG \ar@{-->}[d]^{\phi'}_{!}
\\
&\HH
}
\]
The  functor $i\:\CC\to\GG$ takes $\CC^0$ bijectively onto $\GG^0$,
and we identify them, so that $\CC^0=\GG^0$.
It follows as usual from universality that any two fundamental groupoids $(\GG,i)$ and $(\GG',i')$ of $\CC$ are
\emph{uniquely isomorphic} in the sense that there is a unique groupoid isomorphism  making the diagram
\[
\xymatrix{
\CC \ar[r]^-i \ar[dr]_{i'}
&\GG \ar[d]^\simeq
\\
&\GG'
}
\]
commute.

Every small category $\CC$ has a fundamental groupoid $(\GG(\CC), i_\CC)$:  a construction may be found for example in \cite[Section 19.1]{schubert}, where
$\GG(\CC)=\CC[\CC\inv]$ is the groupoid obtained as the category of fractions associated to $\CC$.
Alternatively, the reader may consult \cite[III.\ CC.A]{BH}, or \cite[Section II.3.1]{DDGKM},
where the fundamental groupoid of $\CC$ is called the \emph{enveloping groupoid}.
We stress that, in general, a  small category $\CC$ does not embed in $\GG(\CC)$. Of course, $\CC$ would have to be cancellative, but this is not enough.
It is also not enough for $\CC$ to be a higher-rank graph (see \cite[Section~7]{pqr:groupoid}), although the embedding does hold for graphs of rank 1, i.e., directed graphs.
Some sufficient Ore-type conditions for this to hold are given in \cite[Section II.3.2]{DDGKM}. 

The assignment $\CC \mapsto i_\CC$
gives a natural transformation from the identity functor to the functor $\CC\mapsto \GG(\CC)$,
in the strong sense that
for any functor $\phi\:\CC\to\DD$ between small categories
the map $\phi_*$ is the \emph{unique} groupoid morphism
making the diagram
\[
\xymatrix{
\CC \ar[d]_\phi \ar[r]^-{i_\CC}
&\GG(\CC) \ar[d]^{\phi_*}
\\
\DD \ar[r]_-{i_\DD}
&\GG(\DD)
}
\]
commute.

We next recall that  $\CC$ is said to be \emph{connected} if the equivalence relation generated by $\{(v, w) \in \CC^0\times\CC^0 : v\CC w \neq \varnothing\}$ is $\CC^0\times\CC^0$.
This happens if and only if $\GG(\CC)$ is connected, i.e., if $v\GG(\CC)w \neq \varnothing$ for all $v, w\in \CC^0$. 

Assume that $\CC$ is connected.
Then the isotropy group
\[
\pi(\CC,v):=v\GG(\CC)v
\]
is called the \emph{fundamental group of  $\CC$ at  $v\in \CC^0$}. For any $v,w$ in $\CC^0$
the groups $\pi(\CC,v)$ and $\pi(\CC,w)$ are isomorphic, so any of these groups gives the \emph{fundamental group $\pi(\CC)$ of $\CC$}. Moreover, if $v \in \CC^0$ then there exists a cocycle $\eta$ from $\GG(\CC)$ into $\pi(\CC, v)$. Indeed, for each $w \in \CC^0$, we can pick 
$t_w \in w\GG(\CC)v$, 
and  set \[\eta(a) = t_{r(a)}\inv a t_{s(a)}\] for each $a \in \GG(\CC)$. The map $\eta\circ i_\CC \: \CC\to \pi(\CC, v)$ is then a cocycle on $\CC$. 
It can be shown that  the canonical projection from the skew product $\CC\times_{\eta} \pi(\CC,v)$ onto $\CC$ gives a  universal covering of $\CC$ (cp.\ \cite{pqr:cover}).

\begin{rem}
Since $U(\CC)$ is a groupoid, there is a unique cocycle
$k: \mathcal{G}(\CC)\to U(\CC)$ such that $j = k\circ i_\CC$. More generally, every cocycle $\psi\:\CC\to H$ into a group $H$ factors uniquely through a cocycle $\phi\:\GG(\CC) \to H$.
It follows that $U(\CC)$ is isomorphic to $U(\GG(\CC))$. Thus, to study universal groups of small categories we might as well consider groupoids, as we do in the sequel.
\end{rem}

Let $\GG$ be a groupoid. For each $x\in \GG^0$ let $\GG_x=x\GG x$ denote the isotropy group of $\GG$ at $x$.
We first assume that $\GG$ is connected.
Then it is well-known that $\GG$ is isomorphic as a groupoid to the product  
$(\GG^0\times \GG^0)\times \GG_x$
for any $x\in\GG^0$, where $\GG^0\times \GG^0$ denotes the groupoid associated to the full equivalence relation on $\GG^0$.
Indeed, begin by fixing $x\in\GG^0$,
and for each $y\in\GG^0$ fix $t_y\in y\GG x$,
with $t_x=x$.
Then the map
\[
\theta\:\GG\to (\GG^0\times \GG^0)\times \GG_x
\]
defined for any $a\in z\GG y$ by
\[
\theta(a)=\bigl((z,y),t_z\inv a t_y\bigr)
\]
is an isomorphism, with inverse given by $\theta\inv((z, y), h) = t_zht_y\inv$.

In the above discussion, we used a convenient type of subset of $\GG$ that we will need again, so we pause to give it a name:

\begin{defn}\label{max tree}
Let $\GG$ be a connected groupoid, and pick $x\in\GG^0$.
For each $y\in \GG^0$ choose $t_y\in y\GG x$, with $t_x=x$.
We call $T=\{t_y\}_{x\in\GG^0}$ a \emph{maximal tree} in $\GG$ \emph{rooted at $x$}.
\end{defn}

\begin{prop}\label{universal group}
Let $\GG$ be a connected groupoid.
Fix $x\in \GG^0$,
and let $T=\{t_y\}_{y\in\GG^0}$ be a maximal tree in $\GG$ rooted at $x$.
Let $H=\GG_x$ be the isotropy group of $\GG$ at $x$.
Let $S=\GG^0\minus \{x\}$, and define
\[
j\:\GG\to \F(S)*H
\]
by
\[
j(a)=\bar{z}(t_z\inv at_y)\bar{y}\,\inv\righttext{for}a\in z\GG y.
\]
Then $(\F(S)*H,j)$ is a universal group of $\GG$.
\end{prop}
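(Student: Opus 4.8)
The plan is to verify directly that $j$ is a cocycle and then that the pair $(\F(S)*H,j)$ satisfies the universal property for cocycles on $\GG$; since the existence of \emph{some} universal group is already known, one could alternatively try to deduce the result from the isomorphism $\GG\cong(\GG^0\times\GG^0)\times\GG_x$ recalled just before the proposition (together with the observations that the universal group of a complete equivalence-relation groupoid on a set $V$ is free on $V\minus\{\text{pt}\}$, that the universal group of a group is the group itself, and that $U$ carries products of groupoids to free products), but matching up the resulting map with the given $j$ is extra work, so the hands-on route is shorter. Throughout I would adopt the conventions $\bar x=e$ (the empty word, since $x\notin S$) and $t_x=x$, so that $j(a)=\bar z\,(t_z\inv a\,t_y)\,\bar y\inv$ makes sense for all $a\in z\GG y$ with $z,y\in\GG^0$, and I would record at the outset that $t_z\inv a\,t_y$ genuinely lies in $x\GG x=H$.

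First I would check that $j$ is a cocycle: if $a\in z\GG y$ and $b\in y\GG w$ then
\[
j(a)j(b)=\bar z\,(t_z\inv a\,t_y)\,\bar y\inv\bar y\,(t_y\inv b\,t_w)\,\bar w\inv=\bar z\,(t_z\inv ab\,t_w)\,\bar w\inv=j(ab),
\]
using that $t_y t_y\inv$ is the identity at $y$ and $\bar y\bar y\inv$ the identity of $\F(S)$; in particular $j(v)=\bar v\,(t_v\inv t_v)\,\bar v\inv=e$ for $v\in\GG^0$.

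Next, given a cocycle $\psi\:\GG\to K$ into a group $K$, I would produce $\psi'$ via the universal property of the free product: the restriction $\psi|_H\:H\to K$ is a group homomorphism (since $H=\GG_x$ is a subgroupoid of $\GG$ which is a group and $\psi$ is a functor), and $\bar y\mapsto\psi(t_y)$ defines a map $S\to K$, so together these determine a unique homomorphism $\psi'\:\F(S)*H\to K$ with $\psi'(h)=\psi(h)$ for $h\in H$ and $\psi'(\bar y)=\psi(t_y)$ for $y\in S$. For $a\in z\GG y$, using $\psi(t_z\inv)=\psi(t_z)\inv$ (which holds because functors preserve identities and inverses), I would compute
\[
\psi'(j(a))=\psi(t_z)\,\psi(t_z)\inv\,\psi(a)\,\psi(t_y)\,\psi(t_y)\inv=\psi(a),
\]
so $\psi'\circ j=\psi$; this also covers $z=x$ or $y=x$ because $\psi(t_x)=\psi(x)=e$ is consistent with $\psi'(\bar x)=e$.

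Finally, for uniqueness, I would note that $j(t_y)=\bar y\,(t_y\inv t_y\,t_x)\,\bar x\inv=\bar y$ for $y\in S$ and $j(h)=\bar x\,(t_x\inv h\,t_x)\,\bar x\inv=h$ for $h\in H$, so the image of $j$ contains the generating set $S\cup H$ of $\F(S)*H$; hence any homomorphism $\phi$ with $\phi\circ j=\psi$ agrees with $\psi'$ on generators and therefore equals $\psi'$. I do not expect a genuine obstacle here: the only point requiring care is the bookkeeping with the conventions $\bar x=e$, $t_x=x$ and the fact that the ``middle term'' $t_z\inv a\,t_y$ lands in the group $H$, after which each assertion reduces to a one-line computation in $\F(S)*H$.
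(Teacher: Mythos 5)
Your proof is correct and follows essentially the same route as the paper's: verify that $j$ is a cocycle by the same one-line computation, then obtain $\psi'$ from the universal property of the free product by sending $\bar y\mapsto\psi(t_y)$ and restricting $\psi$ to $H$, and check $\psi'\circ j=\psi$. Your explicit handling of the conventions $\bar x=e$, $t_x=x$ and your uniqueness argument (that $j(t_y)=\bar y$ and $j(h)=h$, so the image of $j$ generates $\F(S)*H$) are welcome additions that the paper leaves implicit.
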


\begin{proof}
First we verify that $j$ is a cocycle:
for $a\in z\GG y,b\in y\GG u$,
\begin{align*}
j(a)j(b)
&=\bar{z}(t_z\inv at_y)\bar{y}\,\inv
\bar{y}(t_y\inv bt_u)\bar{u}\,\inv
\\&=\bar{z}(t_z\inv abt_u)\bar{u}\,\inv
\\&=j(ab).
\end{align*}

We now verify the universal property.
Let $\psi\:\GG\to K$ be any cocycle.
To define a group homomorphism
\[
\psi'\:\F(S)*H\to K,
\]
we need to choose elements $\{k_y\}_{y\in S}$ in $K$
and a homomorphism $\rho\:H\to K$.
We choose
\[
k_y=\psi(t_y)\midtext{and}\rho=\psi|_H,
\]
and let $\psi'\:\F(S)*H\to K$ be the associated homomorphism.
Then for $a\in z\GG y$ we have
\begin{align*}
(\psi'\circ j)(a)
&=\psi'\left(\bar{z}(t_z\inv at_y)\bar{y}\,\inv\right)
\\&=k_z\,\rho\left(t_z\inv at_y\right)k_y\inv
\\&=\psi(t_z)\psi\left(t_z\inv at_y\right)\psi(t_y)\inv
\\&=\psi\left(t_zt_z\inv at_yt_y\,\inv\right)
\\&=\psi(a).
\qedhere
\end{align*}
\end{proof}

\begin{cor} \label{uni group connected cat}
Assume that $\CC$ is a connected small category. Pick any $v\in \CC^0$ and set $S=\CC^0\minus{\{v\}}$. Then $U(\CC) \simeq \F(S) * \pi(\CC)$.
\end{cor}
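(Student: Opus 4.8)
The plan is to combine \propref{universal group} with the facts already established about the fundamental groupoid. First I would invoke the remark preceding \propref{universal group}, which says that $U(\CC)$ is isomorphic to $U(\GG(\CC))$: every cocycle $\psi\:\CC\to H$ into a group factors uniquely through a cocycle $\GG(\CC)\to H$, so $\GG(\CC)$ and $\CC$ have the same universal group up to canonical isomorphism. Since $\CC$ is connected, $\GG(\CC)$ is a connected groupoid, so \propref{universal group} applies to $\GG=\GG(\CC)$.

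Next I would pick $v\in\CC^0=\GG(\CC)^0$, choose a maximal tree $T=\{t_w\}_{w\in\GG(\CC)^0}$ in $\GG(\CC)$ rooted at $v$ in the sense of \defnref{max tree}, and set $H=\GG(\CC)_v=v\GG(\CC)v$, which is by definition the fundamental group $\pi(\CC,v)$, hence (a representative of) $\pi(\CC)$. With $S=\CC^0\minus\{v\}$, \propref{universal group} gives that $(\F(S)*H,j)$ is a universal group of $\GG(\CC)$, so by the previous paragraph $(\F(S)*H, j\circ i_\CC)$ is a universal group of $\CC$. Finally, by the uniqueness-up-to-isomorphism of universal groups (the remark after \propref{uni group}), $U(\CC)\simeq\F(S)*H=\F(S)*\pi(\CC)$, which is exactly the claimed statement.

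I don't expect any serious obstacle here: this corollary is essentially a bookkeeping assembly of \propref{universal group}, the identification of $\pi(\CC)$ with the isotropy group $v\GG(\CC)v$, and the already-recorded equivalence $U(\CC)\simeq U(\GG(\CC))$. The only point requiring a word of care is to note that the answer does not depend on the choice of $v$ or of the maximal tree $T$: this is immediate from uniqueness of the universal group, but it is also consistent with the already-noted fact that $\pi(\CC,v)$ and $\pi(\CC,w)$ are isomorphic for any $v,w\in\CC^0$. I would state the proof in two or three sentences along exactly these lines, citing \propref{universal group} and the relevant remarks rather than reproving anything.

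\begin{proof}
Since $\CC$ is connected, the fundamental groupoid $\GG(\CC)$ is a connected groupoid with $\GG(\CC)^0=\CC^0$. Pick $v\in\CC^0$, let $T=\{t_w\}_{w\in\CC^0}$ be a maximal tree in $\GG(\CC)$ rooted at $v$, and let $H=v\GG(\CC)v=\pi(\CC,v)$, a representative of $\pi(\CC)$. With $S=\CC^0\minus\{v\}$, \propref{universal group} shows that $\F(S)*H$, equipped with the cocycle $j$ defined there, is a universal group of $\GG(\CC)$. By the remark preceding \propref{universal group}, every cocycle from $\CC$ into a group factors uniquely through a cocycle from $\GG(\CC)$, so $U(\CC)$ is isomorphic to $U(\GG(\CC))$; more precisely, $\F(S)*H$ together with $j\circ i_\CC$ is a universal group of $\CC$. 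By the uniqueness of universal groups (the remark following \propref{uni group}), $U(\CC)\simeq\F(S)*H=\F(S)*\pi(\CC)$.
\end{proof}
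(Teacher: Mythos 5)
Your proposal is correct and follows exactly the route the paper intends: the corollary is stated without proof precisely because it is the immediate combination of the remark that $U(\CC)\simeq U(\GG(\CC))$ with \propref{universal group} applied to the connected groupoid $\GG(\CC)$, whose isotropy group at $v$ is $\pi(\CC,v)$. Your assembly of these pieces, including the appeal to uniqueness of universal groups, matches the paper's reasoning.
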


A maximal connected subgroupoid of a groupoid $\GG$ is called a \emph{component} of $\GG$. Clearly, $\GG$ is then a disjoint union of its components. Letting $\{\GG_\lambda\}_{\lambda \in \Lambda}$ denote a partition of $\GG$ into its components, we get \[U(\GG) \simeq \star_{\lambda \in \Lambda} U(\GG_\lambda).\] 
The proof of this fact is quite standard, and goes as follows. First, for each $\lambda$, let  $j_\lambda\:\GG_\lambda \to U(\GG_\lambda)$ denote the canonical functor, and  identify $U(\GG_\lambda)$ with its canonical image in the free product $G=\star_{\lambda \in \Lambda} U(\GG_\lambda)$. Then we get a functor $j\:\GG\to G$ by defining $j(a) = j_\lambda(a)$ whenever $a \in \GG_\lambda$. Next, assume that $\phi$ is a cocycle from $\GG$ into some group $H$. For each $\lambda$, let $\phi_\lambda$ denote the restriction of $\phi$ to $\GG_\lambda$, and let $\phi'_\lambda\:U(\GG_\lambda) \to H $ denote the homomorphism associated to this cocycle.  Using the universal property of $G$, we get that
there exists a unique homomorphism $\phi'\:G\to H$ such that $\phi'$ restricts to $\phi'_\lambda$ on $U(\GG_\lambda)$ for each $\lambda$. By construction, we have that $\phi' \circ j = \phi$. Hence, we have shown that $(G,j)$ is a universal group of $\GG$, as desired.

Combining this result with Proposition~\ref{universal group},
we get the following description of the universal group of a groupoid.

\begin{thm} Let $\GG$ be a groupoid and let $\{\GG_\lambda\}_{\lambda \in \Lambda}$ denote a partition of $\GG$ into its components. For each $\lambda\in \Lambda$, pick $x_\lambda \in \GG_\lambda^0$ and  set $S_\lambda = \GG_\lambda^0 \minus \{x_\lambda\}$. Then  $U(\GG)$  is isomorphic to the free product 
\[\star_{\lambda \in \Lambda} \big(\F(S_\lambda) * \pi(\GG_\lambda)\big).\]
\end{thm}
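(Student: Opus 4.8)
The plan is to combine the two structural results that immediately precede this theorem in the text. First, Proposition~\ref{universal group} computes the universal group of a \emph{connected} groupoid: if $\GG_\lambda$ is connected, $x_\lambda\in\GG_\lambda^0$, $S_\lambda=\GG_\lambda^0\minus\{x_\lambda\}$, and $H_\lambda=(\GG_\lambda)_{x_\lambda}=x_\lambda\GG_\lambda x_\lambda$ is the isotropy group (which is the fundamental group $\pi(\GG_\lambda)$), then $U(\GG_\lambda)\simeq \F(S_\lambda)*H_\lambda=\F(S_\lambda)*\pi(\GG_\lambda)$. Second, the preceding paragraph establishes that for \emph{any} groupoid $\GG$ with component decomposition $\{\GG_\lambda\}_{\lambda\in\Lambda}$, one has $U(\GG)\simeq \star_{\lambda\in\Lambda}U(\GG_\lambda)$, via the explicit functor $j$ assembled from the $j_\lambda$'s together with its universal property. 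These two facts chain together.

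Concretely, I would argue as follows. Fix the partition $\{\GG_\lambda\}_{\lambda\in\Lambda}$ of $\GG$ into components. Each $\GG_\lambda$ is by definition a connected groupoid, so for each $\lambda$ we may apply Proposition~\ref{universal group} with the chosen root $x_\lambda\in\GG_\lambda^0$ and $S_\lambda=\GG_\lambda^0\minus\{x_\lambda\}$; this gives a universal group $U(\GG_\lambda)\simeq \F(S_\lambda)*\pi(\GG_\lambda)$, where we use that the isotropy group $(\GG_\lambda)_{x_\lambda}$ is precisely the fundamental group $\pi(\GG_\lambda)$ by the definition recalled earlier in the section. Then invoke the displayed isomorphism $U(\GG)\simeq \star_{\lambda\in\Lambda}U(\GG_\lambda)$ from the paragraph above the theorem. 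Substituting the expression for each $U(\GG_\lambda)$ into this free product yields
\[
U(\GG)\simeq \star_{\lambda\in\Lambda}\bigl(\F(S_\lambda)*\pi(\GG_\lambda)\bigr),
\]
which is the claim. (Here one uses the trivial associativity/coherence of iterated free products: a free product of free products is the free product of the constituent factors, which makes the nesting of $\star$ and $*$ harmless.)

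Since both ingredients have already been proved in the excerpt, there is essentially no obstacle here — the theorem is a formal corollary and its proof is a two-sentence combination. The only point requiring a word of care is the identification $(\GG_\lambda)_{x_\lambda}=\pi(\GG_\lambda)$: this is legitimate because $\GG_\lambda$ is connected, so the isotropy groups at different basepoints are all isomorphic and any one of them serves as \emph{the} fundamental group $\pi(\GG_\lambda)$; the concrete isomorphism type of the right-hand side does not depend on the choice of $x_\lambda$ up to isomorphism (though the presentation does). Thus the proof reads:

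\begin{proof}
Each component $\GG_\lambda$ is a connected groupoid, so Proposition~\ref{universal group} gives $U(\GG_\lambda)\simeq \F(S_\lambda)*(\GG_\lambda)_{x_\lambda}=\F(S_\lambda)*\pi(\GG_\lambda)$. Combining this with the isomorphism $U(\GG)\simeq \star_{\lambda\in\Lambda}U(\GG_\lambda)$ established above yields $U(\GG)\simeq \star_{\lambda\in\Lambda}\bigl(\F(S_\lambda)*\pi(\GG_\lambda)\bigr)$.
\end{proof}
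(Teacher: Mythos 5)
Your proposal is correct and matches the paper exactly: the theorem is stated there as an immediate consequence of Proposition~\ref{universal group} applied to each connected component together with the decomposition $U(\GG)\simeq \star_{\lambda\in\Lambda}U(\GG_\lambda)$ established in the preceding paragraph, and the paper gives no further argument.
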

It follows that the universal group of a small category $\CC$ is nontrivial whenever the fundamental groupoid of $\CC$ is nontrivial.

\section{Coverings and connectedness of skew products}\label{sec:cover}

We will use the theory of coverings of small categories to help decide when skew products are connected.
Much of the following is in \cite[Appendix~A]{BH}
and \cite{pqr:cover}. The development in the latter paper is for $k$-graphs,
but carries over to the setting of small categories by deleting everything to do with the degree functors.
Thus, many of the results are just a routine reformulation of the similar results from \cite{pqr:cover}, 
and we therefore skip their proofs.

\begin{defn}
A \emph{covering} of a small category $\CC$ is a 
surjective functor
$p\:\DD\to\CC$,
where $\DD$ is a small category,
such that for every $v\in \DD^0$,
the restrictions
\[
\DD v\to \CC p(v)\midtext{and}v\DD\to p(v)\CC
\]
are bijective.
If 
$q\:\EE\to\CC$ is another covering,
a \emph{morphism} from $(\DD,p)$ to $(\EE,q)$ is a functor
$\phi\:\DD\to\EE$ making the diagram
\[
\xymatrix{
\DD \ar[rr]^-\phi \ar[dr]_p
&&\EE \ar[dl]^q
\\
&\CC
}
\]
commute.
A covering $p\:\DD\to\CC$ is \emph{connected} if $\DD$ (and hence $\CC$) is connected.
\end{defn}

The class of all coverings of a small category $\CC$ is a category with the above morphisms.

\begin{defn}
An \emph{action} of  small category $\CC$ on a set $V$
is a functor
from $\CC$ to the category of sets such that
\begin{enumerate}
\item
$V$ is the disjoint union of the sets $V_x$ associated to the vertices $x$ of $\CC$;

\item
for every $\alpha\in y\CC x$,
the associated map
$v\mapsto \alpha v$ from $V_x$ to $V_y$
is bijective.
\end{enumerate}
We write the action as $\CC\act V$.
\end{defn}

The above definition reduces to the familiar notion of groupoid action when $\CC$ is a groupoid.
Much of the theory of groupoid actions carries over to category actions.
For example:

\begin{prop}\label{tfm cat}
Let $\CC\act V$ be an action of a small category $\CC$.
Then the set
\[
\CC*V=\{(\alpha,v): \alpha\in\CC, v\in V_{s(\alpha)}\}
\]
becomes a small category with vertex set
\[
(\CC*V)^0=\CC^0*V\subset \CC*V
\]
and operations
\begin{itemize}
\item
$s(\alpha,v)=(s(\alpha),v)$

\item
$r(\alpha,v)=(r(\alpha),\alpha v)$

\item
$(\alpha,\beta w)(\beta,w)=(\alpha\beta,w)$.
\end{itemize}
\end{prop}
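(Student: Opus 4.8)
The plan is to verify directly the defining axioms of a small category listed in \secref{lcsc}: that $s$ and $r$ take values in $(\CC*V)^0$, that the partially defined multiplication has domain exactly the set of pairs $\bigl((\alpha,u),(\beta,w)\bigr)$ with $s(\alpha,u) = r(\beta,w)$, and that axioms (1)--(4) hold. Throughout I would use only two structural facts about the action: that it is a functor, so $(\alpha\beta)v = \alpha(\beta v)$ whenever $s(\alpha) = r(\beta)$ and $v \in V_{s(\beta)}$, and each identity morphism $v \in \CC^0$ acts on $V_v$ as $\id_{V_v}$; and, from the definition of an action, that the map attached to $\alpha$ carries $V_{s(\alpha)}$ bijectively onto $V_{r(\alpha)}$.

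First I would check that the data is well defined. For $(\alpha,v) \in \CC*V$ we have $v \in V_{s(\alpha)}$, hence $\alpha v \in V_{r(\alpha)}$, so $s(\alpha,v) = (s(\alpha),v)$ and $r(\alpha,v) = (r(\alpha),\alpha v)$ both lie in $\CC^0 * V = (\CC*V)^0$. Comparing $s(\alpha,u) = (s(\alpha),u)$ with $r(\beta,w) = (r(\beta),\beta w)$ shows that $(\alpha,u)$ and $(\beta,w)$ are composable precisely when $s(\alpha) = r(\beta)$ and $u = \beta w$; in that case $w \in V_{s(\beta)} = V_{s(\alpha\beta)}$, so the prescribed product $(\alpha,\beta w)(\beta,w) = (\alpha\beta,w)$ is a legitimate element of $\CC*V$. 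Thus the multiplication is a genuine partially defined operation with the required domain.

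The remaining axioms then reduce to the corresponding axioms in $\CC$ combined with the two facts above, and I would dispatch them in order. Axioms (1) and (3) are immediate from $r(\alpha\beta) = r(\alpha)$, $s(\alpha\beta) = s(\beta)$, $(\alpha\beta)w = \alpha(\beta w)$, and $v\cdot v' = v'$ for $v \in \CC^0$, $v' \in V_v$. Axiom (2) follows because for a composable triple the second coordinates are forced to be $\beta\gamma w$, $\gamma w$, $w$, and both bracketings of the product evaluate to $(\alpha\beta\gamma, w)$ using associativity in $\CC$ and $(\beta\gamma)w = \beta(\gamma w)$. For axiom (4) one first checks that $\bigl(r(\alpha),\alpha v\bigr)$ and $(\alpha,v)$, respectively $(\alpha,v)$ and $\bigl(s(\alpha),v\bigr)$, form composable pairs, and then computes $(r(\alpha)\alpha, v) = (\alpha,v)$ and $(\alpha s(\alpha), v) = (\alpha,v)$.

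I do not anticipate a genuine obstacle; the argument is bookkeeping. The only point that needs attention is to keep track of which element of $V$ each tuple carries in its second coordinate, and to confirm that the intermediate tuples are legitimate members of $\CC*V$ and form composable pairs before applying the multiplication rule --- in particular in the verification of associativity and of axiom (4). One should avoid invoking any cancellation in $V$; it is the functoriality of the action that makes everything fit together.
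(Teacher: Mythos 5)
Your verification is correct, and it is exactly the direct axiom-checking that the paper has in mind: its own ``proof'' simply declares the result a completely routine adaptation of the groupoid case and omits the details you have written out. The composability analysis (that $(\alpha,u)$ and $(\beta,w)$ compose iff $s(\alpha)=r(\beta)$ and $u=\beta w$) and the use of functoriality of the action in place of any cancellation are precisely the right bookkeeping.
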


\begin{proof}
As mentioned at the start of the proof of \cite[Proposition~3.3]{pqr:cover} in the case of $k$-graphs, this is a completely routine adaptation of the special case where $\CC$ is a groupoid.
\end{proof}

Note that if $\CC$ is a groupoid, then so is $\CC*V$.

\begin{lem}\label{actions}
Let $\CC$ be a small category with fundamental groupoid $(\GG,i)$.
For every action $\CC\act V$ of  $\CC$ on a set $V$
there is a unique action $\GG\act V$ such that
\[
i(\alpha)v=\alpha v\righttext{for all}(\alpha,v)\in \CC*V.
\]
Moreover, every action of $\GG$ arises in this way from a unique action of $\CC$.
\end{lem}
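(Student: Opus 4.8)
The plan is to recognize an action on a set $V$ as a functor into a fixed small groupoid of bijections, and then read the lemma off from the universal property of $(\GG,i)$. Fix the set $V$ and let $\Sigma(V)$ denote the groupoid whose objects are the subsets of $V$ and whose morphisms are the bijections between them; since $V$ is a set, $\Sigma(V)$ is a genuine (small) groupoid. The first step I would carry out is the bookkeeping observation that an action $\CC\act V$ is exactly the same data as a functor $F\:\CC\to\Sigma(V)$ whose object-values $\{F(x)\}_{x\in\CC^0}$ are pairwise disjoint with union $V$: given an action one sets $F(x)=V_x$ and $F(\alpha)=(v\mapsto\alpha v)$, and axioms (1)--(2) of an action are precisely the statement that $F$ is a functor; conversely such an $F$ yields an action via $V_x:=F(x)$ and $\alpha v:=F(\alpha)(v)$. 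The same equivalence holds verbatim with $\CC$ replaced by any groupoid $\GG$, the bijectivity of the structure maps then being automatic.

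Next, starting from an action $\CC\act V$ with associated functor $F\:\CC\to\Sigma(V)$, I would apply the universal property of the fundamental groupoid with $\HH=\Sigma(V)$, obtaining a unique groupoid morphism $F'\:\GG\to\Sigma(V)$ with $F'\circ i=F$. Since $i$ carries $\CC^0$ bijectively onto $\GG^0$ and the two are identified, the object-values of $F'$ are again the sets $V_x$, hence pairwise disjoint with union $V$, so $F'$ corresponds to an action $\GG\act V$; and $F'\circ i=F$ unwinds exactly to $i(\alpha)v=\alpha v$ for all $(\alpha,v)\in\CC*V$. For uniqueness I would observe that any action $\GG\act V$ satisfying this compatibility has associated functor $F''$ with $F''\circ i=F$, forcing $F''=F'$ by the uniqueness clause of the universal property, so the two actions coincide.

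For the final ``moreover'' assertion I would reverse the correspondence: an action $\GG\act V$ gives a functor $F'\:\GG\to\Sigma(V)$, and restricting along $i$ produces $F:=F'\circ i\:\CC\to\Sigma(V)$, an action $\CC\act V$ because its object-values coincide with those of $F'$. By the construction of the previous step this $\CC$-action extends to the unique $\GG$-action whose functor extends $F$, and since $F'$ is itself such an extension, that $\GG$-action is the one we began with. Uniqueness of the $\CC$-action is then immediate, since any action $\CC\act V$ with functor $F_1$ that extends to the given $\GG\act V$ must satisfy $F_1=F'\circ i=F$.

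The step I expect to be the only real obstacle is the first, bookkeeping one: being careful that the dictionary between actions and functors into $\Sigma(V)$ is exact on both sides, and that disjointness and full coverage of $V$ are preserved when passing between $\CC$ and $\GG$ --- which works precisely because $i$ is a bijection on objects. Once that is set up, the lemma follows directly from the universal property, with no further computation, and in particular without using that $i(\CC)$ generates $\GG$.
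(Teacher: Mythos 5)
Your proposal is correct and follows essentially the same route as the paper: the paper's proof also identifies an action of $\CC$ on $V$ with a functor into a groupoid of bijections (there, the bijections among the sets $\{V_x\}_{x\in\CC^0}$, rather than your slightly larger $\Sigma(V)$, but this is immaterial) and then invokes the universal property of $(\GG,i)$ for existence and uniqueness, recovering the converse by composing with $i$. Your write-up merely makes the bookkeeping between actions and functors more explicit.
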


\begin{proof}
An action of $\CC$ is a functor from $\CC$ into the groupoid of bijections among the sets $\{V_x\}_{x\in \CC^0}$. By universality, this functor factors uniquely through the canonical functor $i\:\CC\to \GG$,
giving an action $\GG\act V$.
In the other direction, any action of $\GG$,
when composed with $i$,
trivially gives an action of $\CC$, from which the groupoid action can be recovered by the above argument.
\end{proof}

The preceding lemma gives a bijection between actions of $\CC$ and of $\GG(\CC)$.

\begin{prop}\cite[Theorem~3.7]{pqr:cover}\label{gpd cover prop}
Let $p\:\DD\to\CC$ be a covering,
and let 
$(\HH,j)$ and $(\GG,i)$
be the fundamental groupoids of 
$\DD$ and $\CC$, 
respectively.
Then
the associated groupoid morphism $p_*\:\HH\to \GG$ is also a covering.
Equivalently, the map
\[
(p_*,s)\:\HH\to \GG*\DD^0
\]
is a groupoid isomorphism.
\end{prop}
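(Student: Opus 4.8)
The plan is to reduce the assertion to two facts: that a covering exhibits $\DD$ as a transformation category over $\CC$, and that the fundamental‑groupoid construction commutes with forming transformation categories. First I would observe that the covering $p\:\DD\to\CC$ induces an action $\CC\act\DD^0$ in the sense of the definition preceding \propref{tfm cat}: writing $\DD^0_x=p\inv(x)$ for $x\in\CC^0$ and using that $\DD v\to\CC p(v)$ is bijective, each $\alpha\in y\CC x$ lifts, for every $v\in\DD^0_x$, to a unique $\wilde\alpha_v\in\DD v$ with $p(\wilde\alpha_v)=\alpha$; setting $\alpha\cdot v=r(\wilde\alpha_v)$ gives the action, functoriality being immediate from uniqueness of lifts. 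The same uniqueness shows that $\delta\mapsto\bigl(p(\delta),s(\delta)\bigr)$ is an isomorphism of small categories $\DD\variso\CC*\DD^0$, with inverse $(\alpha,v)\mapsto\wilde\alpha_v$, so I may identify $\DD=\CC*\DD^0$. By \lemref{actions} the action $\CC\act\DD^0$ extends uniquely to an action $\GG\act\DD^0$, and \propref{tfm cat} applied to the groupoid $\GG$ produces the transformation groupoid $\GG*\DD^0$, which is again a groupoid; this is the object appearing in the statement.

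The key lemma I would then establish is that, for \emph{any} action $\CC\act V$, the pair $(\GG*V,\kappa)$ is a fundamental groupoid of $\CC*V$, where $\kappa\:\CC*V\to\GG*V$ is $(\alpha,v)\mapsto(i(\alpha),v)$. That $\kappa$ is a functor into a groupoid is a routine check. For the universal property, given a functor $\phi$ from $\CC*V$ into a groupoid $\KK$, I would manufacture an auxiliary groupoid $\LL$ with object set $\CC^0$ in which a morphism $x\to y$ is a bijection $\sigma\:V_x\to V_y$ together with a family of morphisms $\phi(x,v)\to\phi(y,\sigma(v))$ in $\KK$ indexed by $v\in V_x$, composed componentwise; then $\alpha\mapsto\bigl(v\mapsto\alpha v,\ (\phi(\alpha,v))_v\bigr)$ is a functor $\CC\to\LL$, so by the universal property of $\GG$ it factors through a unique groupoid morphism $\GG\to\LL$, and reading off its $\KK$‑components yields the desired extension $\phi'\:\GG*V\to\KK$ with $\phi'\circ\kappa=\phi$. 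Uniqueness of $\phi'$ holds because $\kappa(\CC*V)$ together with its inverses generates $\GG*V$ as a groupoid.

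With the lemma in hand, $\HH$, being a fundamental groupoid of $\DD\cong\CC*\DD^0$, is canonically isomorphic to $\GG*\DD^0$ via the groupoid morphism induced by $\kappa$ (an isomorphism by uniqueness of fundamental groupoids). A comparison on the generating set $j(\DD)$ — using the naturality identity $p_*\circ j=i\circ p$ and the identity $p(\delta)\cdot s(\delta)=r(\delta)$ that comes from uniqueness of lifts — identifies this isomorphism with $(p_*,s)$, which gives the isomorphism $(p_*,s)\:\HH\variso\GG*\DD^0$. The equivalence with the assertion that $p_*\:\HH\to\GG$ is a covering is then the standard dictionary between coverings of a groupoid and its transformation‑groupoid actions: the source‑fibre of $\HH$ over $v$ maps bijectively onto the source‑fibre of $\GG$ over $p(v)$ exactly when $\HH\cong\GG*\HH^0$, and the range‑fibre condition follows by applying inverses.

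I expect the step requiring the most care to be the construction of $\phi'$ in the key lemma. Since $\GG*V$ carries no evident freeness, one cannot simply define $\phi'$ by induction on word length in $\GG$ without checking compatibility with the defining relations, and the cleanest device for circumventing this is the auxiliary groupoid $\LL$, which repackages the bookkeeping so that the universal property of $\GG$ can be invoked directly. Everything else is formal manipulation of universal properties and, as the authors indicate, amounts to a routine reformulation of \cite[Theorem~3.7]{pqr:cover}.
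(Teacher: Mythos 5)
Your argument is correct, but it follows a genuinely different route from the one in the paper. The paper's proof is a deferral to the proof of \cite[Theorem~3.7]{pqr:cover}: it builds presentations of the fundamental groupoids $\HH$ and $\GG$ from the underlying graphs of $\DD$ and $\CC$ (free path categories modulo the relation sets $K_\DD$, $K_\CC$ coming from composition) and then compares the two presentations by chasing a large commuting diagram; the only new content is the dictionary translating ``skeletons'' and ``commuting squares'' of $k$-graphs into underlying graphs and composition relations of small categories. You instead argue intrinsically: you first observe that unique lifting identifies $\DD$ with the transformation category $\CC*\DD^0$ (with the action $\alpha\cdot v=r(\wilde\alpha_v)$, whose bijectivity on fibres follows from the bijectivity of $w\DD\to p(w)\CC$), and then you prove as your key lemma that the fundamental-groupoid construction commutes with forming transformation categories, $\GG(\CC*V)\simeq\GG(\CC)*V$, entirely via universal properties --- with the auxiliary groupoid $\LL$ serving exactly its intended purpose of packaging the data of a functor on $\CC*V$ so that the universal property of $\GG(\CC)$ can be invoked once, rather than defining $\phi'$ by hand on words. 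The identification of the resulting canonical isomorphism with $(p_*,s)$ on the generators $j(\DD)$, using $p_*\circ j=i\circ p$ and $p(\delta)\cdot s(\delta)=r(\delta)$, is sound, as is the final passage between ``$(p_*,s)$ is an isomorphism'' and ``$p_*$ is a covering.'' What your approach buys is a self-contained, presentation-free proof that does not import the graph/relation machinery of \cite{pqr:cover} and that isolates a reusable statement (compatibility of $\GG(-)$ with $-*V$, essentially a categorical form of \lemref{actions}); what the paper's approach buys is brevity and explicit generators-and-relations control of $\HH$, at the cost of being only a sketch that leans on the cited proof.
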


\begin{proof}
The proof of the second statement is a routine modification of the proof of \cite[Theorem~3.7]{pqr:cover},
in which
the roles of $\CC$ and $\DD$ are played by $k$-graphs $\Lambda$ and $\Omega$, respectively,
and the modifications are as follows:
\begin{itemize}
\item
the ``skeletons'' $E$ and $F$ of $\Lambda$ and $\Omega$
should now be the underlying graphs of $\CC$ and $\DD$,
and

\item
the sets of ``commuting squares'' $S_\Lambda$ and $S_\Omega$
are to be replaced by the sets of relations $K_\CC$ and $K_\DD$
determined by the composition operations of $\CC$ and $\DD$, respectively,
e.g.,
\[
K_\CC=\{(\alpha\beta,\gamma):\alpha,\beta,\gamma\in \PP(E),
\alpha\beta=\gamma\text{ in }\CC\}.
\]
\end{itemize}
Then the crux of the argument is a careful analysis of the commuting diagram (see the proof of \cite[Theorem~3.7]{pqr:cover} for undefined notation)
\[
\xymatrix{
\PP(F^+) \ar[rrr]^-R \ar[ddd]_{q_*} \ar@/_4pc/[dddd]_{(q_*,s)}^\simeq
&&&\HH \ar[ddd]^{p_*} \ar@/^4pc/[dddd]^{(p_*,s)}
\\
&F^+ \ar[d]_q \ar@{_(->}[ul]
&F \ar[d]^p \ar@{_(->}[l] \ar[ur]^j
\\
&E^+ \ar@{_(->}[dl]
&E \ar@{_(->}[l] \ar[dr]^i
\\
\PP(E^+) \ar[rrr]_-Q
&&&\GG
\\
\PP(E^+)*\DD^0 \ar[rrr]_{Q*\id} \ar[u]_{\pi_1}
&&&\GG*\DD^0 \ar[u]^{\pi_1}
}
\]

To see that the second statement implies the first, note that the isomorphism $\HH\simeq \GG*\DD^0$ takes the morphism $p_*$ to the
coordinate projection $p_\GG$.
It follows that $p_*$ maps $\HH v$ injectively onto $\GG x$,
and so is a groupoid covering.
Conversely, the first statement implies the second
because if we take $\HH=\GG*\DD^0$
then $p_*$ is the coordinate projection $p_\GG$ given by
\[
p_\GG(a,v)=a\righttext{for}(a,v)\in \GG*\DD^0.
\qedhere
\]
\end{proof}

A
groupoid action $\GG\act V$ on a set $V$ is called \emph{transitive}
if $\GG v=V$ for some --- and hence every ---  $v\in V$.

\begin{cor}
Let $p\:\DD\to\CC$ be a covering,
and let $(\HH,j)$ and $(\GG,i)$ be the fundamental groupoids of $\DD$ and  $\CC$, respectively.
Then $\DD$ 
is connected if and only if the corresponding groupoid action $\GG\act \DD^0$ is transitive.
\end{cor}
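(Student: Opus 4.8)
The plan is to reduce everything to \propref{gpd cover prop}. First I would invoke the general fact, already recorded earlier in the paper for $\CC$, that a small category is connected if and only if its fundamental groupoid is connected; applying this to $\DD$ shows that $\DD$ is connected if and only if $\HH$ is connected. By \propref{gpd cover prop} the map $(p_*,s)\:\HH\to\GG*\DD^0$ is a groupoid isomorphism, so $\DD$ is connected if and only if the transformation groupoid $\GG*\DD^0$ is connected.

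The remaining step is to identify connectedness of a transformation groupoid $\GG*V$ with transitivity of the action $\GG\act V$. Using the description in \propref{tfm cat}, the objects of $\GG*V$ are the pairs $(x,v)$ with $x\in\GG^0$ and $v\in V_x$, so the object set is in bijection with $V$, and a morphism from $(x,v)$ to $(y,w)$ is an element $(a,v)$ with $a\in y\GG x$ and $av=w$. Hence there is a morphism between the objects corresponding to $v$ and $w$ precisely when $w\in\GG v$, so $\GG*V$ is connected if and only if $\GG v=V$ for some --- equivalently every --- $v\in V$, i.e.\ if and only if $\GG\act V$ is transitive. Taking $V=\DD^0$, with the $\GG$-action being the one furnished by \lemref{actions} as the unique action extending the fibrewise $\CC$-action on $\DD^0$ determined by $p$, yields the corollary.

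I do not expect a genuine obstacle here: essentially all the work sits in \propref{gpd cover prop}, and what is left is the routine unwinding of the definitions of $\GG*V$ and of transitivity. The only minor care needed is to confirm that the action named in the statement is indeed the $\GG$-action on $\DD^0$ obtained from the covering via \lemref{actions}, and to make the identification of the object set of $\GG*\DD^0$ with $\DD^0$ explicit so that groupoid connectedness becomes literally the single-orbit condition.
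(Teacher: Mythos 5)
Your argument is correct and is essentially the paper's own proof: both pass from connectedness of $\DD$ to connectedness of $\HH$, use \propref{gpd cover prop} to identify $\HH$ with $\GG*\DD^0$, and then unwind the source and range maps of the transformation groupoid to see that its connectedness is exactly transitivity of $\GG\act\DD^0$. No further comment is needed.
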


\begin{proof}
The category $\DD$ is connected if and only if $\HH$, equivalently $\GG*\DD^0$, is.
For $(a,v)\in \GG*\DD^0$, we have $s(a,v)=(s(a),v)$ and $r(a,v)=(r(a),av)$.
It follows that $\HH$ is connected if and only if for all $u,v\in \DD^0$ there exists $a\in\GG$ such that $u=av$, i.e., if and only if $\GG$ acts transitively on $\DD^0$.
\end{proof}

\begin{cor}[{\cite[Theorem~2.5]{pqr:cover}}]\label{normalizer}
let $p\:\DD\to\CC$ be a connected covering,
$x\in \CC^0$, and $v\in p\inv(x)$.
Then the normalizer $N$ of $p_*(\pi(\CC,v))$ in $\pi(\CC,x)$
acts on the covering $(\DD,p)$ by automorphisms,
and in fact
\[
\aut(\DD,p)\simeq N/p_*\pi(\DD,v).
\]
\end{cor}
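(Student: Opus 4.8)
The plan is to run the classical deck-transformation argument, transported through the dictionary between coverings of $\CC$ and coverings of the fundamental groupoid $\GG=\GG(\CC)$. Write $(\HH,j)$ for the fundamental groupoid of $\DD$. By \propref{gpd cover prop}, $p_*\:\HH\to\GG$ is a groupoid covering and $(p_*,s)\:\HH\variso\GG*\DD^0$ is an isomorphism carrying $p_*$ to the coordinate projection $\GG*\DD^0\to\GG$; thus $(\DD,p)$ corresponds to the action $\GG\act\DD^0$, which is transitive because $\DD$ is connected. Put $x=p(v)$, write $K=\pi(\CC,x)=x\GG x$, set $H=p_*\pi(\DD,v)=p_*(v\HH v)\subset K$, and let $N=N_K(H)$ be its normalizer.

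First I would record three facts phrased in terms of the action $\GG\act\DD^0$. (i) On the covering side, the passage to $\GG$-sets is an equivalence of categories (the extension to morphisms of \lemref{actions} and \propref{gpd cover prop}); hence $\phi\mapsto\phi_*$ identifies $\aut(\DD,p)$ with the group of $\GG$-equivariant bijections of $\DD^0$, the inverse sending an equivariant bijection $\psi$ to the functor that equals $\psi$ on vertices and lifts $\alpha\in w\DD w'$ to the unique element of $\psi(w)\DD$ lying over $p(\alpha)$. (ii) Evaluating $(p_*,s)$ on $v\HH v$ shows $H=\{a\in K:av=v\}$, i.e.\ $H$ is the stabilizer of $v$ in the action. (iii) $K$ acts transitively on the fibre $p\inv(x)$: given $v'\in p\inv(x)$, transitivity of $\GG\act\DD^0$ supplies $a$ with $av=v'$, and then $a\in x\GG x=K$.

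The remaining content is the elementary group theory of a transitive action. For $g\in N$ there is a $\GG$-equivariant automorphism $\psi_g$ of $\DD^0$ determined by $\psi_g(v)=g\inv v$, consistent on the other fibres precisely because $g$ normalizes $H$; through (i) this is an automorphism of $(\DD,p)$, and $g\mapsto\psi_g$ is the asserted action of $N$ on the covering. It is a homomorphism $N\to\aut(\DD,p)$ because $\psi_g\circ\psi_h$ and $\psi_{gh}$ agree at $v$ while an equivariant automorphism is determined by its value at $v$; its kernel is $\{g\in N:g\inv v=v\}=H$; and it is surjective because an equivariant automorphism $\psi$ carries $v$ to some $v'=g\inv v$ in the fibre with $g\in K$, and bijectivity of $\psi$ forces the stabilizer of $v'$ to equal that of $v$, i.e.\ $g\inv Hg=H$, so $g\in N$ and $\psi=\psi_g$. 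Therefore $\aut(\DD,p)\simeq N/H=N/p_*\pi(\DD,v)$.

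The single non-formal ingredient, and the main obstacle, is the surjectivity in (i): that every $\GG$-equivariant automorphism of $\DD^0$ is induced by an automorphism of the covering $\DD$ — equivalently, that coverings of $\CC$ and actions of $\GG$ form equivalent categories (on morphisms as well as objects). This is where all the genuine covering-theoretic content lies; as with the other results of \secref{sec:cover}, I would handle it by copying the proof of \cite[Theorem~2.5]{pqr:cover} (which rests on \cite[Theorem~3.7]{pqr:cover}, already invoked as \propref{gpd cover prop}) and deleting the degree functors. Once that is granted, the three translations and the normalizer computation above are routine.
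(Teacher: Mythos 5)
Your proposal is correct and follows exactly the route the paper intends: the paper omits the proof of Corollary~\ref{normalizer} (citing \cite[Theorem~2.5]{pqr:cover}), and the surrounding results --- \propref{gpd cover prop}, the corollary identifying $p_*\pi(\DD,v)$ with the stability group $S_v$, and \propref{action normalizer} on automorphisms of transitive groupoid actions --- are precisely the ingredients you assemble, with the one genuinely covering-theoretic step (that every $\GG$-equivariant bijection of $\DD^0$ lifts to an automorphism of $(\DD,p)$) correctly isolated and deferred to the degree-functor-free version of \cite{pqr:cover}. You also rightly read the paper's ``$p_*(\pi(\CC,v))$'' as the typo it is, taking the normalizer of $p_*(\pi(\DD,v))$ in $\pi(\CC,x)$.
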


\begin{cor}[{\cite[Corollary 3.9]{pqr:cover}}]
Let $p\:\DD\to\CC$ be a covering,
and let $x\in \CC^0$ and $v\in p\inv(x)$.
Then $p_*$ maps the fundamental group $\pi(\DD,v)$ isomorphically onto the stability group
$S_v$ of the corresponding groupoid action $\GG(\CC)\act \DD^0$.
\end{cor}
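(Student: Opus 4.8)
The plan is to read the result off directly from \propref{gpd cover prop}. Write $(\HH,j)$ and $(\GG,i)$ for the fundamental groupoids of $\DD$ and $\CC$, so that $\pi(\DD,v)=v\HH v$, let $\GG\act\DD^0$ denote the corresponding groupoid action, and recall that $x=p(v)$. By \propref{gpd cover prop} the map
\[
(p_*,s)\:\HH\to\GG*\DD^0
\]
is a groupoid isomorphism, and, as observed at the end of the proof of that proposition, it carries $p_*\:\HH\to\GG$ to the coordinate projection $p_\GG\:\GG*\DD^0\to\GG$, $p_\GG(a,w)=a$; that is, $p_*=p_\GG\circ(p_*,s)$.

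Next I would track the relevant isotropy groups under this isomorphism. On vertices, $(p_*,s)$ sends $v\in\HH^0=\DD^0$ to $(p(v),v)=(x,v)\in(\GG*\DD^0)^0$, so, being a groupoid isomorphism, it restricts to a group isomorphism from $v\HH v=\pi(\DD,v)$ onto the isotropy group of $\GG*\DD^0$ at $(x,v)$. Using the source and range formulas $s(a,w)=(s(a),w)$ and $r(a,w)=(r(a),aw)$ in the transformation category (\propref{tfm cat}), one sees that $(a,w)$ belongs to that isotropy group precisely when $w=v$, $s(a)=r(a)=x$, and $av=v$; thus the isotropy group of $\GG*\DD^0$ at $(x,v)$ is exactly $\{(a,v):a\in S_v\}$, where $S_v=\{a\in x\GG x:av=v\}$ is the stability group of the action $\GG\act\DD^0$ at $v$, and $p_\GG$ restricts on it to the bijection $(a,v)\mapsto a$ onto $S_v$, which is evidently a group homomorphism and hence a group isomorphism.

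Finally I would compose the two maps. Since $p_*=p_\GG\circ(p_*,s)$, the restriction of $p_*$ to $\pi(\DD,v)$ equals the isomorphism $(p_*,s)\:\pi(\DD,v)\variso\{(a,v):a\in S_v\}$ followed by the group isomorphism $(a,v)\mapsto a$ onto $S_v$; hence $p_*$ maps $\pi(\DD,v)$ isomorphically onto $S_v$, as claimed. I do not expect any real obstacle: all of the substance is already packaged in \propref{gpd cover prop}, and the only thing left to do by hand is the elementary identification of the isotropy group of the transformation groupoid $\GG*\DD^0$ at the vertex $(x,v)$, which falls straight out of the source and range formulas.
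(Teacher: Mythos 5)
Your argument is correct and rests on the same key input as the paper's proof, namely \propref{gpd cover prop}: the paper invokes the first formulation (that $p_*\:\HH\to\GG$ is a groupoid covering, hence restricts to a bijection $\HH v\to\GG x$) and then notes that for $c\in\HH v$ with $a=p_*(c)$ one has $av=r(c)$, so $c\in\pi(\DD,v)$ iff $a\in S_v$; you instead use the equivalent second formulation via the isomorphism $(p_*,s)\:\HH\variso\GG*\DD^0$ and compute the isotropy group of the transformation groupoid at $(x,v)$. The two computations are the same in substance, and your identification of that isotropy group as $\{(a,v):a\in S_v\}$ is exactly the paper's equivalence $c\in\pi(\DD,v)\iff p_*(c)\in S_v$ in disguise.
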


\begin{proof}
Since $p_*$ is a covering, it maps $\pi(\DD,v)$
isomorphically onto \emph{some} subgroup of $\pi(\GG,x)$.
For $c\in\HH v$ and $a=p_*(c)$
we have
$av=r(c)$,
so $c\in\pi(\DD,v)$ if and only if $a\in S_v$. The result follows.
\end{proof}

\subsection*{Classification of transitive groupoid actions}

For the ease of the reader, we collect here some results from \cite{pqr:cover}.

\begin{prop}[{\cite[Proposition~4.1]{pqr:cover}}]
Let $\GG\act V$ be a transitive groupoid action on a set $V$ and $x\in \GG^0$.
Then the family $\{S_v:v\in V_x\}$ is a conjugacy class of subgroups of $\GG_x$.
\end{prop}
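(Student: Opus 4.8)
The plan is to show that for any two vertices $v, w \in V_x$ the stability groups $S_v$ and $S_w$ are conjugate in $\GG_x$, and conversely that every conjugate of $S_v$ inside $\GG_x$ arises as $S_w$ for some $w \in V_x$. Recall that $S_v = \{a \in \GG_x : av = v\}$ is a subgroup of $\GG_x$ (it is the intersection of $\GG_x$ with the stabilizer of $v$ under the action).

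First I would fix $v \in V_x$ and take an arbitrary $w \in V_x$. By transitivity of the action, there exists $a \in \GG$ with $w = av$; since $v, w \in V_x$ and the action sends $V_{s(a)}$ to $V_{r(a)}$, we must have $s(a) = r(a) = x$, so in fact $a \in \GG_x$. Now I would check the routine identity $S_w = a S_v a^{-1}$: if $b \in S_v$ then $(aba^{-1})w = aba^{-1}av = abv = av = w$, so $aba^{-1} \in S_w$; and the reverse inclusion follows symmetrically using $a^{-1}$, which satisfies $a^{-1}w = v$. Hence $S_w$ is conjugate to $S_v$ in $\GG_x$, which shows the family $\{S_v : v \in V_x\}$ is contained in a single conjugacy class.

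Conversely, I would fix $v \in V_x$ and let $a \in \GG_x$ be arbitrary; set $w = a^{-1}v \in V_x$. The same computation as above (with the roles adjusted) gives $S_w = a^{-1} S_v a$, i.e. $a S_w a^{-1} = S_v$, so every subgroup conjugate to $S_v$ in $\GG_x$ is of the form $S_w$ for some $w \in V_x$. Combining the two directions, $\{S_v : v \in V_x\}$ is exactly the conjugacy class of $S_v$ in $\GG_x$.

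I do not anticipate a serious obstacle here: the only point requiring a little care is the observation that the element $a$ implementing transitivity between two points of $V_x$ automatically lies in the isotropy group $\GG_x$ rather than merely in $\GG$, which follows from the disjoint-union structure of the action (axiom (i) in the definition of a category/groupoid action) together with the fact that $a$ maps $V_{s(a)}$ bijectively to $V_{r(a)}$. Everything else is a direct conjugation computation, entirely parallel to the classical classification of transitive group actions via point stabilizers.
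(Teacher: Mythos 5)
Your proof is correct. The paper itself gives no proof of this proposition --- it is one of the results quoted verbatim from \cite[Proposition~4.1]{pqr:cover} with the proofs explicitly skipped --- and your argument (noting that transitivity plus the disjoint-union axiom forces the implementing element $a$ with $av=w$ to lie in $\GG_x$, then running the standard stabilizer-conjugation computation $S_{av}=aS_va\inv$ in both directions) is exactly the expected one.
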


\begin{prop}[{\cite[Proposition~4.2]{pqr:cover}}]
Let a groupoid $\GG$ act transitively on both $V$ and $U$,
and let $x\in \GG^0$, $v\in V_x$, and $u\in U_x$.
Then there is a morphism
$(\GG\act V)\to (\GG\act U)$ taking $v$ to $u$
if and only if $S_v\subset S_u$.
\end{prop}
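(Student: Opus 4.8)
The plan is to prove the two implications separately; this is the groupoid analogue of the classical correspondence between transitive $G$-sets and subgroups, where morphisms of transitive $G$-sets correspond exactly to inclusions of point stabilizers.

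For the forward direction I would argue directly from equivariance. Suppose $\phi\:(\GG\act V)\to(\GG\act U)$ is a morphism of groupoid actions with $\phi(v)=u$. If $a\in S_v$, then $a\in\GG_x$ and $av=v$, so $au=a\phi(v)=\phi(av)=\phi(v)=u$, whence $a\in S_u$; thus $S_v\subset S_u$.

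For the converse I would build $\phi$ by hand using transitivity on $V$. Given $w\in V$, there is (by transitivity) some $a\in\GG$ with $s(a)=x$ and $av=w$; set $\phi(w)=au\in U$. First I would check this is well defined: if $av=bv$ with $s(a)=s(b)=x$, then $r(a)=r(b)$, so $b\inv a\in\GG_x$ and $(b\inv a)v=v$, giving $b\inv a\in S_v\subset S_u$ and hence $au=bu$. Then $\phi$ maps $V_y$ into $U_y$ since $r(au)=r(a)$; it is equivariant because for $c\in\GG$ with $s(c)=r(w)$ one has $cw=(ca)v$, so $\phi(cw)=(ca)u=c\phi(w)$; and $\phi(v)=u$ on taking $a$ to be the identity morphism at $x$. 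This produces the desired morphism.

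The argument is almost entirely formal manipulation with the groupoid-action axioms (Proposition~\ref{tfm cat} and the surrounding discussion). The only step requiring genuine care is the well-definedness of $\phi$ in the converse, and that is precisely the point where the hypothesis $S_v\subset S_u$, together with transitivity of the action on $V$, is used; so I expect that to be the ``main obstacle,'' though it is a mild one.
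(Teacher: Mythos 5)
Your proof is correct, and it is the standard orbit--stabilizer argument for transitive groupoid actions; the paper itself states this result without proof (quoting \cite[Proposition~4.2]{pqr:cover}), and the argument there is essentially the one you give. Both directions check out, including the one delicate point you flag --- well-definedness of $\phi(w)=au$ via $b\inv a\in S_v\subset S_u$, where disjointness of the fibres $V_y$ forces $r(a)=r(b)$ so that $b\inv a\in\GG_x$.
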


\begin{prop}[{\cite[Proposition~4.3]{pqr:cover}}]\label{action normalizer}
Let $\GG\act V$ be a transitive groupoid action, and let $x\in \GG^0$ and $v\in V_x$.
Then the normalizer $N(S_v)$ of $S_v$ in $\GG_x$
acts on the right of the action $\GG\act V$ by automorphisms, and
this gives rise to an isomorphism
\[
\aut(\GG\act V)\simeq N(S_v)/S_v.
\]
\end{prop}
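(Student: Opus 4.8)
The plan is to adapt the classical fact that $\aut_G(G/H)\simeq N_G(H)/H$ for a transitive $G$-set to the groupoid setting. Fix $x\in\GG^0$ and $v\in V_x$, write $N=N(S_v)\subseteq\GG_x$, and record two elementary facts that get used repeatedly: (a) any morphism of $\GG$-actions sends $V_y$ into $V_y$, obtained by applying equivariance to the identity arrow $1_y$; in particular automorphisms of $\GG\act V$ preserve the grading $V=\bigsqcup_y V_y$; and (b) transitivity, spelled out, says $\GG_x v=V_x$ and $\GG v=V$, with $av=a'v$ (for $a,a'\in\GG$ of source $x$) if and only if $a^{-1}a'\in S_v$.

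First I would build the right action. For $n\in N$ define $\Phi_n\:V\to V$ by $\Phi_n(av)=anv$, where $w=av$ is any expression of $w\in V$ with $s(a)=x$. The essential point is well-definedness: if $av=a'v$ then $a^{-1}a'\in S_v$, and since $n$ normalizes $S_v$ we get $n^{-1}(a^{-1}a')n\in S_v$, i.e.\ $anv=a'nv$. Short computations (using (a) and (b) to see all the products are composable) then show $\Phi_n$ is $\GG$-equivariant, preserves the grading, satisfies $\Phi_e=\id$ and $\Phi_n\circ\Phi_m=\Phi_{mn}$, and is invertible with inverse $\Phi_{n^{-1}}$; hence $w\cdot n:=\Phi_n(w)$ is a right action of $N$ on $V$ by automorphisms of $\GG\act V$. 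Evaluating $\Phi_n$ at $v$ shows $\Phi_n=\id$ iff $nv=v$ iff $n\in S_v$, so $n\mapsto\Phi_n$ descends to an injection $N/S_v\hookrightarrow\aut(\GG\act V)$.

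The main work is surjectivity. Given $\Psi\in\aut(\GG\act V)$, fact (a) gives $\Psi(v)\in V_x$, so by transitivity $\Psi(v)=nv$ for some $n\in\GG_x$, and equivariance forces $\Psi(av)=a\Psi(v)=anv$ for every expression $w=av$. Thus the formula $av\mapsto anv$ is well-defined (it computes the honest map $\Psi$), which is precisely the statement $n^{-1}S_v n\subseteq S_v$; running the same argument on $\Psi^{-1}$ (which satisfies $\Psi^{-1}(v)=n^{-1}v$) gives $nS_vn^{-1}\subseteq S_v$, so $n\in N(S_v)$ and $\Psi=\Phi_n$. Combined with the previous paragraph, $n\mapsto\Phi_n$ induces an isomorphism $N(S_v)/S_v\variso\aut(\GG\act V)$; strictly it is an anti-isomorphism, but composing with inversion on $N(S_v)/S_v$ yields the isomorphism asserted.

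I expect the only genuine obstacle to be the bookkeeping imposed by the partially-defined multiplication: one must keep verifying that the products $av$, $anv$, $a^{-1}a'$, $nv$ that appear are actually composable, which is exactly where facts (a) and (b) are invoked. Conceptually nothing goes beyond the group-theoretic argument, so apart from this care the proof is a direct transcription of the group case.
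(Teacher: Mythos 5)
Your proof is correct, and it is the standard orbit--stabilizer argument that the cited source \cite[Proposition~4.3]{pqr:cover} uses; the paper itself gives no proof here, explicitly deferring to that reference. All the key points are in place: well-definedness of $\Phi_n$ via normality, the identification of the kernel with $S_v$, the observation that $\Psi^{-1}(v)=n^{-1}v$ (needed to get both inclusions $n^{-1}S_vn\subset S_v$ and $nS_vn^{-1}\subset S_v$), and the remark that $n\mapsto\Phi_n$ is an anti-homomorphism fixed by composing with inversion.
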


If $\eta\:\GG\to G$ is a cocycle into a group $G$,
the associated \emph{cocycle action} $\GG\act (\GG^0\times G)$ is given by
\[
a\bigl(s(a),g\bigr)=\bigl(r(a),\eta(a)g\bigr).
\]
We write $\GG^0\times_\eta G$ to indicate the set $\GG^0\times G$ equipped with the cocycle action.

\begin{prop}[{\cite[Proposition~4.5]{pqr:cover}}]
Let $\GG$ be a connected groupoid and $x\in \GG^0$.
There is a cocycle $\eta\:\GG\to \GG_x$ such that the associated
action $\GG\act (\GG^0\times_\eta \GG_x)$
is free and transitive.
\end{prop}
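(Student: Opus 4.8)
The plan is to produce $\eta$ directly from a maximal tree and then to read off both properties from short computations, mirroring the cocycle already used for the fundamental group earlier in the paper.

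First I would fix a maximal tree $T=\{t_y\}_{y\in\GG^0}$ in $\GG$ rooted at $x$, in the sense of \defnref{max tree}; this is precisely where connectedness of $\GG$ enters. I would then define $\eta\:\GG\to\GG_x$ by $\eta(a)=t_{r(a)}\inv a\,t_{s(a)}$. For $a\in z\GG y$ this element lies in $x\GG x=\GG_x$, so $\eta$ really takes values in the isotropy group, and the cocycle identity $\eta(ab)=\eta(a)\eta(b)$ follows from the telescoping $t_z\inv abt_u=(t_z\inv at_y)(t_y\inv bt_u)$ for composable $a\in z\GG y$, $b\in y\GG u$ --- exactly the computation carried out for the fundamental-group cocycle above.

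Next I would check transitivity of the associated cocycle action $\GG\act(\GG^0\times_\eta\GG_x)$, where $a(s(a),g)=(r(a),\eta(a)g)$. Given points $(y,g)$ and $(z,h)$ of $\GG^0\times\GG_x$, the element $a=t_z\,(hg\inv)\,t_y\inv$ lies in $z\GG y$ and satisfies $\eta(a)=hg\inv$, hence $a(y,g)=(z,h)$; thus every orbit is all of $\GG^0\times\GG_x$. For freeness I would compute the stability group $S_{(y,g)}$: an element $a\in\GG_y$ fixes $(y,g)$ iff $\eta(a)=e$, and since the restriction of $\eta$ to $y\GG y$ is the conjugation $a\mapsto t_y\inv at_y$, this forces $a=t_yt_y\inv$, the identity morphism at $y$. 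Hence all stability groups are trivial, which is the required freeness.

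I do not anticipate a genuine obstacle here: once the maximal tree has been chosen, the entire argument is bookkeeping. The only points deserving a little care are confirming that $\eta$ lands in $\GG_x$ rather than merely in $\GG$, and keeping the left/right conventions of the cocycle action consistent throughout; the choice of a $t_y$ at each vertex is harmless.
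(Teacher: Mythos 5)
Your proof is correct. The paper itself gives no argument here (this subsection merely collects results from \cite{pqr:cover} with proofs omitted), but your construction is exactly the one the paper uses elsewhere --- the maximal-tree cocycle $\eta(a)=t_{r(a)}\inv a\,t_{s(a)}$ of \defnref{max tree} and the retraction lemma --- and your verifications of the cocycle identity, transitivity via $a=t_z(hg\inv)t_y\inv$, and triviality of the stabilizers are all accurate.
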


Note that if a group $G$ acts on the right of a groupoid action $\GG\act V$ by automorphisms,
then $\GG$ acts on the orbit space $V/G$ by
\[
\alpha\cdot (vG)=(\alpha\cdot v)G.
\]

\begin{prop}[{\cite[Proposition~4.6]{pqr:cover}}]
Let $\GG\act V$ be a free and transitive groupoid action,
$x\in \GG^0$, $v\in V_x$,
and $H$ a subgroup of $\GG_x$.
Let $H\curvearrowright (\GG\act V)$ as in \propref{action normalizer}.
Then the associated action $\GG\act V/H$ is transitive, and $H=S_{vH}$.
\end{prop}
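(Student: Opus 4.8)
The plan is to work directly with the orbit space $V/H$, leaning on freeness at every step rather than passing to a concrete model. First I would record what freeness buys us: since $\GG\act V$ is free, all stability groups are trivial, so in particular $S_v=\{x\}$ and its normalizer in $\GG_x$ is all of $\GG_x$ --- which is exactly why the whole subgroup $H\le\GG_x$ acts on the right of $\GG\act V$ by automorphisms via \propref{action normalizer}. I would also note the injectivity that freeness yields: for $a,b\in\GG$ with $s(a)=s(b)=x$, if $av=bv$ then $a=b$ (multiply on the left by $b\inv$ and use $S_v=\{x\}$). From the description of the right action in \propref{action normalizer}, the automorphism attached to $h\in H$ sends $v$ to $hv$, so the $H$-orbit of $v$ is $Hv=\{hv:h\in H\}$, using that $H$ is closed under inversion. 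Write $vH$ for this orbit, an element of $V/H$ lying over the vertex $x$, and recall from the remark preceding this proposition that $\GG$ acts on $V/H$ by $a\cdot(uH)=(au)H$, which is well defined because the $H$- and $\GG$-actions commute.

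For transitivity of $\GG\act V/H$: given any $uH\in V/H$, say $u\in V_y$, transitivity of $\GG\act V$ gives $c\in y\GG x$ with $u=cv$; since $s(c)=x$ we get $c\cdot(vH)=(cv)H=uH$, so $\GG\cdot(vH)=V/H$. For the stability group: for $a\in\GG_x$ we have $a\cdot(vH)=(av)H$, and $(av)H=vH$ iff $av\in Hv$, i.e.\ $av=hv$ for some $h\in H$; since $a,h\in\GG_x$, the injectivity statement forces $a=h$. Hence $a$ fixes $vH$ precisely when $a\in H$, i.e.\ $S_{vH}=H$.

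I expect the only delicate point to be the bookkeeping around \propref{action normalizer}: one must confirm that triviality of $S_v$ genuinely makes the relevant normalizer all of $\GG_x$, and that the induced right action of an element $h\in H$ carries $v$ into the coset $Hv$, so that the orbit $vH$ is exactly $Hv$ (the possible replacement of $h$ by $h\inv$ in that convention is harmless here because $H$ is a subgroup). Granting this, both the transitivity of $\GG\act V/H$ and the identification $S_{vH}=H$ fall out immediately from freeness together with transitivity of the original action.
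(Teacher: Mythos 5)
Your argument is correct: freeness gives $S_v=\{x\}$, hence $N(S_v)=\GG_x$ so all of $H$ acts on the right, the orbit of $v$ is $Hv$, transitivity of $\GG\act V/H$ follows from transitivity of $\GG\act V$, and the identification $S_{vH}=H$ follows from the cancellation $av=hv\Rightarrow a=h$. The paper itself omits the proof and defers to \cite[Proposition~4.6]{pqr:cover}, and what you have written is precisely the standard argument being delegated there, including the correct handling of the one genuinely delicate point (that the right $H$-orbit of $v$ really is $Hv$, with the $h$ versus $h\inv$ convention immaterial since $H$ is a subgroup).
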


\subsection*{Universal coverings of small categories}

\begin{defn}
A covering $q\:\EE\to\CC$ of small categories is \emph{universal}
if it is connected and for every connected covering $p\:\DD\to\CC$ there is a
unique
morphism $\phi\:(\EE,q)\to (\DD,p)$.
\end{defn}

\begin{thm}[{\cite[Theorem~2.7]{pqr:cover}}]
Every connected small category $\CC$ has a universal covering,
any two universal coverings are uniquely
isomorphic,
and a connected covering $q\:\EE\to\CC$ is universal if and only if
$\pi(\EE,v)=\{v\}$ for some \(and hence every\) $v\in \EE^0$.
\end{thm}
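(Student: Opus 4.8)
The plan is to imitate the classical construction of a universal covering space, transported to small categories through fundamental groupoids, and then to read off the characterization from the dictionary between coverings and groupoid actions developed above. First I would construct a universal covering. Fix $x\in\CC^0$ and let $\GG=\GG(\CC)$ be the fundamental groupoid, so that $\CC^0=\GG^0$, and put $G=\pi(\CC,x)=\GG_x$. By \cite[Proposition~4.5]{pqr:cover} there is a cocycle $\eta\:\GG\to G$ whose cocycle action $\GG\act(\GG^0\times_\eta G)$ is free and transitive; pulling it back along $i_\CC\:\CC\to\GG$ as in \lemref{actions} gives an action $\CC\act V$ with $V=\GG^0\times_\eta G$. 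Set $\EE=\CC*V$ as in \propref{tfm cat}, with $q\:\EE\to\CC$ the coordinate projection $q(\alpha,v)=\alpha$. The covering axioms for $q$ are immediate: the fibre of $q$ over a vertex is the corresponding $V_x$, and bijectivity of $v\mapsto\alpha v$ for $\alpha\in y\CC x$ translates directly into bijectivity of $\EE w\to\CC q(w)$ and $w\EE\to q(w)\CC$. (Concretely $\EE$ is the skew product $\CC\times_\eta G$ with its canonical projection, as anticipated at the end of Section~5.)

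Next I would verify that this $\EE$ is a connected covering with trivial fundamental group, and that the "some vertex/every vertex" clause holds in general. By \propref{gpd cover prop} the fundamental groupoid $\HH$ of $\EE$ is identified with $\GG*\EE^0=\GG*V$ carrying the restricted $\GG$-action; transitivity of $\GG\act V$ then yields connectedness of $\EE$ via the corollary characterizing connected coverings by transitivity of the associated groupoid action, while freeness of $\GG\act V$, together with \cite[Corollary~3.9]{pqr:cover} (which identifies $\pi(\EE,v)$ with the stability group $S_v$), forces $\pi(\EE,v)=\{v\}$. Since the isotropy groups of a connected groupoid at different vertices are mutually isomorphic, vanishing of $\pi(\EE,v)$ at one vertex is equivalent to its vanishing at every vertex.

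The substantive point is the characterization, where the key lemma is that every covering $p\:\DD\to\CC$ is isomorphic over $\CC$ to $\CC*\DD^0$ — send $(\alpha,v)$ to the unique lift of $\alpha$ with source $v$ — under which a morphism of coverings $(\EE,q)\to(\DD,p)$ is exactly a $\CC$-equivariant, hence (by \lemref{actions}) $\GG$-equivariant, map $\EE^0\to\DD^0$. Now let $q\:\EE\to\CC$ be a connected covering with $\pi(\EE,v)=\{v\}$: then $\GG\act\EE^0$ is transitive (connectedness) and free (triviality of the stability groups, by \cite[Corollary~3.9]{pqr:cover}), and for any connected covering $p\:\DD\to\CC$ the action $\GG\act\DD^0$ is transitive. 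Since $S_v=\{q(v)\}$ is contained in every stability group of $\GG\act\DD^0$, \cite[Proposition~4.2]{pqr:cover} supplies an equivariant map $\EE^0\to\DD^0$ carrying a chosen $v$ to a chosen point of $p\inv(q(v))$, and this map is determined by its value at that single vertex because the source action is transitive; this is the (unique, once a basepoint is specified as in the formulation of universality) covering morphism $\EE\to\DD$, so $q$ is universal. Conversely, if $q$ is universal, the usual diagram chase — form a universal covering as in the first paragraph, compose the two mutual morphisms, and play them off against the identities — shows $q$ is isomorphic to that construction, hence $\pi(\EE,v)=\{v\}$; the same chase gives existence of a universal covering and its uniqueness up to unique isomorphism.

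I expect the main obstacle to be this universal-property step: pinning down the equivalence "morphisms of coverings $\leftrightarrow$ $\GG$-equivariant maps of vertex sets" precisely (in particular the reduction of an arbitrary covering to the form $\CC*\DD^0$), and the bookkeeping around uniqueness of the lift. Everything else — connectedness, the computation of $\pi(\EE,v)$, the "some/every" equivalence, and uniqueness of the universal object — then follows formally from the results already assembled in this section.
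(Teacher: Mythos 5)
The paper gives no proof of this theorem: it is imported verbatim from \cite[Theorem~2.7]{pqr:cover}, under the blanket remark at the start of \secref{sec:cover} that the $k$-graph arguments carry over to small categories once the degree functors are deleted. So there is nothing in the paper to compare you against line by line; judged on its own terms, your reconstruction follows the same strategy as the cited source and as the toolkit this section assembles. Your outline is sound: build $\EE=\CC*(\GG^0\times_\eta G)$ from the free, transitive cocycle action of \cite[Proposition~4.5]{pqr:cover}, get connectedness from transitivity and $\pi(\EE,v)=\{v\}$ from freeness via \cite[Corollary~3.9]{pqr:cover}, and derive the universal property from the classification of transitive actions (\cite[Propositions~4.1--4.3]{pqr:cover}). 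The step you rightly isolate as the crux --- that every covering $p\:\DD\to\CC$ is isomorphic over $\CC$ to $\CC*\DD^0$ via $(\alpha,v)\mapsto(\text{the unique lift of }\alpha\text{ with source }v)$, so that covering morphisms are exactly $\GG(\CC)$-equivariant maps of vertex sets --- is stated in this paper only at the groupoid level (\propref{gpd cover prop}) and would have to be proved at the category level; it is a routine but genuine verification against the covering axioms, and it is where all the lifting bookkeeping lives.

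One caveat you should make explicit rather than leave in a parenthesis. As stated here, a universal covering must admit a \emph{unique} morphism to every connected covering, with no basepoints in sight. Your own analysis shows that when $\pi(\EE,v)=\{v\}$ the equivariant maps $\EE^0\to\DD^0$ are parametrized by the fibre $p\inv(q(v))$: the morphism is unique only after a basepoint in that fibre is fixed (equivalently, unique up to the automorphism group of $(\DD,p)$ described in \corref{normalizer}). Likewise "uniquely isomorphic" cannot hold literally when $\pi(\CC)$ is nontrivial, since the universal covering then has deck transformation group $\pi(\CC,x)\neq\{x\}$. This is a defect of the statement as transcribed (the source's definition asks only for existence of a morphism), not of your argument, but your converse direction --- the diagram chase playing the two mutual morphisms off against the identity --- silently uses the uniqueness clause, so you should say precisely in which (pointed) sense you are using it. With that understanding the proof is complete.
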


\begin{thm}[{\cite[Theorem~2.8]{pqr:cover}}]
Let $q\:\EE\to\CC$ be a universal covering,
$x\in \CC^0$, $v\in q\inv(x)$,
and $H$ a subgroup of $\pi(\CC,x)$.
Let 
$H$ act on the covering $(\EE,q)$
as in \thmref{normalizer}.
Then the associated covering $p\:\EE/H\to \CC$
is connected, and
\[
H=p_*(\pi(\EE/H,vH).
\]
Moreover, every connected covering of $\CC$ is isomorphic to one of the above coverings $\EE/H\to\CC$.
\end{thm}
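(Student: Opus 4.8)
The plan is to reduce the statement, via the correspondence between coverings of $\CC$ and actions of its fundamental groupoid $\GG=\GG(\CC)$, to the classification of transitive $\GG$-actions in \cite[Propositions~4.1--4.6]{pqr:cover} and to \cite[Corollary~3.9]{pqr:cover} relating $p_*$ to stability groups.

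First I would make the correspondence precise. Given a covering $p\:\DD\to\CC$, give $\DD^0$ the $\CC$-action $\alpha\cdot v=r(\wilde\alpha)$, where $\wilde\alpha$ is the unique lift of $\alpha$ in $\DD v$, and pass to a $\GG$-action by \lemref{actions}; then $\lambda\mapsto(p(\lambda),s(\lambda))$ is an isomorphism of coverings $\DD\variso\CC*\DD^0$, where $\CC*\DD^0$ is the transformation category of \propref{tfm cat}. Conversely $(\alpha,v)\mapsto\alpha$ makes $\CC*V\to\CC$ a covering for any $\GG$-set $V$, and $\GG$-set maps induce morphisms of the associated coverings. Under this correspondence connected coverings match transitive actions (the corollary to \propref{gpd cover prop}); the automorphism group of a connected covering $(\DD,p)$ matches $\aut(\GG\act\DD^0)$, compatibly with \corref{normalizer} and \propref{action normalizer}; and, combining \propref{gpd cover prop} with \cite[Corollary~3.9]{pqr:cover}, $p_*$ carries $\pi(\DD,w)$ isomorphically onto the stability group $S_w$ of $\GG\act\DD^0$ for any $w$ over $x$, so $p_*(\pi(\DD,w))=S_w$.

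Next I would specialize to the universal covering $q\:\EE\to\CC$. Since $\pi(\EE,v)=\{v\}$, the action $\GG\act\EE^0$ has trivial stabilizers (by \cite[Corollary~3.9]{pqr:cover}), hence is free, and is transitive since $\EE$ is connected; moreover $\aut(\EE,q)\simeq\pi(\CC,x)$ by \corref{normalizer}, the isomorphism identifying the deck action on $\EE^0$ with the right $\GG_x$-action of \propref{action normalizer}. Restricting to $H\le\pi(\CC,x)$ gives a free action $H\act\EE$ (a deck transformation fixing a vertex is the identity, by transitivity of $\GG\act\EE^0$), so the quotient category $\EE/H$ of \secref{act} is defined, with $(\EE/H)^0=\EE^0/H$, and the quotient functor $\pi_H\:\EE\to\EE/H$ is itself a covering (the maps $v\EE\to[v](\EE/H)$ and $\EE v\to(\EE/H)[v]$ are onto by construction and injective by freeness). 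Thus $q=p\circ\pi_H$ realizes $p\:\EE/H\to\CC$ as a covering, and $\EE/H$ is connected, being the image of the connected category $\EE$ under a surjective functor. Its associated $\GG$-action is the orbit-space action $\GG\act\EE^0/H$, which by \cite[Proposition~4.6]{pqr:cover} is transitive with $S_{vH}=H$; feeding this into the last clause of the dictionary gives $p_*(\pi(\EE/H,vH))=S_{vH}=H$.

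Finally, for an arbitrary connected covering $p'\:\DD\to\CC$, the action $\GG\act\DD^0$ is transitive; choosing $w\in(p')\inv(x)$ and setting $H=S_w\le\pi(\CC,x)$, \cite[Proposition~4.2]{pqr:cover}, applied to $\GG\act\DD^0$ and $\GG\act\EE^0/H$ (whose stability groups at $w$ and at $vH$ both equal $H$), gives morphisms in both directions taking $w\leftrightarrow vH$; since a morphism of transitive actions is determined by one value, the composites are identities, so $\DD^0\simeq\EE^0/H$ as $\GG$-sets, and the correspondence upgrades this to an isomorphism $\DD\simeq\EE/H$ of coverings of $\CC$. The main obstacle is this first step: pinning down the equivalence between coverings of $\CC$ and $\GG$-sets carefully enough — on objects, morphisms, automorphism groups, and with the $\pi_1$/stability-group identification — that all of \cite[Propositions~4.1--4.6]{pqr:cover} transfer. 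Once that bookkeeping is in place the remainder is formal, which is presumably why the surrounding text treats it as a routine reformulation.
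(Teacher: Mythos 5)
Your argument is correct and is essentially the proof the paper is pointing to: the paper itself omits a proof, citing \cite[Theorem~2.8]{pqr:cover} and noting that the $k$-graph arguments carry over, and that source proves the theorem exactly by your route --- translating coverings into actions of the fundamental groupoid via the dictionary of \propref{tfm cat}, \lemref{actions} and \propref{gpd cover prop}, identifying $p_*(\pi(\DD,w))$ with the stability group $S_w$, and then invoking the classification of transitive actions (the analogues of \cite[Propositions~4.1--4.6]{pqr:cover} restated in the paper). The only ingredient you use that the paper leaves implicit is the category-level isomorphism $\DD\simeq\CC*\DD^0$ of coverings (the paper only records the fundamental-groupoid version), and your treatment of that bookkeeping is accurate.
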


\begin{thm}[{\cite[Corollary 5.5]{pqr:cover}}]
If $p\:\DD\to\CC$ is a connected covering,
then the following are equivalent:
\begin{enumerate}
\item $p\:\DD\to\CC$ is universal;
\item the corresponding action $\GG(\CC)\curvearrowright \DD^0$ is free;
\item $\pi(\DD,v)=\{v\}$ for some \(and hence every\) $v\in \DD^0$.
\end{enumerate}
\end{thm}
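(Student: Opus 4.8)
The plan is to deduce all three equivalences by combining the covering machinery with the identification $\HH \simeq \GG*\DD^0$ coming from \propref{gpd cover prop}. Throughout, write $\GG = \GG(\CC)$, let $(\HH,j)$ be the fundamental groupoid of $\DD$, and recall that $p_*\:\HH\to\GG$ is a groupoid covering, equivalently $(p_*,s)\:\HH\to\GG*\DD^0$ is an isomorphism intertwining $p_*$ with the coordinate projection. First I would record, from the corollary immediately following \propref{gpd cover prop}, that for any $v\in\DD^0$ the map $p_*$ carries $\pi(\DD,v)$ isomorphically onto the stability group $S_v$ of the action $\GG\act\DD^0$; this is exactly the content of \cite[Corollary~3.9]{pqr:cover} quoted above. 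That gives $(ii)\Leftrightarrow(iii)$ immediately: the action is free precisely when every stability group $S_v$ is trivial, i.e. when $\pi(\DD,v)=\{v\}$ for every (equivalently, since $\DD$ is connected and the stability groups along a fibre form a conjugacy class by \cite[Proposition~4.1]{pqr:cover}, for some) $v\in\DD^0$.

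For $(iii)\Leftrightarrow(i)$ I would invoke \cite[Theorem~2.7]{pqr:cover} as quoted in the excerpt: a connected covering $q\:\EE\to\CC$ is universal if and only if $\pi(\EE,v)=\{v\}$ for some (and hence every) $v\in\EE^0$. Applying this verbatim with $\EE=\DD$ and $q=p$ gives that $p$ is universal iff $\pi(\DD,v)=\{v\}$, which is condition $(iii)$. So the three statements are equivalent.

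Strictly speaking the only thing to check is that the hypotheses of the cited results are met: $p\:\DD\to\CC$ is assumed to be a \emph{connected} covering, so $\DD$ is connected, which is what \cite[Theorem~2.7]{pqr:cover} and \cite[Corollary~3.9]{pqr:cover} require. The ``for some (and hence every)'' clause in $(iii)$ is justified because in a connected groupoid $\HH$ all isotropy groups $\pi(\DD,v)=v\HH v$ are mutually isomorphic (conjugate via any element of $v\HH w$), so triviality at one vertex forces triviality at all. There is no real obstacle here: the theorem is essentially a bookkeeping statement assembling \cite[Corollary~3.9]{pqr:cover} (for $(ii)\Leftrightarrow(iii)$) and \cite[Theorem~2.7]{pqr:cover} (for $(i)\Leftrightarrow(iii)$), both of which are stated earlier; the one point deserving a sentence of care is the passage between ``some'' and ``every'' vertex, handled by connectedness of $\HH$ as just described.
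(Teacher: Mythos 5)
Your proof is correct, and it matches the intended argument: the paper itself omits a proof of this statement (deferring to \cite[Corollary~5.5]{pqr:cover} and noting that such results are routine reformulations), and the intended derivation is exactly your assembly of the two facts already quoted in that section, namely that $p_*$ carries $\pi(\DD,v)$ isomorphically onto the stabilizer $S_v$ of the action $\GG(\CC)\curvearrowright\DD^0$ (giving (ii)$\Leftrightarrow$(iii)) and the characterization of universal coverings by triviality of $\pi(\DD,v)$ (giving (i)$\Leftrightarrow$(iii)). Your handling of the ``for some (and hence every)'' clause via connectedness of the fundamental groupoid of $\DD$ is also the right justification.
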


\begin{thm}[{\cite[Corollary 6.5]{pqr:cover}}]
Let $\CC$ be a connected small category and let $x\in \CC^0$.
Then there is a cocycle $\eta\:\CC\to \pi(\CC,x)$ such that the associated skew-product covering
\[
\CC\times_\eta \pi(\CC,x)\to \CC
\]
is universal.
\end{thm}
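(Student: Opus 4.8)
The plan is to realize the skew-product covering as the \emph{action covering} attached to a suitable cocycle action of the fundamental groupoid, and then to choose $\eta$ so that this cocycle action is free and transitive. Since $\CC$ is connected, so is its fundamental groupoid $\GG=\GG(\CC)$, and $\pi(\CC,x)=\GG_x$. Applying \cite[Proposition~4.5]{pqr:cover} to $\GG$ at the vertex $x$ produces a cocycle $\eta_0\:\GG\to\GG_x=\pi(\CC,x)$ for which the associated cocycle action $\GG\act(\GG^0\times_{\eta_0}\pi(\CC,x))$ is free and transitive. I would then put $\eta=\eta_0\circ i_\CC\:\CC\to\pi(\CC,x)$, which is the cocycle claimed in the statement; by uniqueness of the factorization of a cocycle on $\CC$ through $i_\CC$, the extension of $\eta$ to $\GG$ is exactly $\eta_0$.

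Next I would identify the skew product. Set $V=\GG^0\times_{\eta_0}\pi(\CC,x)=\CC^0\times\pi(\CC,x)$. By \lemref{actions} the cocycle action $\GG\act V$ restricts to an action $\CC\act V$, given on generators by $\alpha\cdot(s(\alpha),g)=(r(\alpha),\eta(\alpha)g)$, and one forms the category $\CC*V$ of \propref{tfm cat}. Comparing the operations in \defnref{skew def} and \lemref{skew finitely aligned} with those of \propref{tfm cat}, the assignment $(\alpha,g)\mapsto\bigl(\alpha,(s(\alpha),g)\bigr)$ defines an isomorphism $\CC\times_\eta\pi(\CC,x)\variso\CC*V$ commuting with the projections onto $\CC$; moreover a short direct computation shows that $p\:\CC*V\to\CC$, $(\alpha,v)\mapsto\alpha$, is a covering, the fibre bijection $w(\CC*V)\to p(w)\CC$ coming from bijectivity of the action maps $V_{s(\alpha)}\to V_{r(\alpha)}$. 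Hence it suffices to prove that $p\:\CC*V\to\CC$ is a universal covering.

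To do this I would identify $(\CC*V)^0=\CC^0*V$ with $V$ in the evident way and compute the groupoid action $\GG\act(\CC*V)^0$ corresponding to $p$ through \propref{gpd cover prop}. The generator $i_{\CC*V}(\alpha,u)$ of $\GG(\CC*V)$ has source $u$ and range $\alpha u$, and $p_*$ carries it to $i_\CC(\alpha)$; therefore this corresponding action satisfies $i_\CC(\alpha)\cdot u=\alpha u$, and by the uniqueness clause of \lemref{actions} it must coincide with the action $\GG\act V$ we started from --- that is, with the cocycle action of $\eta_0$. Since that action is transitive, the covering $\CC*V\to\CC$ is connected (a covering $q\:\DD\to\CC$ has $\DD$ connected exactly when the associated action $\GG\act\DD^0$ is transitive); since it is also free, \cite[Corollary~5.5]{pqr:cover} shows that this connected covering, and hence $\CC\times_\eta\pi(\CC,x)\to\CC$, is universal.

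The part that needs care is not the logic but the compatibility bookkeeping in the middle step: checking that the extension of $\eta_0\circ i_\CC$ to $\GG$ is again $\eta_0$, that the source and range conventions for the skew product in \lemref{skew finitely aligned} match those for $\CC*V$ in \propref{tfm cat} so that the stated map really is an isomorphism of categories over $\CC$, and --- most delicately --- that the groupoid action determined by the covering $\CC*V\to\CC$ is precisely the original cocycle action and not a twisted variant of it. With these identifications in hand, universality follows at once from \cite[Proposition~4.5]{pqr:cover} and \cite[Corollary~5.5]{pqr:cover}.
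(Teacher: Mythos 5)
The paper states this theorem without proof (the preamble to \secref{sec:cover} defers all such verifications to \cite{pqr:cover}), and your argument is correct and is exactly the intended derivation: take $\eta_0\:\GG(\CC)\to\GG(\CC)_x=\pi(\CC,x)$ from \cite[Proposition~4.5]{pqr:cover} so that the cocycle action is free and transitive, identify $\CC\times_\eta\pi(\CC,x)$ with the action category $\CC*V$ of \propref{tfm cat}, check via \propref{gpd cover prop} and \lemref{actions} that the groupoid action attached to this covering is the original cocycle action, and conclude universality from transitivity (which gives connectedness) together with freeness and \cite[Corollary~5.5]{pqr:cover}. The compatibility points you flag all check out, and the only cosmetic difference from the paper's presentation is that the paper exhibits the cocycle explicitly as $a\mapsto t_{r(a)}\inv\,a\,t_{s(a)}$ for a maximal tree $\{t_w\}$ (in the discussion of the fundamental group and again in the retraction lemma) rather than quoting Proposition~4.5 as a black box.
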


\begin{prop}[{\cite[Proposition~6.6]{pqr:cover}}]
Let $\CC$ be a small category and $\eta\:\CC\to G$ a cocycle.
Then $G$ acts freely on the right of the skew-product covering $\CC\times_\eta G\to \CC$ via
\[
(\alpha,g)h=(\alpha,gh)\righttext{for}\alpha\in\CC,g,h\in G.
\]
\end{prop}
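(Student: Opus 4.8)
The plan is to verify the three assertions bundled into the statement: that $(\alpha,g)h=(\alpha,gh)$ defines a right action of $G$ on the underlying set of $\skpr$, that each group element acts as an automorphism of the covering $p\:\skpr\to\CC$, $p(\alpha,g)=\alpha$, and that this action is free. Everything is a routine check against \defnref{skew def} and \lemref{skew finitely aligned}, so I would present it compactly.

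First I would record that the formula is a right action on sets: $\bigl((\alpha,g)h\bigr)h'=(\alpha,gh)h'=(\alpha,(gh)h')=(\alpha,g(hh'))=(\alpha,g)(hh')$ and $(\alpha,g)e=(\alpha,g)$, using associativity and the identity of $G$. Next, fixing $h\in G$, I would show that $\rho_h\:(\alpha,g)\mapsto(\alpha,gh)$ is an endofunctor of $\skpr$ over $\CC$. It clearly sends vertices $\CC^0\times G$ to vertices and satisfies $p\circ\rho_h=p$, and by \lemref{skew finitely aligned} it respects the source and range maps, since $s\bigl((\alpha,g)h\bigr)=(s(\alpha),gh)=s(\alpha,g)\cdot h$ and $r\bigl((\alpha,g)h\bigr)=(r(\alpha),\eta(\alpha)gh)=r(\alpha,g)\cdot h$. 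The only point that genuinely uses the structure of the skew product is preservation of composability and products: if $(\alpha,g)(\beta,k)$ is defined, i.e.\ $s(\alpha)=r(\beta)$ and $g=\eta(\beta)k$, then multiplying the cocycle identity on the right by $h$ gives $gh=\eta(\beta)(kh)$, so $(\alpha,gh)(\beta,kh)$ is again defined and equals $(\alpha\beta,kh)=\bigl((\alpha,g)(\beta,k)\bigr)h$. Since $\rho_{h\inv}$ is a two-sided inverse of $\rho_h$, each $\rho_h$ is an automorphism of the covering, and by the first step $h\mapsto\rho_h$ is a group homomorphism.

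Finally, freeness falls out immediately: if $(\alpha,g)h=(\alpha,g)$ then $(\alpha,gh)=(\alpha,g)$, whence $gh=g$ in $G$ and hence $h=e$. I do not anticipate a real obstacle; the only step that is not pure bookkeeping is the compatibility of right translation with the partially-defined multiplication in $\skpr$, and this reduces to the trivial remark that the condition $g=\eta(\beta)k$ survives right multiplication by $h$ --- which is, incidentally, precisely why the action must be on the \emph{right}, since left translation would in general disturb this cocycle condition.
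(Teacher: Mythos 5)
Your proof is correct and is exactly the routine verification that the paper has in mind: the paper itself omits the argument, deferring to \cite[Proposition~6.6]{pqr:cover} on the grounds that it is a direct reformulation. All the checks go through against \defnref{skew def} and \lemref{skew finitely aligned} as you describe, and your closing observation --- that the composability condition $g=\eta(\beta)k$ is preserved under right (but not left) translation, which is why the free action lives on the right while \lemref{act skrew} must use $g\cdot(\beta,h)=(\beta,hg\inv)$ to get a left action --- is accurate and worth noting.
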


\subsection*{Connected skew products}
We are interested in when skew products of cocycles on groupoids, or more generally (in fact, in some sense equivalently) on small categories, are connected. We will use the following concept.
\begin{defn}
Let $\GG$ be a groupoid and $\HH\subset\GG$ a subgroupoid.
A \emph{retraction} of $\GG$ on $\HH$ is a morphism $\phi\:\GG\to \HH$ that is the identity map on $\HH$.
\end{defn}

\begin{lem}
Let $\GG$ be a connected groupoid,
$x\in\GG^0$,
and $T=\{t_y\}_{y\in\GG^0}$ a maximal tree rooted at $x$.
Define a cocycle $\eta\:\GG\to\GG_x$ by
\[
\eta(a)=t_z\inv at_y\righttext{if}a\in z\GG y.
\]
Then $\eta$ is a retraction whose kernel contains $T$.
Moreover, every retraction $\GG\to\GG_x$ is of this form for a unique maximal tree rooted at $x$.
\end{lem}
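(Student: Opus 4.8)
The plan is to handle the two assertions in turn, with everything resting on a single observation: if $\phi\:\GG\to\GG_x$ is a retraction, then for each $y\in\GG^0$ the set $y\GG x$ contains a \emph{unique} element lying in $\ker\phi$. For the first assertion I would first record that $\eta$ really is a cocycle into $\GG_x$, by the same computation as in \propref{universal group}: for $a\in z\GG y$ and $b\in y\GG u$ one has $\eta(a)\eta(b)=t_z\inv at_y\,t_y\inv bt_u=t_z\inv ab\,t_u=\eta(ab)$, while $\eta(a)\in x\GG x=\GG_x$ since $t_z\inv\in x\GG z$ and $t_y\in y\GG x$. Using the normalization $t_x=x$ built into \defnref{max tree}, for $a\in\GG_x$ we have $z=y=x$, so $\eta(a)=t_x\inv at_x=a$; hence $\eta$ is a retraction. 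Similarly $\eta(t_y)=t_y\inv t_y\,t_x=x$, the identity of $\GG_x$, so $T\subset\ker\eta$.

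For the converse I would take an arbitrary retraction $\phi\:\GG\to\GG_x$; being a groupoid morphism into a one-object groupoid, it sends every vertex of $\GG$ to $x$. Given $y\in\GG^0$, pick any $b\in y\GG x$ (possible since $\GG$ is connected) and set $t_y:=b\,\phi(b)\inv\in y\GG x$; then $\phi(t_y)=\phi(b)\phi(b)\inv=x$ because $\phi$ fixes $\GG_x$ pointwise. Moreover $t_y$ is the only element of $y\GG x$ in $\ker\phi$: if $t_y,t_y'\in y\GG x\cap\ker\phi$ then $t_y\inv t_y'\in\GG_x\cap\ker\phi=\{x\}$, so $t_y=t_y'$; this is the observation announced above. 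Since $\phi(x)=x$ we get $t_x=x$, so $T:=\{t_y\}_{y\in\GG^0}$ is a maximal tree rooted at $x$. To see that $\phi$ coincides with the associated retraction $\eta$, note that for $a\in z\GG y$ the element $\eta(a)=t_z\inv at_y$ lies in $\GG_x$, so $\phi(\eta(a))=\eta(a)$, while also $\phi(\eta(a))=\phi(t_z)\inv\phi(a)\phi(t_y)=x\,\phi(a)\,x=\phi(a)$ (as $t_z,t_y\in\ker\phi$); hence $\eta=\phi$.

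Uniqueness of the tree then follows at once: if maximal trees $\{t_y\}$ and $\{t_y'\}$ rooted at $x$ both induce the retraction $\phi$, then by the first assertion $t_y,t_y'\in y\GG x\cap\ker\phi$ for every $y$, so $t_y=t_y'$ by the observation. I do not expect any genuine obstacle here; the one thing to keep straight throughout is the range/source bookkeeping (with the convention that $a\in z\GG y$ means $r(a)=z$ and $s(a)=y$), so that every product written above is composable and lands where claimed.
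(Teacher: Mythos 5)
Your proof is correct and follows essentially the same route as the paper's: the same routine verification of the first assertion, the same construction $t_y=b\,\phi(b)\inv$ for the converse, and the same uniqueness argument via the fact that $y\GG x\cap\ker\phi$ is a singleton (which you get directly from $\GG_x\cap\ker\phi=\{x\}$ rather than via bijectivity of $\phi|_{y\GG x}$, an immaterial difference). No gaps.
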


\begin{proof}
The first statement is routine.
For the second, suppose that $\eta\:\GG\to\GG_x$ is a retraction.
For each $y\in\GG^0$,
choose any $a\in y\GG x$,
and define
\[
t_y=a\eta(a)\inv.
\]
Because $\eta$ is a retraction,
$t_y$ is independent of the choice of $a$,
and is in $\ker\eta$.
Moreover, $t_x=x$, so $T=\{t_y\}_{y\in\GG^0}$ is a maximal tree rooted at $x$.
A trivial computation shows that $\eta(a)=t_z\inv at_y$ for $a\in z\GG y$.
It follows quickly from the 
definition of retraction that $\eta$ maps each set $y\GG x$ bijectively onto $\GG_x$,
so the intersection $y\GG x\cap \ker\eta$ contains only $t_y$, proving uniqueness of $T$.
\end{proof}

\begin{rem}
If $\psi\:\GG\to G$ is a cocycle on a groupoid $\GG$, one obvious necessary condition for $\GG\times_\psi G$ to be connected is that $\GG$ be connected,
since the coordinate projection
\[
p_\GG\:\GG\times_\psi G\to \GG
\]
is a covering.
\end{rem}

\begin{defn}
A cocycle $\psi\:\CC\to G$ on a small category $\CC$ is \emph{nondegenerate} if $\psi(\CC)$ generates $G$ as a group.
\end{defn}

\begin{thm}\label{connected equivalent}
Let $\GG$ be a connected groupoid and $\psi\:\GG\to G$ a nondegenerate cocycle.
The following are equivalent:
\begin{enumerate}
\item[(i)]
$\GG\times_\psi G$ is connected;

\item[(ii)]
$\psi(\GG_x)=G$ for every $x\in \GG^0$;

\item[(iii)]
$\psi(\GG_x)=G$ for some $x\in \GG^0$;

\item[(iv)]
there is a maximal tree of $\GG$ contained in $\ker\psi$;

\item[(v)]
for every $x\in\GG^0$ there is a maximal tree of $\GG$ rooted at $x$ contained in $\ker\psi$;

\item[(vi)]
$\psi$ factors through a retraction $\eta\:\GG\to \GG_x$ for every $x\in \GG^0$;

\item[(vii)]
$\psi$ factors through a retraction $\eta\:\GG\to \GG_x$ for some  $x\in \GG^0$.
\end{enumerate}
\end{thm}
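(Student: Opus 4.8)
The plan is to first translate (i) into a condition purely about the cocycle $\psi$, then establish the block (i)$\iff$(ii)$\iff$(iii), and finally hang the ``maximal tree'' conditions (iv),(v) and the ``retraction'' conditions (vi),(vii) off that block. (Throughout I take a connected groupoid to be nonempty, so that the ``for some $x$'' clauses are not vacuous.) For the first step: by \defnref{skew def} applied to $\GG$ (whose skew product $\GG\times_\psi G$ is readily checked to be a groupoid), a morphism from a vertex $(w,h)$ to a vertex $(v,g)$ of $\GG\times_\psi G$ is exactly a pair $(a,h)$ with $a\in v\GG w$ and $\psi(a)=gh\inv$. Since a groupoid is connected iff any two of its vertices are joined by a morphism, and since $gh\inv$ runs over all of $G$ as $g,h$ do, condition (i) is equivalent to: $\psi(v\GG w)=G$ for all $v,w\in\GG^0$. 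From this, (i)$\Rightarrow$(ii) is immediate (take $v=w=x$, so that $v\GG w=\GG_x$), and (ii)$\Rightarrow$(iii) is trivial. For (iii)$\Rightarrow$(i): given $\psi(\GG_x)=G$ and $v,w\in\GG^0$, use connectedness of $\GG$ to choose $a\in v\GG x$ and $b\in x\GG w$; for any $k\in G$, pick $d\in\GG_x$ with $\psi(d)=\psi(a)\inv k\,\psi(b)\inv$, and note that $adb\in v\GG w$ satisfies $\psi(adb)=k$.

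Next I would bring in the tree conditions. For (ii)$\Rightarrow$(v): fix $x\in\GG^0$; for each $y\in\GG^0$ choose $b_y\in y\GG x$ by connectedness, then use (ii) to choose $d_y\in\GG_x$ with $\psi(d_y)=\psi(b_y)\inv$, and set $t_y=b_yd_y$, with $t_x=x$; then $T=\{t_y\}_{y\in\GG^0}$ is a maximal tree rooted at $x$ with $T\subset\ker\psi$. Clearly (v)$\Rightarrow$(iv). For (iv)$\Rightarrow$(iii): if $T=\{t_y\}$ is a maximal tree rooted at $x$ with $T\subset\ker\psi$, then for every $a\in z\GG y$ the element $t_z\inv a\,t_y$ lies in $\GG_x$ and $\psi(t_z\inv a\,t_y)=\psi(t_z)\inv\psi(a)\psi(t_y)=\psi(a)$; hence $\psi(\GG)\subset\psi(\GG_x)$, and since $\psi(\GG_x)$ is a subgroup of $G$ and $\psi$ is nondegenerate, $\psi(\GG_x)=G$. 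Together with the first paragraph this gives (i)$\iff$(ii)$\iff$(iii)$\iff$(iv)$\iff$(v).

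Finally I would dispatch (vi),(vii) using the Lemma above that matches retractions $\eta\:\GG\to\GG_x$ with maximal trees rooted at $x$, the retraction associated to $T=\{t_y\}$ being $\eta(a)=t_z\inv a\,t_y$ for $a\in z\GG y$. For (v)$\Rightarrow$(vi): given $x$, take the tree $T\subset\ker\psi$ rooted at $x$ supplied by (v) and its retraction $\eta$; since $\psi(t_y)=e$ for all $y$, the computation above shows $\psi(\eta(a))=\psi(a)$, so $\psi=(\psi|_{\GG_x})\circ\eta$ and $\psi$ factors through the retraction $\eta$. Then (vi)$\Rightarrow$(vii) is trivial, and for (vii)$\Rightarrow$(iv): if $\psi=\rho\circ\eta$ for a retraction $\eta\:\GG\to\GG_x$, then $\rho=\psi|_{\GG_x}$ (since $\eta$ fixes $\GG_x$ pointwise), and writing $T=\{t_y\}$ for the maximal tree rooted at $x$ associated to $\eta$ we get $\eta(t_y)=x$, hence $\psi(t_y)=\rho(x)=e$, so $T\subset\ker\psi$ and (iv) holds.

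The one genuinely substantive step is (iv)$\Rightarrow$(iii): it is the only place where nondegeneracy of $\psi$ is used, and the idea --- conjugating an arbitrary $a\in\GG$ by elements of a tree lying in $\ker\psi$ so as to land inside the isotropy group $\GG_x$ without altering the value of $\psi$, and then invoking that $\psi(\GG)$ generates $G$ --- is the crux. The remaining implications are routine once one is careful about the source/range conventions for the skew product and about the dictionary between maximal trees and retractions.
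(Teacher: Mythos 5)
Your proof is correct and follows essentially the same route as the paper: the same cycle of implications driven by the dictionary between maximal trees in $\ker\psi$ and retractions onto $\GG_x$, with nondegeneracy entering exactly once, via the observation that $\psi(\GG)\subset\psi(\GG_x)$ forces $\psi(\GG_x)=G$ because the latter is a subgroup. The only (harmless) difference is that you handle condition (i) by computing directly that connectedness of $\GG\times_\psi G$ means $\psi(v\GG w)=G$ for all $v,w$, whereas the paper routes this through transitivity of the associated cocycle action $\GG\act(\GG^0\times G)$ and the covering-theory results of \secref{sec:cover}.
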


Note that when we say ``$\psi$ factors through a retraction $\eta\:\GG\to \GG_x$'' we mean that there is a group homomorphism $\phi\:\GG_x\to G$ such that $\psi=\phi\circ\eta$,
but in fact this is equivalent to the simpler property $\psi=\psi\circ\eta$ since $\eta$ is a retraction.

\begin{proof}
Assume (i).
Then the associated action $\GG\act (\GG^0\times G)$ is transitive.
Pick $x\in\GG^0$.
Then the restricted action $\GG_x\act (\{x\}\times G)$ is transitive,
which implies that
\[
G=\psi(\GG_x)e=\psi(\GG_x).
\]
Thus  (ii) holds. Of course  (ii) trivially implies  (iii).

Assume  (iii),
and choose $x\in\GG^0$ such that $\psi(\GG_x)=G$.
We will show how to define a maximal tree $T=\{t_y\}_{y\in\GG^0}$ rooted at $x$
and contained in $\ker\psi$.
Of course it suffices to consider $y\ne x$.
Since
$\GG$ is connected we can choose $a\in y\GG x$,
and then by assumption there exists $b\in \GG_x$ such that $\psi(a)=\psi(b)$,
and then $ab\inv\in y\GG x$ and $\psi(ab\inv)=e$,
so we can take $t_y=ab\inv$.
Thus  (iv) holds.

Now assume  (iv),
and choose a maximal tree $T=\{t_y\}_{y\in\GG^0}$ rooted at a vertex $z$ and contained in $\ker\psi$.
Then for any $x\in\GG^0$, the set $S=\{t_yt_x\inv\}_{y\in\GG^0}$ is a maximal tree rooted at $x$
contained in $\ker\psi$.

Assume  (v), and let $x\in\GG^0$.
Choose a maximal tree $T=\{t_y\}_{y\in\GG^0}$ rooted at $x$ and contained in $\ker\psi$.
Let $\eta\:\GG\to \GG_x$ be the associated retraction.
Then for $a\in z\GG x$ we have
\[
\psi(a)=\psi(t_z)\psi(\eta(a))\psi(t_y)=\psi\circ\eta(a).
\]
Thus  (vi) holds, and of course  (vi) trivially implies (vii).

Finally, assume (vii),
so that $\psi$ factors through a retraction $\eta\:\GG\to\GG_x$ with associated maximal tree $T=\{t_y\}_{y\in\GG^0}$ contained in $\ker\psi$.
Since $\psi$ is nondegenerate,
$G$ is generated as a group by the image $\psi(\GG)=\psi(\GG_x)$,
so in fact $G=\psi(\GG_x)$ because $\psi|_{\GG_x}$ is a group homomorphism.
But then 
\[
G=\psi(\GG)e,
\]
so the associated action $\GG\act (\GG^0\times G)$ is transitive,
and hence the skew product $\GG\times_\psi G$ is connected.
\end{proof}

\begin{ex}
Note that the image of a groupoid $\GG$ under a cocycle might not be a subgroup.
For example,
let $\GG=\{x,y,a,a\inv\}$ be the full equivalence relation on the set $\{x,y\}$,
with $s(a)=x$.
Then the map $\psi\:\GG\to\Z$ defined by
\[
\psi(a)=1\midtext{and}\psi(a\inv)=-1
\]
(and of course $\psi(x)=\psi(y)=0$) is a cocycle
whose image is the subset $\{0,1,-1\}$ of $\Z$.
\end{ex}

\begin{ex}\label{use universal group}
By \thmref{connected equivalent}, if we are interested in connected skew products, we must assume that $\GG$ is connected and the cocycle $\psi\:\GG\to G$ is surjective.
However, this is \emph{not} enough.
Consider the connected groupoid $\GG=\{0,1\}^2\times \Z$,
where $\{0,1\}^2$ denotes the full equivalence relation on $\{0,1\}$.
For this discussion it will be convenient to instead describe $\GG$ as follows:
let $\GG^0=\{x,y\}$, let
$\GG_x=\<k\>$ be an infinite cyclic group with generator $k$,
and choose $a\in y\GG x$.

Then we can take the universal group $(U,j)$ of $\GG$ to be given by
\[
U=\F_1*\GG_x=\F_2=\<b,c\>,
\]
with
\[
j(a)=b
\midtext{and}
j(k)=c.
\]

Let $G$ be the infinite dihedral group
\[
\<\theta,d:\theta^2=1,\theta d\theta=d\inv\>.
\]
We define a cocycle $\psi\:\GG\to G$ by giving the associated homomorphism
$\psi'\:\F_2\to G$,
and by freeness it suffices to define $\psi'$ on the generators $b,c$:
\[
\psi'(b)=\theta,\quad \psi'(c)=d.
\]
Then the image of the cocycle $\psi=\psi'\circ j$ contains
\[
\{d^n:n\in\Z\}\cup \{\theta d^n:n\in\Z\}=G,
\]
so $\psi$ is surjective.
On the other hand, $\psi(\GG_x)=\<d\>\ne G$,
so by \thmref{connected equivalent}
the skew-product groupoid
$\GG\times_\psi G$ is not connected.
\end{ex}
\begin{rem}
This example illustrates how the universal group of a category may provide valuable information. 
\end{rem}

\begin{rem}
In \thmref{connected equivalent}, the hypothesis that the cocycle $\psi$ be nondegenerate is necessary for the implication (iv) implies (i). Indeed, if we start with any connected skew product and then just enlarge the target group $G$, (iv) (and hence (v)--(vii) also) will still hold but (i) will not.
Of the properties (iv)--(vii), (vii) seems to us to be the most interesting.
\end{rem}

We generalize \thmref{connected equivalent} to connected small categories:

\begin{cor}\label{connected equivalent category}
Let $\CC$ be a connected small category and $\eta\:\CC\to G$ a nondegenerate cocycle.
Let $\psi\:\GG(\CC)\to G$ denote the associated cocycle. The following are equivalent:
\begin{enumerate}
\item[(i)]
$\CC\times_\eta G$ is connected;

\item[(ii)]
the cocycle $\psi$
maps $\pi(\CC,x)$ onto $G$ for every $x\in \CC^0$;

\item[(iii)]
the cocycle $\psi$ maps $\pi(\CC,x)$ onto $G$ for some $x\in \CC^0$.
\end{enumerate}
\end{cor}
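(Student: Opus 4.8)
The plan is to reduce the corollary to \thmref{connected equivalent} by identifying the fundamental groupoid of the skew product $\skpr$ with the skew-product groupoid $\GG(\CC)\times_\psi G$, so that connectedness of $\skpr$ is governed by exactly the same condition as connectedness of $\GG(\CC)\times_\psi G$.

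First I would dispose of the easy dictionary. Since $\CC$ is connected, so is $\GG(\CC)$. Since $\eta=\psi\circ i_\CC$ is nondegenerate, $\eta(\CC)=\psi(i_\CC(\CC))$ is a generating subset of $G$ contained in $\psi(\GG(\CC))$, so $\psi$ is a nondegenerate cocycle on the connected groupoid $\GG(\CC)$. Finally $\pi(\CC,x)=x\GG(\CC)x=\GG(\CC)_x$, so conditions (ii) and (iii) of the corollary are verbatim conditions (ii) and (iii) of \thmref{connected equivalent} for the pair $(\GG(\CC),\psi)$; in particular they are equivalent to one another and, by that theorem, to the statement that $\GG(\CC)\times_\psi G$ is connected.

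It remains to prove that $\skpr$ is connected if and only if $\GG(\CC)\times_\psi G$ is, and for this I would establish $\GG(\skpr)\cong\GG(\CC)\times_\psi G$. The coordinate projection $p\:\skpr\to\CC$, $(\alpha,g)\mapsto\alpha$, is a surjective covering: the fibre conditions follow at once from the formulas $s(\alpha,g)=(s(\alpha),g)$ and $r(\alpha,g)=(r(\alpha),\eta(\alpha)g)$ of \lemref{skew finitely aligned}. By \propref{gpd cover prop}, $(p_*,s)$ is an isomorphism of $\GG(\skpr)$ onto the transformation groupoid $\GG(\CC)*(\skpr)^0=\GG(\CC)*(\CC^0\times G)$, for the groupoid action $\GG(\CC)\act\CC^0\times G$ that \lemref{actions} associates to the covering $p$. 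On a generator $i_\CC(\alpha)$ this action reads $\alpha\cdot(s(\alpha),g)=(r(\alpha),\eta(\alpha)g)$ (the unique lift of $\alpha$ with source $(s(\alpha),g)$ is $(\alpha,g)$, whose range is $(r(\alpha),\eta(\alpha)g)$), so by uniqueness in \lemref{actions} the extended groupoid action is the cocycle action $a\cdot(s(a),g)=(r(a),\psi(a)g)$ on $\GG(\CC)^0\times_\psi G$. Finally, unwinding \propref{tfm cat}, the map $(a,(s(a),g))\mapsto(a,g)$ is a groupoid isomorphism of $\GG(\CC)*(\GG(\CC)^0\times_\psi G)$ onto the skew-product groupoid $\GG(\CC)\times_\psi G$. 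Composing the two isomorphisms gives $\GG(\skpr)\cong\GG(\CC)\times_\psi G$, so $\skpr$ is connected iff $\GG(\skpr)$ is iff $\GG(\CC)\times_\psi G$ is, which, as observed above, is exactly condition (ii) and condition (iii).

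The main obstacle is the bookkeeping in the previous paragraph: checking that the covering action attached to $p$ really is the cocycle action of $\psi$, and matching the transformation groupoid $\GG(\CC)*(\GG(\CC)^0\times_\psi G)$ with $\GG(\CC)\times_\psi G$. Both are routine once the definitions of \propref{tfm cat}, \lemref{actions} and \propref{gpd cover prop} are lined up. As an alternative route one could bypass the fundamental groupoid entirely and verify directly that two vertices $(v,g)$ and $(w,h)$ lie in the same component of $\skpr$ precisely when there is $a\in v\GG(\CC)w$ with $\psi(a)=gh\inv$, then argue connectedness of $\skpr$ by the same case analysis as in the proof of \thmref{connected equivalent}; but funneling everything through $\GG(\CC)\times_\psi G$ keeps the proof short.
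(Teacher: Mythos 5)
Your proposal is correct and follows essentially the same route as the paper: the paper's proof likewise applies \propref{gpd cover prop} with $\DD=\CC\times_\eta G$ to get $\GG(\CC\times_\eta G)\simeq\GG(\CC)*(\CC^0\times G)$, identifies this transformation groupoid with $\GG(\CC)\times_\psi G$ via $\bigl(a,(s(a),g)\bigr)\mapsto(a,g)$, and then invokes \thmref{connected equivalent}. Your write-up simply makes explicit the bookkeeping (the covering property of the projection, the identification of the induced action with the cocycle action of $\psi$, and the nondegeneracy of $\psi$) that the paper leaves as ``quite easy to check.''
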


\begin{proof}
This follows 
from \thmref{connected equivalent}, because by \propref{gpd cover prop}
with $\DD=\CC\times_\eta G$
we have
\[
\GG(\CC\times_\eta G)\simeq \GG(\CC)*(\CC\times_\eta G)^0=\GG(\CC)*(\CC^0\times G),
\]
and it is quite easy to check that we have an isomorphism
\begin{align*}
\GG(\CC)*(\CC^0\times G)&\simeq \GG(\CC)\times_\psi G
\\
\bigl(a,(s(a),g)\bigr)&\mapsto (a,g),
\\ 
\end{align*}
and $\CC\times_\eta G$ is connected if and only if its fundamental groupoid is.
\end{proof}

In the above corollary, the equivalence (i)\iff (iii) is a version of \cite[Corollary~5.6]{pask-rho} (which deals with the special case where $\CC$ is the path category of a directed graph).

%----------------------------

%\bibliographystyle{amsalpha}
%\bibliography{cstar}

%\newcommand{\etalchar}[1]{$^{#1}$}
%\providecommand{\bysame}{\leavevmode\hbox to3em{\hrulefill}\thinspace}
%\providecommand{\MR}{\relax\ifhmode\unskip\space\fi MR }
%% \MRhref is called by the amsart/book/proc definition of \MR.
%\providecommand{\MRhref}[2]{%
%  \href{http://www.ams.org/mathscinet-getitem?mr=#1}{#2}
%}
%\providecommand{\href}[2]{#2}

\end{document}